\newcommand\id{\operatorname{id}}
\newcommand\diag{\operatorname{diag}}
\newcommand\N{{\mathbb N}}
\newcommand\R{{\mathbb R}}
\newcommand\K{\mathcal{K}}
\renewcommand\L{\mathcal{L}}
\newcommand\KL{\mathcal{KL}}
\newcommand\Ki{\mathcal K_\infty}
\providecommand{\dabs}[1]{|#1|}
\providecommand{\dabsu}[1]{\left\| #1 \right\|_\infty}	
\providecommand{\dabsinf}[1]{\left\| #1 \right\|_\infty}	
\providecommand{\abs}[1]{| #1 |}				
\newtheorem{theorem}{Theorem}[section]
\newtheorem{corollary}[theorem]{Corollary}
\newtheorem{lemma}[theorem]{Lemma}
\newtheorem{proposition}[theorem]{Proposition}
\newtheorem{definition}[theorem]{Definition}
\newtheorem{remark}[theorem]{Remark}
\newtheorem{assumption}[theorem]{Assumption}
\newtheorem{example}[theorem]{Example}
\newtheorem{proc}[theorem]{Procedure}
\newenvironment{proof}
{{\par\parskip1.0ex 
\textbf{Proof.}} \setlength{\leftskip}{0.0em}}
{$\hfill \square$\parskip1.0ex 
\par \vspace{0.5em}}
\title{Relaxed ISS Small-Gain Theorems for Discrete-Time Systems}
\author{Roman Geiselhart\thanks{R. Geiselhart is with the Faculty of Computer Science and Mathematics, University of Passau, Innstrasse 33, 94032 Passau, Germany; Email: roman.geiselhart@uni-passau.de.}
\and
Fabian R. Wirth\thanks{F. Wirth is with 
the Faculty of Computer Science and Mathematics, University of Passau, Innstrasse 33, 94032 Passau, Germany; Email: fabian.(lastname)@uni-passau.de.}
}
\begin{document}
\maketitle

\begin{abstract}
In this paper ISS small-gain theorems for discrete-time systems are stated, which do not require input-to-state stability (ISS) of each subsystem. This approach weakens conservatism in ISS small-gain theory, 
 and for the class of exponentially ISS systems we are able to prove that the proposed relaxed small-gain theorems are non-conservative in a sense to be made precise. 
 The proofs of the small-gain theorems rely on the construction of a dissipative finite-step ISS Lyapunov function which is introduced in this work. 
Furthermore, dissipative finite-step ISS Lyapunov functions, as relaxations of ISS Lyapunov functions, are shown to be sufficient and necessary to conclude ISS of the overall system. 
\end{abstract}

\begingroup
\small \textbf{Keywords:} 
input-to-state stability, Lyapunov methods,  small-gain conditions, discrete-time nonlinear systems, large-scale interconnections
\endgroup

\section{Introduction}\label{sec:intro}

Large-scale systems form an important class of systems with various applications such as formation control, logistics, consensus dynamics,
networked control systems to name a few.
While stability conditions for such large-scale systems have already been studied in the 1970s and early 1980s cf. \cite{moylan1978stability,Sil79,Vid81} based on linear gains and Lyapunov techniques, nonlinear approaches are more recent. 
An efficient tool in the analysis of large-scale nonlinear control systems is the concept of input-to-state stability (ISS) as introduced in \cite{Son89}, and the introduction of ISS Lyapunov functions in \cite{Son89,SW95}. 
The concept of ISS was originally formulated for continuous-time systems, but has also been established for discrete-time control systems (\cite{KT94,JW01-Automatica}) of the form $x(k+1) = G(x(k),u(k))$, as considered in this work. 

As ISS Lyapunov functions are assumed to decrease at each step (while neglecting the input) the search for ISS Lyapunov functions is a hard task, in general.
To relax this assumption we introduce the concept of dissipative finite-step ISS Lyapunov functions, where the function is assumed to decrease after a finite time, rather than at each time step.
This approach originates from \cite{AP98} and was recently used in \cite{LDA13,GLW13-SG,GGLW13-SCL} for the stability analysis of discrete-time systems without inputs.

We provide, in a first step, an equivalent characterization of input-to-state stability in terms of the existence of a dissipative finite-step ISS Lyapunov function. 
The sufficiency part follows the lines of \cite[Lemma~3.5]{JW01-Automatica}, which shows that the existence of a continuous (dissipative) ISS Lyapunov function implies ISS of the system. 
Necessity is shown using a converse ISS Lyapunov theorem \cite{JW01-Automatica,LA05-ISS-discrete}. Moreover, for the case of exponential ISS systems, we show that any norm is a dissipative finite-step ISS Lyapunov function.

In this paper, we follow the nomenclature of \cite{Sil79} and say that 
a large-scale system is defined through the interconnection of a number of
smaller components. For such systems there exist small-gain type
conditions guaranteeing the ISS property for the interconnected system. 
Whereas small-gain theorems have a long history, the first ISS versions in a trajectory-based formulation and in a Lyapunov-based formulation are given in \cite{JPT94} and \cite{JMW96}, respectively. In both cases the results are stated for systems consisting of two subsystems. These results have been extended to large-scale interconnections in  \cite{DRW07} and \cite{DRW09}. 

The first ISS small-gain theorems for discrete-time systems were presented in~\cite{JW01-Automatica}, which parallel the results of \cite{JPT94} and \cite{JMW96} for continuous-time systems.
For interconnections consisting of more than two subsystems, small-gain theorems are presented in \cite{JLW08-CCC} and in \cite{DRW07}, whereas in \cite{JLW08-CCC}  ISS was defined in a maximum formulation and in \cite{DRW07} the results are given in a summation formulation. 
Further extensions to the formulation via maximization or summation are ISS formulations via \emph{monotone aggregation functions}. 
In this formulation, the ISS small-gain results are shown to hold in a
more general form, see~\cite{Diss-Rueffer}. 
In \cite{DRW09} the authors present an ISS small-gain theorem in a
Lyapunov-based formulation that allows to construct an overall ISS
Lyapunov function. This paper also discusses various examples showing that
depending on the system class different formulations of the ISS property
are natural.
In \cite{LHJ12} a discrete-time version in a maximum formulation is shown
and ISS Lyapunov functions for the overall system are constructed.

The classical idea of ISS small-gain theory is that the interconnection of ISS subsystems is ISS if the influence of the subsystems on each other is small enough. 
This is a sufficient criterion, but the requirement of all systems being ISS is not necessary, even for linear systems as we recall in Section~\ref{sec:sgt}. 
Hence, classical small-gain theorems come with a certain conservatism. 
The main purpose of this work to reduce conservatism in ISS small-gain theory. 
This is achieved in the sense that we will not require each subsystem to be ISS. Indeed, each subsystem may be unstable when decoupled from the other subsystems. 
This is a crucial difference to classical ISS small-gain results, where it is implicitly assumed that the other subsystems act as perturbations. Here the subsystems may have a stabilizing effect on each other.

The requirement imposed is that Lyapunov-type functions for the subsystems have to decrease after a finite time. 
This relaxation also includes previous ISS small-gain theorems and applies to a larger class of interconnected systems. 
Furthermore, if the overall system is expISS, i.e., solutions of the unperturbed system are decaying exponentially, the ISS small-gain theorems are indeed non-conservative, i.e., they are also necessary.

The proof of the ISS small-gain theorems presented give further insight in the systems behavior. 
For the sufficiency part a dissipative finite-step ISS Lyapunov function is constructed from the Lyapunov-type functions and the gain functions involved. 
On the other hand, for expISS systems suitable Lyapunov-type and gain functions are derived. 
This in particular implies a constructive methodology for applications.
However, if the overall system is ISS but not expISS the application of the results is challenging to implement.

To illustrate this methodology we consider a nonlinear discrete-time system consisting of two subsystems that are not both ISS. 
To the best of the authors' knowledge previous ISS small-gain theorems do not apply in this situation.
Following the proposed methodology we derive a dissipative finite-step ISS Lyapunov function to show ISS of the overall system.

The outline of this work is as follows. 
The preliminaries are given in Section~\ref{sec:prel}, followed by the problem statement including the definition of a  dissipative finite-step ISS Lyapunov function,  in Section~\ref{sec:prob}. 
The sufficiency of the existence of dissipative finite-step ISS Lyapunov functions to conclude ISS is stated in Section~\ref{sec:ftISS}. 
In this section we also state a particular converse finite-step ISS Lyapunov theorem that shows that for any expISS system any norm is a dissipative finite-step ISS Lyapunov function.
Section~\ref{sec:sgt} contains the main results. 
First, in Section~\ref{subsec:dissSGT} ISS small-gain theorems are presented that do not require each system to admit an ISS Lyapunov function. 
In Section~\ref{subsec:converseSGT}, we show the non-conservativeness of the relaxed ISS small-gain theorems for the class of expISS systems, by stating a converse of the presented ISS small-gain theorems.
We close the paper in Section~\ref{sec:example} with an illustrative example.

\section{Preliminaries}
\label{sec:prel}

\subsection{Notation and conventions}
\label{subsec:Notation}

By $\N$ we denote the natural numbers and we assume $0 \in \N$.
Let $\R$ denote the field of real numbers, $\R_+$ the set of nonnegative
real numbers and $\R^n$  the vector space of real column vectors of length $n$; further  $\R^n_+:=(\R_+)^n$ denotes the positive orthant.
For any vector $v \in \R^n$ we denote by $[v]_i$ its $i$th component. Then $\R^n$ induces a partial order for vectors $v,w \in \R^n$ as follows:
We define $v \geq w : \iff [v]_i \geq [w]_i$ and $v>w:\iff [v]_i > [w]_i$, each for all $i\in\{1, \ldots, n\}$. Further; $v \not \geq w :\iff $ there exists an index $i \in \{1, \ldots, n\}$ such that $[v]_i < [w]_i$.

By $\dabs{\cdot}$ we denote an arbitrary fixed monotonic norm on $\R^n$, i.e., if $v,w \in \R^n_+$ with $w\geq v$ then $\dabs{w} \geq \dabs{v}$.
For $x_i \in \R^{n_i}$, $i \in \{1, \ldots, N\}$ let $(x_1,\ldots,x_N):= (x_1^\top, \ldots, x_N^\top)^\top$.
For a sequence $\{u(k)\}_{k \in \N}$ with $u(k) \in \R^m$,  
we define $\dabsu{u}:= \sup_{k \in \N} \{ \dabs{u(k)} \} \in \R_+ \cup \{\infty\}$. If $u(\cdot)$ is bounded, i.e., $\dabsu{u}<\infty$, then $u(\cdot) \in l^\infty (\R^m)$.

We will make use of the following consequence of the equivalence of norms in~$\R^n$:
For any norm $\dabs{\cdot}$ on $\R^n$ there exists a constant $\kappa \geq 1$ such that for all $x=(x_1, \ldots, x_N) \in \R^n$ with $x_i \in \R^{n_i}$ and $n= \sum_{i=1}^N n_i$, it holds
\begin{equation}\label{eq:norm} 
\dabs{x} \leq \kappa \max_{i \in \{1, \ldots, N\}} \dabs{x_i},
\end{equation}
where $\dabs{x_i} := \dabs{(0,\ldots, 0, x_i,0 \ldots, 0)}$.
In particular, if $\dabs{\cdot}$ is a $p$-norm then $\kappa = N^{1/p}$ is the smallest constant satisfying~\eqref{eq:norm}.

\subsection{Comparison functions and induced monotone maps}
\label{subsec:Comparison}

It has become standard to use \emph{comparison functions} to state stability properties of nonlinear systems. 
Here we use functions of class $\K,\Ki,\L,\KL$. For a definition see \cite{Kellet-comparison}.

A $\Ki$-function $\alpha$ is called \emph{sub-additive}, if for any $s_1,s_2 \in \R_+$ it holds 
\begin{equation*}\alpha(s_1+s_2) \leq \alpha(s_1)+\alpha(s_2). \end{equation*}

In the following lemma we collect some facts about $\Ki$-functions, which are useful not only in the particular proofs of this work. Note that the symbol $\circ$ denotes the composition of two functions.

\begin{lemma}
\begin{enumerate}
\item \cite[Prop.~3]{GW14KiEquations} The pair $(\Ki,\circ)$ is a non-commutative group. In particular, for $\alpha,\alpha_1, \alpha_2 \in \Ki$ the inverse $\alpha^{-1} \in \Ki$ exists, and $\alpha_1 \circ \alpha_2 \in \Ki$.
\item For $\alpha_1, \alpha_2, \alpha_3 \in \Ki$ we have
\begin{equation*}
 \alpha_1 (\max \{ \alpha_2, \alpha_3 \}) =  \max \{ \alpha_1 \circ \alpha_2, \alpha_1 \circ \alpha_3 \}.
\end{equation*}
\end{enumerate}
\end{lemma}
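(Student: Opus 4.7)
The plan is to handle the two parts separately. Part (i) is explicitly attributed to \cite[Prop.~3]{GW14KiEquations}, so in the final write-up I would simply cite that reference. The underlying content is that every $\Ki$-function is a homeomorphism $\R_+ \to \R_+$ (being continuous and strictly increasing with $\alpha(0)=0$ and unbounded), hence invertible with inverse again in $\Ki$; composition preserves each of these properties, the identity $\id \in \Ki$ is neutral, and associativity of composition is automatic. Non-commutativity is witnessed by any two non-commuting examples, say $\alpha(s)=s^2$ and $\beta(s)=2s$.

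For part (ii) the plan is to reduce the equality of functions to a pointwise identity. Restoring the suppressed argument, the claim reads
\begin{equation*}
\alpha_1\bigl(\max\{\alpha_2(s),\alpha_3(s)\}\bigr) = \max\bigl\{\alpha_1(\alpha_2(s)),\,\alpha_1(\alpha_3(s))\bigr\} \qquad \text{for every } s \in \R_+,
\end{equation*}
which is a special case of the elementary fact that any monotonically increasing map $\alpha_1:\R_+ \to \R_+$ commutes with $\max$, i.e.\ $\alpha_1(\max\{a,b\}) = \max\{\alpha_1(a),\alpha_1(b)\}$ for all $a,b \in \R_+$. I would prove the latter by a one-line case distinction: if $a \geq b$, then $\max\{a,b\}=a$ and $\alpha_1(a) \geq \alpha_1(b)$ by monotonicity, so both sides equal $\alpha_1(a)$; the case $b \geq a$ is symmetric. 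Substituting $a = \alpha_2(s)$, $b = \alpha_3(s)$ finishes the argument for each $s$.

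The only genuine subtlety, and it is purely notational, is that the equality in the statement must be read as an identity of functions of the argument $s$ rather than of bare symbols $\alpha_2, \alpha_3$; once this is spelled out, the mathematical content of part (ii) is a single invocation of monotonicity. I do not anticipate any substantive obstacle in either part, and in the final write-up I would keep the argument to two or three lines.
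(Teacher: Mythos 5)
Your proposal is correct. The paper itself gives no proof for this lemma: part~(i) is stated with the citation to \cite[Prop.~3]{GW14KiEquations}, exactly as you would do, and part~(ii) is left to the reader as an elementary consequence of the monotonicity of $\alpha_1$. Your argument for~(ii) — reading the identity pointwise and invoking the one-line case distinction showing that any increasing map commutes with $\max$ — is the standard justification and is precisely what the paper implicitly relies on.
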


For any two functions $\alpha_1, \alpha_2: \R_+ \rightarrow \R_+$ we write $\alpha_1 < \alpha_2$ (resp. $\alpha_1 \leq \alpha_2$) if $\alpha_1(s) < \alpha_2(s)$ (resp. $\alpha_1(s) \leq \alpha_2(s)$) for all $s>0$. A continuous function $\eta: \R_+ \rightarrow \R_+$ is called \emph{positive definite}, if $\eta(0)=0$ and $\eta(s)>0$ for all $s>0$. By $\id$ we denote the identity function $\id(s)=s$ for all $s \in \R_+$, and by $0:\R_+ \mapsto 0$ we denote the zero function.

Given $\gamma_{ij} \in \Ki \cup \{0\}$ for $i,j\in\{1, \ldots, n\}$, we define the map $\Gamma_\oplus : \R^n_+ \rightarrow \R^n_+$ by
\begin{equation}\label{eq:gainmap}
 \Gamma_\oplus (s) := \left ( \begin{array}{c} \max \left\{ \gamma_{11}([s]_1), \ldots, \gamma_{1n}([s]_n) \right\} \\ \vdots \\ \max \left\{ \gamma_{n1}([s]_1), \ldots, \gamma_{nn}([s]_n) \right\} \end{array}\right).\end{equation}
For the $k$th iteration of this map we write $\Gamma_\oplus^k$. Note that $\Gamma_\oplus$ is \emph{monotone}, i.e.,  $\Gamma_\oplus(s^1) \leq \Gamma_\oplus(s^2)$ for all $s^1,s^2 \in \R^n_{+}$ with $s^1 \leq s^2$, also $\Gamma_\oplus(0)=0$.

\subsection{Small-gain conditions} 
\label{subsec:SGC}

Consider the map $\Gamma_\oplus$ from \eqref{eq:gainmap}, let $\delta_i \in \Ki, D_i=(\id+\delta_i)$, $i\in \{1,\ldots, n\}$, and define the diagonal operator $D: \R^{n}_+ \rightarrow \R^{n}_+$ by 
\begin{equation}\label{eq:DiagOp}
 D(s) := \left( D_1([s]_1), \ldots, D_{n}([s]_{n}) \right)^\top.
\end{equation}

\begin{definition}\label{Def:SGC} The map $\Gamma_\oplus$ 
from \eqref{eq:gainmap} satisfies the \emph{small-gain condition} if 
 \begin{equation}\label{eq:sgc}
  \Gamma_\oplus(s) \not \geq s \quad \text{ for all }s \in \R^n_+\backslash\{0\}.
 \end{equation}
The map $\Gamma_\oplus$ satisfies the \emph{strong small-gain condition} if there exists a diagonal operator $D$ as in \eqref{eq:DiagOp} such that 
\begin{equation}\label{eq:ssgc}
 (D\circ \Gamma_\oplus)(s) \not \geq s \quad \text{ for all }s \in \R^n_+\backslash\{0\}.
\end{equation}
\end{definition}

The condition $\Gamma_\oplus(s) \not \geq s$ for all $s\in \R^n_+\backslash \{0\}$, or for short $\Gamma_\oplus \not \geq \id$, means that for any $s>0$ the the image $\Gamma_\oplus(s)$ is decreasing in at least one component $i^* \in \{1,\ldots, n\}$, i.e., $[\Gamma_\oplus(s)]_{i^*}<[s]_{i^*}$. 
Furthermore, we can assume that all functions $\delta_i\in\Ki$ of the diagonal operator $D$ are identical, by setting $\delta(s):=\min_i \delta_i(s)$. 
We will then write $D=\diag(\id + \delta)$. 
For any factorization $D=D_{II} \circ D_{I}$ with diagonal operators $D_I,D_{II}:\R^n_+ \to \R^n_+$ as defined above, it holds that $D \circ \Gamma_\oplus \not \geq \id \iff D_{I}\circ \Gamma_\oplus \circ D_{II} \not \geq \id$, and, in particular, $D \circ \Gamma_\oplus \not \geq \id \iff \Gamma_\oplus \circ D \not \geq \id$.

As shown in \cite[Theorem~5.2]{DRW09} the (strong) small-gain condition~\eqref{eq:sgc} (resp.~\eqref{eq:ssgc}) implies the existence of a so-called $\Omega$-path $\tilde \sigma$ with respect to $\Gamma_\oplus$ (resp. $D\circ \Gamma_\oplus$) (\cite[Definition~5.1]{DRW09}). 
The essential property of $\Omega$-paths used in this work is that $\tilde \sigma = (\tilde \sigma_1, \ldots, \tilde \sigma_n) \in \Ki^n$, and satisfies $\Gamma_\oplus(\tilde \sigma(r))< \tilde \sigma(r)$  (resp. $(D\circ \Gamma_\oplus)(\tilde \sigma(r))< \tilde \sigma(r)$) for all $r>0$.
The numerical construction of $\Omega$-paths can be performed using the algorithm proposed in~\cite{GW12-MCSS}. 
A simple calculation shows that if $\tilde \sigma$ is an $\Omega$-path with respect to $D \circ \Gamma_\oplus$ if and only if $D_{II}^{-1}\circ \tilde \sigma$ is an $\Omega$-path with respect to $D_{I}\circ \Gamma_\oplus \circ D_{II}$, where $D=D_{II}\circ D_{I}$ is split as above.

\begin{remark}
Condition~\eqref{eq:sgc} originates from \cite{DRW07} and is in fact
equivalent to the equilibrium $s^* = 0$ of the system $s(k+1)=
\Gamma_\oplus(s(k))$ being globally asymptotically stable
(GAS\footnote{see Remark~\ref{rem:0GAS} for a definition}), see
\cite[Theorem~6.4]{Ruf09Monotone}.
The idea comes from the linear case with $\Gamma \in \R^{n \times n}_+$, where the following is equivalent (see \cite[Lemma~1.1]{Ruf09Monotone} and \cite[Section~4.5]{DRW07}):
\begin{enumerate}
\item for the spectral radius it holds $\rho(\Gamma)<1$;
\item $\Gamma s \not \geq s$ for all $s \in \R^n_+ \backslash \{0\}$;
\item $\Gamma^k \to 0$ for $k \to \infty$;
\item the origin of the system $s(k+1) = \Gamma(s(k))$ is GAS.
\end{enumerate}
\end{remark}

For the map $\Gamma_\oplus$ defined in \eqref{eq:gainmap} we have the following equivalent condition, which gives the possibility to check the small-gain condition (see \cite[Theorem~6.4]{Ruf09Monotone}).

\begin{proposition}\label{prop:cycle}
The map $\Gamma_\oplus: \R^n_+ \rightarrow \R^n_+$ defined in \eqref{eq:gainmap} satisfies the small-gain condition \eqref{eq:sgc} if and only if all cycles in the corresponding graph of $\Gamma_\oplus$ are weakly contracting, i.e., $\gamma_{i_0i_1} \circ \gamma_{i_1i_2} \circ \ldots \circ \gamma_{i_ki_0} < \id$ for $k \in \N$, $i_j \neq i_l$ for $j \neq l$.
\end{proposition}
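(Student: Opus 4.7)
The plan is to prove both directions by contrapositive, exploiting the fact that the maximum structure of $\Gamma_\oplus$ lets one read off directed edges in the gain graph from coordinate-wise inequalities. The key device is, given $s \in \R^n_+$ with $\Gamma_\oplus(s) \geq s$, the selection function $i \mapsto j(i)$ that picks an index realizing the maximum $[\Gamma_\oplus(s)]_i = \max_j \gamma_{ij}([s]_j)$; iterating this selection on a finite index set forces a cycle.

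For the direction ``weakly contracting cycles $\Rightarrow$ small-gain condition'', I would argue contrapositively. Suppose there exists $s \in \R^n_+ \setminus \{0\}$ with $\Gamma_\oplus(s) \geq s$. For every index $i$ with $[s]_i > 0$, pick $j(i)$ such that $\gamma_{i,j(i)}([s]_{j(i)}) \geq [s]_i > 0$; in particular $[s]_{j(i)} > 0$, so $j$ maps the support of $s$ into itself. Starting from any $i_0$ in the support and iterating $j$ produces a sequence that must revisit an index within at most $n$ steps; discarding the preamble before the first repetition yields a simple cycle $i_0 \to i_1 \to \cdots \to i_k \to i_0$ with pairwise distinct indices. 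Composing the inequalities $\gamma_{i_j, i_{j+1}}([s]_{i_{j+1}}) \geq [s]_{i_j}$ around the loop and using monotonicity of each $\gamma_{ij}$ gives
\begin{equation*}
(\gamma_{i_0 i_1} \circ \gamma_{i_1 i_2} \circ \cdots \circ \gamma_{i_k i_0})([s]_{i_0}) \geq [s]_{i_0},
\end{equation*}
contradicting the weakly contracting property at $s^* = [s]_{i_0} > 0$.

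For the converse ``small-gain condition $\Rightarrow$ weakly contracting cycles'', I would again argue contrapositively and construct an explicit violating vector. Suppose there are distinct indices $i_0, \ldots, i_k$ and some $s^* > 0$ with $(\gamma_{i_0 i_1} \circ \cdots \circ \gamma_{i_k i_0})(s^*) \geq s^*$. Define $s \in \R^n_+$ by setting $[s]_{i_0} := s^*$, $[s]_{i_k} := \gamma_{i_k i_0}(s^*)$, and recursively $[s]_{i_j} := (\gamma_{i_j i_{j+1}} \circ \cdots \circ \gamma_{i_k i_0})(s^*)$ for $j = k-1, \ldots, 1$, with all other coordinates zero. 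Then by construction $\gamma_{i_j, i_{j+1}}([s]_{i_{j+1}}) = [s]_{i_j}$ along the cycle, and the cycle inequality gives $\gamma_{i_0 i_1}([s]_{i_1}) \geq [s]_{i_0}$. Hence $[\Gamma_\oplus(s)]_{i_j} \geq [s]_{i_j}$ for each cycle index, while for $i \notin \{i_0, \ldots, i_k\}$ we trivially have $[\Gamma_\oplus(s)]_i \geq 0 = [s]_i$. Thus $\Gamma_\oplus(s) \geq s$ with $s \neq 0$, contradicting \eqref{eq:sgc}.

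The main obstacle is the bookkeeping in the first direction, namely extracting a \emph{simple} cycle (distinct indices, as required by the proposition) from the orbit of the selection $j(\cdot)$. The standard trick of truncating at the first repetition settles this, and the remaining composition inequality then follows purely from monotonicity of each $\gamma_{ij} \in \Ki \cup \{0\}$ and the fact that strict positivity propagates along the cycle. No further hypothesis on the $\gamma_{ij}$ beyond monotonicity is needed for either direction.
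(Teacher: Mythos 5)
Your argument is correct. Note that the paper does not prove Proposition~\ref{prop:cycle} at all; it cites the equivalence from R\"uffer's work (\cite[Theorem~6.4]{Ruf09Monotone}), where it is embedded in a more general theory of monotone maps on the positive orthant. What you have supplied is a self-contained, purely combinatorial proof that uses only the max-structure of $\Gamma_\oplus$ and the fact that each $\gamma_{ij}\in\Ki\cup\{0\}$ is monotone and vanishes at~$0$. The two directions are sound: for one, the selection $i\mapsto j(i)$ of a maximizer does map the support of a violating $s$ into itself (since $\gamma_{i,j(i)}([s]_{j(i)})\geq [s]_i>0$ forces $[s]_{j(i)}>0$), and truncation of the iterated orbit at the first repeat yields a simple cycle along which the composed inequality $(\gamma_{i_0i_1}\circ\cdots\circ\gamma_{i_ki_0})([s]_{i_0})\geq[s]_{i_0}$ contradicts weak contraction. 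For the other, the backward-propagated vector you build saturates every cycle edge with equality except at $i_0$, where the failed contraction gives the $\geq$, and all off-cycle components are zero, so $\Gamma_\oplus(s)\geq s$ holds with $s\neq 0$. One point worth making explicit when you write this up: in the second direction, the contrapositive hypothesis $(\gamma_{i_0i_1}\circ\cdots\circ\gamma_{i_ki_0})(s^*)\geq s^*>0$ automatically rules out any $\gamma_{i_ji_{j+1}}=0$ along the cycle, so the recursive definition of $[s]_{i_j}$ never collapses the chain to zero prematurely. Overall, your elementary argument is a fine substitute for the citation and is arguably more transparent for this specific max-type $\Gamma_\oplus$ than the general monotone-systems machinery the paper points to.
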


Note that it is sufficient that all \emph{minimal} cycles of the graph of $\Gamma_\oplus$ are weakly contracting, which means that $i_j\neq i_l$ for all $j,l \in \{0, \ldots, k\}$. Thus, $k<n$.

\section{Problem statement}\label{sec:prob}

We consider discrete-time systems of the form
\begin{equation}\label{eq:oas} x(k+1) = G(x(k),u(k)), \qquad k \in \N. \end{equation}
Here  $u(k)\in \R^m$ denotes the input at time $k \in \N$. Note that an
input is a function $u: \N \rightarrow \R^m$. By $x(k, \xi, u(\cdot)) \in
\R^n$ we denote the solution of \eqref{eq:oas}
at time $k \in \N$, starting in $x(0)= \xi \in \R^n$ with input $u(\cdot)$. 

Throughout this work the map $G:\R^n \times \R^m \rightarrow \R^n$ satisfies the following standing assumption.

\begin{assumption}\label{ass:omega}
The function $G$ in \eqref{eq:oas} is \emph{globally $\K$-bounded}, 
i.e., there exist functions $\omega_1,\omega_2\in\K$ such that for all $\xi \in \R^n$ and $ \mu \in \R^m$ we have
\begin{equation}\label{eq:omega} \dabs{G(\xi,\mu)} \leq \omega_1(\dabs{\xi}) + \omega_2(\dabs{\mu}).
\end{equation} 
\end{assumption}

Assumption~\ref{ass:omega} implies continuity of $G$ in $(0,0)$, but it does not require the map $G$ to be continuous elsewhere (as assumed e.g. in \cite{JW01-Automatica,JW02,LHJ12}) or (locally) Lipschitz (as assumed e.g. in \cite{Agarwal,AP98}). For further remarks on Assumption~\ref{ass:omega} see Remark~\ref{rem:omega} and Appendix~\ref{subsec:appendixbounds}.

\begin{definition}\label{def:ISS}
We call system~\eqref{eq:oas} \emph{input-to-state stable} if there exist $\beta\in\KL$ and $\gamma\in \K$ such that for all initial states $\xi \in \R^n$, all inputs $u(\cdot) \in l^\infty (\R^m)$ and all $k \in \N$
\begin{equation}\label{eq:ISSestimate} \dabs{x(k,\xi, u(\cdot))} \leq  \beta(\dabs{\xi},k) + \gamma(\dabsu{u}). \end{equation}
If the $\KL$-function $\beta$ in \eqref{eq:ISSestimate} can be chosen as
\begin{equation}\label{eq:expISSestimate} \beta(r,t)=C\kappa^t r \end{equation}
with $C\geq 1$ and $\kappa \in [0,1)$, then system~\eqref{eq:oas} is called exponentially input-to-state stable (expISS).
\end{definition}

An alternative definition of ISS replaces the sum in \eqref{eq:ISSestimate} by the maximum. 
Indeed, both definitions are equivalent, and the equivalence even holds for more general definitions of ISS using \emph{monotone aggregation functions}, see \cite[Proposition~2.5]{GWmixed13}.

\begin{remark}\label{rem:omega} Since we are interested in checking the ISS property of system~\eqref{eq:oas}, it is clear that the existence of functions $\omega_1, \omega_2 \in \K$ satisfying \eqref{eq:omega} in Assumption~\ref{ass:omega} is no restriction, since every ISS system necessarily satisfies \eqref{eq:omega}. 
In particular, by~\eqref{eq:ISSestimate}, we have
\begin{equation*}
\dabs{G(\xi,\mu)} = \dabs{x(1,\xi,\mu)} \leq \beta(\dabs{\xi},1)+ \gamma(\dabs{\mu})
\end{equation*}
and we may choose $\omega_1(\cdot) = \beta(\cdot, 1)$ and  $\omega_2(\cdot) = \gamma(\cdot)$ to obtain~\eqref{eq:omega}. 
Moreover, for expISS systems we can take $\omega_1(s)=C\kappa s$
where $C \geq 1$ and $\kappa \in [0,1)$ stem from~\eqref{eq:expISSestimate}. 
In other words, any expISS system is globally $\K$-bounded with a linear function $\omega_1 \in \K$.
\end{remark}

The following lemma shows that by a suitable \emph{change of coordinates}, i.e., a homeomorphism $T:\R^n\rightarrow \R^n$ with $T(0)=0$ (see e.g. \cite{KD13-AuCC}),  we can always assume that $\omega_1 \in \K$ in \eqref{eq:omega} is linear.

\begin{lemma}\label{lem:coordinate} 
Consider system~\eqref{eq:oas} and let Assumption~\ref{ass:omega} hold. 
Then there exists a change of coordinates $T$ such that for $z(k):= T(x(k))$ the system
\begin{equation}\label{eq:system-transformed}
z(k+1) = \tilde G(z(k),u(k)), \qquad \forall k\in \N
\end{equation}
satisfies~\eqref{eq:omega} with linear $\omega_1 \in \K$.
\end{lemma}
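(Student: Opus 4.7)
The plan is to use a radial rescaling of coordinates, $T(x):=\alpha(\dabs{x})\,x/\dabs{x}$ for $x\neq 0$ with $T(0):=0$, where $\alpha\in\Ki$ is chosen so as to tame the growth of $\omega_1$. This $T$ is a homeomorphism of $\R^n$ with $T(0)=0$, and satisfies $\dabs{T(x)}=\alpha(\dabs{x})$ and $\dabs{T^{-1}(z)}=\alpha^{-1}(\dabs{z})$. As a harmless preparation, replace $\omega_1$ by $\bar\omega_1:=\max\{\omega_1,2\,\id\}\in\Ki$, which still satisfies \eqref{eq:omega} and strictly dominates the identity, so that $h:=2\bar\omega_1\in\Ki$ obeys $h(s)>s$ for all $s>0$.

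The core step is to construct $\alpha\in\Ki$ satisfying the functional equation $\alpha(h(s))=2\alpha(s)$ for all $s\geq 0$. Fix $s_0>0$ and consider the bi-infinite sequence $s_k:=h^k(s_0)$, $k\in\mathbb{Z}$, which is strictly increasing with $s_k\to 0$ as $k\to-\infty$ and $s_k\to\infty$ as $k\to\infty$ (since $h\in\Ki$ and $h>\id$). Define $\alpha$ on the fundamental interval $[s_0,s_1]$ as any continuous, strictly increasing function with $\alpha(s_0):=1$ and $\alpha(s_1):=2$, and extend to $(0,\infty)$ by the recursion $\alpha(h(s)):=2\alpha(s)$; set $\alpha(0):=0$. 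Inductively $\alpha(s_k)=2^k$, so $\alpha(s)\to 0$ as $s\to 0^+$ and $\alpha(s)\to\infty$ as $s\to\infty$, and the recursion glues the piecewise definitions continuously and strictly monotonically at every $s_k$, delivering $\alpha\in\Ki$.

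Setting $\tilde G(z,u):=T(G(T^{-1}(z),u))$ yields \eqref{eq:system-transformed} for $z(k):=T(x(k))$. Using the bound $a+b\leq 2\max\{a,b\}$, the monotonicity of $\alpha$, and the functional equation, a direct computation gives
\[
\dabs{\tilde G(z,u)}\leq \alpha\bigl(\bar\omega_1(\alpha^{-1}(\dabs{z}))+\omega_2(\dabs{u})\bigr)\leq \alpha\bigl(h(\alpha^{-1}(\dabs{z}))\bigr)+\alpha(2\omega_2(\dabs{u}))=2\dabs{z}+\tilde\omega_2(\dabs{u}),
\]
with $\tilde\omega_2:=\alpha\circ(2\omega_2)\in\K$. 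Hence $\tilde G$ satisfies \eqref{eq:omega} with the linear function $\tilde\omega_1(s):=2s$.

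The main obstacle is the construction of $\alpha$: one must verify that the recursion $\alpha\circ h=2\alpha$ pieces together into a genuine $\Ki$-function — continuous, strictly increasing, unbounded, and vanishing at $0$ — which reduces to checking consistency at the gluing points $s_k$ and the two limits $k\to\pm\infty$. Once this is established, the remainder of the proof is the short calculation above.
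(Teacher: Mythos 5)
Your proposal is correct and follows the same overall strategy as the paper's proof: a radial change of coordinates $T(x)=\alpha(\dabs{x})\,x/\dabs{x}$ with a $\Ki$-function $\alpha$ chosen to satisfy a multiplicative functional equation that linearizes the growth bound $\omega_1$. The one substantive difference is that the paper obtains its scaling function by invoking \cite[Lemma~19]{KT04}, which guarantees, for any $\lambda>1$ and any $\Ki$-function $\omega_1$ with $2\omega_1-\id\in\Ki$, a $\varphi\in\Ki$ with $\varphi(2\omega_1(s))=\lambda\varphi(s)$; you instead construct the solution $\alpha$ of $\alpha\circ h=2\alpha$ (with $h:=2\bar\omega_1>\id$) from scratch via a fundamental-domain argument on the orbit $s_k=h^k(s_0)$. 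Your construction is self-contained and makes explicit the mechanism behind the cited lemma; the paper's citation is shorter. In both cases the normalizing preparation ($\bar\omega_1=\max\{\omega_1,2\id\}$ versus the WLOG assumption $(2\omega_1-\id)\in\Ki$) serves the same purpose of guaranteeing $h>\id$ so that the orbit covers $(0,\infty)$, and the concluding estimate is the same computation using $\alpha(a+b)\le\alpha(2a)+\alpha(2b)$.
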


\begin{proof} Consider a change of coordinates $T: \R^n \rightarrow \R^n$, and define $z(k):= T(x(k))$, where $x(k)$ comes from \eqref{eq:oas}. Then $z$ satisfies \eqref{eq:system-transformed} with
\begin{equation*}\tilde G(z,u) = T(G(T^{-1}(z),u)).
\end{equation*} 
Note that $\tilde G(0,0)=0$ since $T$ and its inverse fix the origin.
Furthermore, let $\omega_1, \omega_2 \in \K$ satisfy \eqref{eq:omega} for the map $G$. 
Without loss of generality, we assume that $(2 \omega_1- \id) \in \Ki$, else increase $\omega_1$. Take any $\lambda>1$. 
By \cite[Lemma~19]{KT04} there exists a $\Ki$-function $\varphi$ satisfying
\begin{equation}\label{eq:varphi-coordinate}
\varphi ( 2\omega_1(s)) = \lambda \varphi(s) \qquad \forall s \geq 0.
\end{equation}
Define $T(x) := \varphi(\dabs{x})\frac{x}{\dabs{x}}$ for $x\neq 0$, and $T(0)=0$. 
Clearly, $T$ is continuous for $x \neq 0$. 
On the other hand, $\dabs{T(x)} = \varphi(\dabs{x})$, so continuity of $T$ in zero is implied by continuity of $\varphi$ and $\varphi(0)=0$.
With $z=T(x)$ a direct computation, using again that $\dabs{z} =
\varphi(\dabs{x})$ and that $x$ and $z$ are on the same ray,
yields $T^{-1}(z) := \varphi^{-1}(\dabs{z})\frac{z}{\dabs{z}}$ for $z\neq 0$ and $T^{-1}(0)=0$. 
By the same arguments as above also $T^{-1}$ is continuous. Hence, $T$ is a homeomorphism.
Moreover, we obtain the following estimate
\begin{align*}
\dabs{\tilde G(\tilde \xi, \tilde  \mu)} & = \varphi \left( \dabs{G\left(\varphi^{-1}(\dabs{\tilde \xi})\tfrac{\tilde \xi}{\dabs{\tilde \xi}}, \tilde \mu\right)} \right) 
\stackrel{\eqref{eq:omega}}{\leq} \varphi \left( \omega_1(\varphi^{-1}(\dabs{\tilde \xi})) + \omega_2(\dabs{\tilde \mu}) \right) \\
& \leq \varphi \left( 2\omega_1(\varphi^{-1}(\dabs{\tilde \xi}))  \right) + \varphi\left(2 \omega_2(\dabs{\tilde \mu}) \right) 
\stackrel{\eqref{eq:varphi-coordinate}}{=} \lambda\dabs{\tilde \xi} + \varphi\left(2\omega_2(\dabs{\tilde \mu}) \right).
\end{align*}
So, $\tilde G$ satisfies \eqref{eq:omega} 
with the linear function $\omega_1(s)=\lambda s$, $s \in \R_+$, which concludes the proof.
\hfill~\end{proof}

\begin{remark}\label{rem:0GAS}
We further note that ISS implies \emph{global asymptotic stability of the origin with $0$ input} ($0$-GAS), i.e., the existence of a class-$\KL$ function $\beta$ such that for all $\xi \in \R^n$ and all $k \in \N$,
\begin{equation*}
\dabs{x(k,\xi, 0)} \leq  \beta(\dabs{\xi},k). \end{equation*}
In \cite{Angeli99} the author shows that for discrete-time systems~\eqref{eq:oas} with continuous dynamics \emph{integral input-to-state stability} (iISS) is equivalent to 0-GAS. Note that this is not true in continuous time.
\end{remark}

To prove ISS of system~\eqref{eq:oas} the concept of ISS Lyapunov functions is widely used (see e.g. \cite{JW01-Automatica}). 
Note that in the following definition we do not require continuity of the ISS Lyapunov function.
To shorten notation, we call a function $W:\R^n \rightarrow \R_+$ \emph{proper and positive definite} if
there exist $\alpha_1, \alpha_2 \in \Ki$ such that for all $\xi \in \R^n$
\begin{equation*} \alpha_1 (\dabs{\xi}) \leq W(\xi) \leq \alpha_2(\dabs{\xi}). \end{equation*}
\begin{definition}\label{def:ISSLF}
A proper and positive
 definite
 function $W:\R^n \rightarrow \R_+$ is called
a \emph{dissipative ISS Lyapunov function} for system~\eqref{eq:oas}
if there exist $\sigma \in \K$ and a positive definite function $\rho$ with $(\id - \rho) \in \Ki$ such that for any $\xi \in \R^n, \mu \in \R^m$
\begin{equation} \label{eq:ISSdissipative}W(G(\xi,\mu)) \leq \rho(W(\xi)) + \sigma(\dabs{\mu}).\end{equation}
\end{definition}

\begin{remark}\label{rem:-alpha3}
(i)
In many prior works (e.g. \cite{JW01-Automatica,LHJ12}) the definition of a dissipative ISS Lyapunov function requires the existence of a function $\alpha_3\in \Ki$ and a function $\sigma\in\K$ such that 
\begin{equation} \label{eq:dissipative2}
W(G(\xi, \mu))-W(\xi) \leq -\alpha_3(\dabs{\xi}) + \sigma(\dabs{\mu}) 
\end{equation} 
holds for all $\xi \in \R^n, \mu \in \R^m$. Let us briefly explain, that this requirement is equivalent to Definition~\ref{def:ISSLF}. 
First, from~\eqref{eq:dissipative2} and the positive definiteness of $W$ we get
$0 \leq W (G(\xi, \mu)) \leq W (\xi) - \alpha_3(\dabs{\xi}) + \sigma(\dabs{\mu}) \leq (\id - \alpha_3 \circ \alpha_2^{-1})(W(\xi)) +  \sigma(\dabs{\mu}) = \rho(W(\xi)) +  \sigma(\dabs{\mu})$ with $\rho := (\id -  \alpha_3 \circ \alpha_2^{-1})$ positive definite, and $(\id-\rho)=\alpha_3\circ \alpha_2^{-1}\in \Ki$. 
So~\eqref{eq:dissipative2} implies~\eqref{eq:ISSdissipative}.
To show the other implication, note that since $0 \leq W(G(\xi,\mu))  \leq (\alpha_2- \alpha_3)(\dabs{\xi})+ \sigma(\dabs{\mu})$ holds it follows that $ \alpha_2(s) \geq \alpha_3(s)$ for all $s \in \R_+$  by taking $\mu=0$.
Let~\eqref{eq:ISSdissipative} hold with positive definite function $\rho$ satisfying $(\id - \rho) \in \Ki$, then we get $ W(G(\xi,\mu))-W(\xi) \leq -\alpha_3(\dabs{\xi})+ \sigma(\dabs{\mu})$ for $\alpha_3:=(\id-\rho)\circ \alpha_1\in~\Ki$, which is~\eqref{eq:dissipative2}.
\medskip
\\(ii)
For systems with external inputs $u$ there are usually two forms of ISS
Lyapunov functions.
 The first one is the dissipative form of Definition~\ref{def:ISSLF}.
The other 
type are frequently called implication-form ISS Lyapunov functions. These are proper and positive definite function $W:\R^n \rightarrow \R_+$ satisfying 
\begin{equation}\label{eq:ISSimplication} \dabs{\xi} \geq \chi(\dabs{\mu})\quad  \Rightarrow \quad W(G(\xi,\mu))\leq \bar \rho(W(\xi)) .\end{equation}
for all $\xi \in \R^n$, $\mu \in \R^m$, and some
positive definite function $\bar \rho<\id$ and $\chi\in \K$.

If the function $G$ in~\eqref{eq:oas} is \emph{continuous} then conditions~\eqref{eq:ISSdissipative} and~\eqref{eq:ISSimplication} are equivalent, see~\cite[Remark~3.3]{JW01-Automatica} and~\cite[Proposition~3.3 and 3.6]{GK-discontinuousISS}.
So the existence of a dissipative or implication-form ISS Lyapunov function implies ISS of the system if the dynamics are continuous.

If $G$ is \emph{discontinuous} then the equivalence between the existence of dissipative and implication-form ISS Lyapunov functions is no longer satisfied.
Indeed, any dissipative ISS Lyapunov function is an implication-form ISS Lyapunov function, 
but the converse does not hold in general, see~\cite{GK-discontinuousISS}. 
In particular, for discontinuous dynamics, an implication-form ISS Lyapunov function is not sufficient to conclude ISS, see also~\cite[Remark~2.1]{LailaNesic03} and \cite[Example~3.7]{GK-discontinuousISS}.
\medskip
\\(iii)
To prove ISS of system~\eqref{eq:oas}, the authors in~\cite[Proposition~2.4]{GK-discontinuousISS} have shown that
the assumption $(\id - \rho) \in \Ki$ in Definition~\ref{def:ISSLF} can be weakened to the condition $(\id-\rho) \in \K$ and $\sup (\id-\rho)>\sup \sigma$.
Moreover, for any ISS system~\eqref{eq:oas} there exists a dissipative ISS Lyapunov function $W$ with linear decrease function $\rho$, see \cite[Theorem~2.6]{GK-discontinuousISS}.
\end{remark}

We relax the condition~\eqref{eq:ISSdissipative} in Definition~\ref{def:ISSLF} by replacing the solution after one time step $G(\xi, \mu) = x(1, \xi, \mu)$ by the solution after a finite number of time steps. 
This relaxation was recently introduced in \cite{LDA13,GLW13-SG,GGLW13-SCL} for systems without inputs. In the context of ISS, i.e., for systems with inputs, this concept appears to be new.
 
\begin{definition}\label{def:ftISSLF}
A proper and positive definite function $V:\R^n \rightarrow \R_+$ is called a \emph{dissipative finite-step ISS Lyapunov function} for system~\eqref{eq:oas} if there exist an $M\in \N$, $\sigma \in \K$, a positive definite function $\rho$ with $(\id-\rho)\in \Ki$ such that for any $\xi \in \R^n$, $u(\cdot) \in l^\infty (\R^m)$
\begin{equation} 
\label{eq:decayV}
V(x(M,\xi,u(\cdot))) \leq  \rho(V(\xi)) + \sigma(\dabsu{u}).  \end{equation}
\end{definition}

At first glance the task of finding an ISS Lyapunov function appears to have become easier, as we are only required to satisfy a condition after a finite number of steps. 
We will show later that there is some truth to this point of view, in that it is possible to show that a simple class of functions always yields a dissipative finite-step ISS Lyapunov function. 
Unfortunately, there is now a new general question: It is not sufficient to know a dissipative finite-step ISS Lyapunov function, but we also require to know the constant $M$, which may be hard to characterize.

In the next section we study properties of  dissipative finite-step ISS Lyapunov functions.

\section{Dissipative Finite-Step ISS Lyapunov Theorems}\label{sec:ftISS}

We start this section by proving that the existence of a dissipative finite-step ISS Lyapunov function is sufficient to conclude ISS of system~\eqref{eq:oas}. 
As any dissipative ISS Lyapunov function is a particular dissipative finite-step ISS Lyapunov function, this result is closely related to~\cite[Lemma~3.5]{JW01-Automatica}. Furthermore, the class of dissipative ISS Lyapunov functions is a strict subset of the class of dissipative finite-step ISS Lyapunov functions. Hence, this result is more general than showing that the existence of a dissipative ISS Lyapunov function implies ISS of the underlying system.
The proof requires a comparison lemma and an additional lemma, which are given in the appendix.

\begin{theorem}\label{theo:ftLF-ISS} If there exists a dissipative finite-step ISS Lyapunov function for system~\eqref{eq:oas} then system~\eqref{eq:oas} is ISS.
\end{theorem}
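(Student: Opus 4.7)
The plan is to convert the finite-step dissipation inequality into a genuine trajectory-wise ISS estimate by (a) iterating along the lattice $\{jM : j\in\N\}$ to obtain a scalar comparison recursion for $V$, (b) converting that recursion into a $\KL$-plus-$\K$ bound on $V(x(jM,\xi,u(\cdot)))$, and then (c) filling in the at most $M-1$ intermediate time steps via the $\K$-boundedness of $G$ from Assumption~\ref{ass:omega}.

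More concretely, first I would exploit time-invariance of \eqref{eq:oas}: since shifting the input $u(\cdot)$ by $jM$ does not increase $\dabsu{u}$, applying \eqref{eq:decayV} with initial state $x(jM,\xi,u(\cdot))$ and the shifted input yields
\begin{equation*}
a_{j+1} \;\leq\; \rho(a_j) + \sigma(\dabsu{u}), \qquad a_j := V(x(jM,\xi,u(\cdot))), \quad j\in\N.
\end{equation*}
Because $\rho$ is positive definite with $(\id-\rho)\in\Ki$, the comparison lemma from the appendix (together with the auxiliary lemma mentioned by the author) provides $\tilde\beta\in\KL$ and $\hat\sigma\in\K$ such that
\begin{equation*}
a_j \;\leq\; \tilde\beta(a_0,j) + \hat\sigma(\dabsu{u}) \qquad\forall\, j\in\N.
\end{equation*}
Using $a_0 = V(\xi)\leq\alpha_2(\dabs{\xi})$ this already controls $V$, and hence $\dabs{\cdot}$ via $\alpha_1$, at the sampling instants $k=jM$.

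To pass from the lattice $\{jM\}$ to every $k\in\N$, write $k = jM + \ell$ with $0\leq\ell<M$ and iterate Assumption~\ref{ass:omega} $\ell$ times starting from $x(jM,\xi,u(\cdot))$. A short induction on $\ell$ gives functions $\Omega_1,\Omega_2\in\K$ (depending only on $\omega_1,\omega_2,M$) such that
\begin{equation*}
\dabs{x(k,\xi,u(\cdot))} \;\leq\; \Omega_1\!\left(\dabs{x(jM,\xi,u(\cdot))}\right) + \Omega_2(\dabsu{u}).
\end{equation*}
Combining with the bound on $a_j$, using $\dabs{x(jM,\xi,u(\cdot))}\leq \alpha_1^{-1}(a_j)$, and splitting a sum of $\K$-functions into a maximum in the usual way, I obtain
\begin{equation*}
\dabs{x(k,\xi,u(\cdot))} \;\leq\; \beta(\dabs{\xi},k) + \gamma(\dabsu{u}),
\end{equation*}
where $\beta(r,k) := \Omega_1\!\bigl(\alpha_1^{-1}(2\tilde\beta(\alpha_2(r),\lfloor k/M\rfloor))\bigr)$ is $\KL$ (using monotonicity of $\tilde\beta$ in the second argument and the fact that $k\mapsto \lfloor k/M\rfloor$ tends to $\infty$) and $\gamma$ is a $\K$-function absorbing $\Omega_1\circ\alpha_1^{-1}\circ 2\hat\sigma$ and $\Omega_2$.

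The main obstacle is step (b): turning the scalar recursion $a_{j+1}\leq\rho(a_j)+\sigma(\dabsu{u})$ into a $\KL$-plus-$\K$ envelope. The subtlety is that $\rho$ is only positive definite (not necessarily of class $\K$) and the additive input term forbids a direct monotone iteration; one must separate the regimes $a_j$ large versus comparable to $\sigma(\dabsu{u})$ and use $(\id-\rho)\in\Ki$ to extract a uniform decay on the ``large'' part, which is exactly what the comparison lemma in the appendix is tailored to do. Once that lemma is invoked, the remaining bookkeeping is straightforward.
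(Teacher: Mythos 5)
Your proposal is correct and follows essentially the same strategy as the paper: iterate the finite-step inequality along a time lattice of period $M$, separate the regime where $V$ dominates the input term (to extract a strict contraction $\chi<\id$), invoke the appendix comparison lemma to obtain a $\KL$-decay along the lattice, and fill in the intermediate time steps via the global $\K$-bound of Assumption~\ref{ass:omega}.

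Two small remarks on how your organization differs from the paper's. First, you propagate along the single lattice $\{jM\}_{j\in\N}$ and then push \emph{forward} by $\ell\in\{0,\ldots,M-1\}$ steps from $x(jM)$ using Lemma~\ref{lem:bounds}; the paper instead works with the $M$ shifted lattices $\{jM+k_0\}$ for $k_0\in\{0,\ldots,M-1\}$, applies the comparison lemma on each, and uses Lemma~\ref{lem:bounds} once at the start to control $V_M^{\max}(\xi,u(\cdot))=\max_{j<M}V(x(j))$. Both are correct; the paper's version automatically produces a $\KL$-envelope in the true time variable $k$ (via the piecewise-linear interpolation built into Lemma~\ref{lem:comparison}), whereas your formula using $\lfloor k/M\rfloor$ yields a step function in $k$ that must still be smoothed to obtain a genuine $\KL$-bound — a harmless but necessary fix. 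Second, you attribute the regime-splitting to the appendix comparison lemma, but Lemma~\ref{lem:comparison} itself only treats the homogeneous recursion $y((l+1)M+k_0)\le\chi(y(lM+k_0))$ with $\chi<\id$; the separation into the set $\Delta=\{V\le(\id-\rho)^{-1}\circ\nu^{-1}\circ\sigma(\dabsu{u})\}$ and its complement is carried out in the body of the proof of Theorem~\ref{theo:ftLF-ISS}, and only the resulting pure-decay recursion is handed to the lemma. Your sketch identifies the right idea; just be aware that the lemma as stated does not absorb the input term on its own.
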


The proof follows the lines of \cite[Lemma~3.5]{JW01-Automatica}, which establishes that for continuous dynamics the existence of a continuous dissipative ISS Lyapunov function implies ISS of the system. 
Note that in this work we consider global $\K$-boundedness of the system dynamics instead of continuity such as in~\cite{JW01-Automatica}. 

\begin{proof}
Let $V$ be a dissipative finite-step ISS Lyapunov function satisfying Definition~\ref{def:ftISSLF} for system \eqref{eq:oas} with suitable $\alpha_1, \alpha_2 \in \Ki$, $M \in \N$, $\sigma\in \K$, and a positive definite function $\rho$ with  $(\id-\rho)\in\Ki$. 
Let $\xi \in \R^n$ and fix any input $u(\cdot)\in l^\infty (\R^m)$. 
We abbreviate the state $x(k):=x(k, \xi, u(\cdot))$. 
Let $\nu\in \Ki$ be such that $\id-\nu \in \Ki$ and consider the set 
\begin{equation*} \Delta:= \left \{ \xi \in \R^n : V(\xi) \leq \delta :=(\id- \rho)^{-1} \circ \nu^{-1} \circ \sigma(\dabsu{u}) \right \}.
\end{equation*}
 We will now show that for any $k \in \N$ with $x(k) \in \Delta$ we have $x(k+lM) \in \Delta$ for all $l\in \N$. Using~\eqref{eq:decayV}, a direct computation yields
 \begin{align*}
 V(x(k+M)) & \leq \rho(V(x(k))) + \sigma(\dabsu{u}) 
  \leq \rho(\delta) + \sigma(\dabsu{u}) \\
 & =  - (\id -\nu) \circ (\id-\rho) (\delta) + \delta -  \nu \circ (\id-\rho) (\delta) + \sigma(\dabsu{u}) \\
 &= - (\id -\nu) \circ (\id-\rho) (\delta) + \delta  \leq \delta.
 \end{align*}
Hence, $x(k+M) \in \Delta$ and by induction we get $x(k+lM) \in \Delta$ for all $l \in \N$.

Let $j_0 \in \N \cup \{\infty\}$ satisfy $j_0 := \min \{ k \in \N: x(k), \ldots, x(k+M-1) \in \Delta \}$. 
By definition of $j_0$ and by the above consideration, we see that $x(k) \in \Delta$ for all $k \geq j_0$. Thus, we have
\begin{equation} \label{eq:V-leq-gamma}
 V(x(k)) \leq  (\id- \rho)^{-1} \circ \nu^{-1} \circ \sigma (\dabsu{u}) =: \tilde \gamma (\dabsu{u}).
\end{equation}
For $k < j_0$, we have to consider two cases. 

First, if
$x(k) \in \Delta$ then by definition of $\Delta$ we have $V(x(k)) \leq \tilde \gamma (\dabsu{u})$.
Secondly, if
$x(k)  \not \in \Delta$ 
let $l \in \N$ and $k_0 \in \{0, \ldots, M-1\}$ satisfy $k = lM+k_0$.
Since $x(k_0) \in \Delta$ implies $x(lM+k_0) \in \Delta$, we conclude $x(k_0) \not \in \Delta$.
Hence, by definition of $\Delta$,
 $V(x(k_0)) > (\id- \rho)^{-1} \circ \nu^{-1} \circ \sigma(\dabsu{u})$, or, equivalently, 
$\sigma(\dabsu{u}) <  \nu \circ (\id - \rho) \circ V(x(k_0))$,
 which  implies 
\begin{align*}
V(x(k_0+M)) & \leq \rho(V(x(k_0))) + \sigma(\dabsu{u}) \\
&< \rho(V(x(k_0))) + \nu \circ (\id - \rho) \circ V(x(k_0)) \\
 &= (\rho + \nu  \circ (\id-  \rho))\circ V(x(k_0)).
\end{align*}
Note that the function $\chi:=(\rho + \nu\circ (\id -  \rho)) $ satisfies $\chi= \id - (\id - \nu) \circ (\id-\rho)<\id$. 
Let $L:= \sup \{l \in \N \, : \, V(x(lM+k_0)) \not \in \Delta \}$.
Then we have for all $l \in \{0, \ldots, L\}$
\begin{equation*}
V(x((l+1)M+k_0)) \leq \chi (V(x(lM+k_0))).
\end{equation*}
Note that the function $\chi = \rho + \nu \circ (\id-\rho)$ is continuous, positive definite and  unbounded as $\nu, (\id-\rho) \in \Ki$, and it satisfies $\chi(0)=0$. 
Hence, we can without loss of generality assume that $\chi \in \Ki$, else pick $\tilde \chi \in \Ki$ satisfying $\chi \leq \tilde \chi < \id$.
Applying Lemma~\ref{lem:comparison}
there exists a $\KL$-function $\beta_{k_0}$ satisfying
\begin{equation*}
V(x(lM+k_0)) \leq  \beta_{k_0}( V(x(k_0)), lM+k_0)
\end{equation*}
for all $l \in \{0, \ldots, L\}$.
Moreover, for all $l>L$, we have $V(x(lM+k_0)) \in \Delta$ implying $V(x(lM+k_0)) \leq \tilde \gamma (\dabsu{u})$.
Thus, for all $l \in \N$, we have
\begin{equation}\label{eq:VxlM+k_0}
V(x(lM+k_0)) \leq \max \left\{  \beta_{k_0} \left(V(x(k_0)), lM+k_0 \right),  \tilde \gamma (\dabsu{u}) \right\}.
\end{equation}
It is important to note that both $\tilde \gamma$ and $\chi$ are independent on the choice of $\xi \in \R^n$ and $u(\cdot) \in l^\infty (\R^m)$.
In addition, by the proof of Lemma~\ref{lem:comparison}, also $\beta_{k_0} \in \KL$ does not depend on $\xi \in \R^n$ and $u(\cdot) \in l^\infty(\R^m)$. 
Hence,~\eqref{eq:VxlM+k_0} holds for all solutions $x(k)$.

Define the $\KL$-function
\begin{equation*}
 \tilde \beta(s,r) := 
 \max_{k_0 \in \{0, \ldots, M-1\}} 
 \beta_{k_0} (s,r)
\end{equation*}
and $V_M^{\max}(\xi, u(\cdot)):= \max_{j \in \{0, \ldots, M-1\}} V(x(j, \xi, u(\cdot)))$.
Then for all $k \in \N$, all $\xi \in \R^n$ and all $u(\cdot) \in l^\infty(\R^m)$ we have
\begin{equation*}
V(x(k)) \leq \max \left\{ \tilde \beta (V_M^{\max} (\xi, u(\cdot)),k), \tilde \gamma (\dabsu{u}) \right\}.
\end{equation*}
Consider $\vartheta_j, \zeta_j \in \K$  from 
Lemma~\ref{lem:bounds} and define
$\tilde \vartheta := \max_{j\in\{0, \ldots, M-1\}} \alpha_2(2\vartheta_j)$ and $\tilde \zeta := \max_{j\in\{0, \ldots, M-1\}} \alpha_2(2\zeta_j)$.
Then for all $\xi \in \R^n$ and $u(\cdot) \in l^\infty(\R^m)$
we get
\begin{equation*}
V_M^{\max} (\xi, u(\cdot)) \leq \max_{j \in \{0, \ldots, M-1\}} \alpha_2 (\dabs{x(j)}) \leq \tilde \vartheta(\dabs{\xi}) + \tilde \zeta (\dabsu{u}).
\end{equation*}
So all in all we have for all $k \in \N$, all $\xi \in \R^n$ and all $u(\cdot) \in l^\infty$,
\begin{align*}
V(x(k)) &\leq \max \left\{ \tilde  \beta(\tilde \vartheta(\dabs{\xi}) + \tilde \zeta (\dabsu{u}), k), \tilde \gamma(\dabsu{u}) \right\} \\
 &\leq \max \left\{ \tilde  \beta(2\tilde \vartheta(\dabs{\xi}), k) + \tilde \beta(2\tilde \zeta (\dabsu{u}),0), \tilde \gamma(\dabsu{u}) \right\} \\
&\leq  \tilde  \beta(2\tilde \vartheta(\dabs{\xi}), k) +  \left (\tilde \beta(2\tilde \zeta (\dabsu{u}),0) + \tilde \gamma(\dabsu{u}) \right).
\end{align*}
Hence, we get \eqref{eq:ISSestimate} by defining 
$\beta(s,r) := \alpha_1^{-1}(2\tilde \beta(2\tilde \vartheta(s),r))$ 
and \linebreak%
$\gamma(s) := \alpha_1^{-1}\left(2 (\tilde \beta(2\tilde \zeta (\dabsu{u}),0) + \tilde \gamma(\dabsu{u}) )\right)$. 
Note that for fixed $r \geq 0$, $\beta(\cdot, r)$ is a $\K$-function as the composition of $\K$-functions, and for fixed $s >0$, $\beta(s, \cdot) \in \L$, since the composition of $\K$- and $\L$-functions is of class $\L$ (see \cite[Section~24]{Hahn67}, \cite[Section~2]{Kellet-comparison}), so really $\beta \in \KL$. 
Further note that the summation of class-$\K$ functions yields a class-$\K$ function, so $\gamma \in \K$.
\hfill~\end{proof}

\begin{remark}
To clarify the concept of dissipative finite-step ISS Lyapunov functions we now discuss the connection to higher order iterates of system~\eqref{eq:oas}.

Let $G:\R^n\times \R^m \to \R^n$ from~\eqref{eq:oas} be given. Then, for any $i\in \N$ with $i\geq 1$, we define the $i$th iterate of $G$, denoted by $G^i : \R^n \times (\R^m)^i \to \R^n $, as follows:
\begin{align*}
\xi\in \R^n,\,w_1=u_1 \in\R^m &\,\mapsto\, G^1(\xi,w_1):= G(\xi,u_1), 
\\ \xi \in \R^n, \,w_i:=(u_1,\ldots,u_i) \in  (\R^m)^i & \,\mapsto \, G^i(\xi,w_i):= G(G^{i-1}(\xi,w_{i-1}),u_i)
\end{align*}
with $i\in \N$, $i\geq 2$.
Now fix any $M\in\N$, and consider the system
\begin{equation}\label{eq:oasMiterate}
\bar x(k+1) = G^M(\bar x(k),w_M(k))
\end{equation}
with state $\bar x \in \R^n$ and input function $w_M(\cdot)=(u_1(\cdot),\ldots, u_M(\cdot))$ taking values in $(\R^m)^M$. 
Firstly, for any $k\in\N$ there exist unique $l\in\N$ and $i\in \{1,\ldots,M\}$ such that $k=lM+i$. 
For $u: \N \to \R^m$ define $u_i$, $i \in \{1,..,M\}$, by
\begin{equation*}
u_i(l):= u(lM+i), \qquad l\in\N,
\end{equation*}
we call~\eqref{eq:oasMiterate} the 
$M$-iteration corresponding to system~\eqref{eq:oas}. 
Note that $\dabsu{w_M}\!:=\max\{ \dabsu{u_1},\ldots, \dabsu{u_M}\}=\dabsu{u}$.
It is not difficult to see that for all $j\in \N$ and all $\xi \in \R^n$ we have
\begin{equation}\label{eq:xbarx-iterated}
x(jM,\xi,u(\cdot)) = \bar x(j,\xi,w_M(\cdot)).
\end{equation}
Thus, if system~\eqref{eq:oas} is ISS, i.e., it satisfies~\eqref{eq:ISSestimate}, then also the $M$-iteration~\eqref{eq:oasMiterate} is ISS and satisfies
\begin{equation*}
\dabs{\bar x(j,\xi,w_M(\cdot))} \stackrel{\eqref{eq:xbarx-iterated}}{=} \dabs{x(jM,\xi,u(\cdot))} \leq \beta(\dabs{\xi},jM)+\gamma(\dabsu{u}) =: \bar \beta(\dabs{\xi},j)+\gamma(\dabsu{w_M}).
\end{equation*}
Moreover, a dissipative finite-step ISS Lyapunov function for system~\eqref{eq:oas} with suitable $M\in \N$ is also a dissipative ISS Lyapunov function for the $M$-iteration~\eqref{eq:oasMiterate}.

Conversely, let system~\eqref{eq:oasMiterate} be ISS then there exists a dissipative ISS Lyapunov function $V$ for system~\eqref{eq:oasMiterate} (see e.g. \cite[Theorem~1]{JW01-Automatica} for continuous $G^M$ or \cite[Lemma~2.3]{GK-discontinuousISS} for discontinuous $G^M$). 
From~\eqref{eq:xbarx-iterated} we see that $V$ is also a dissipative finite-step ISS Lyapunov function for system~\eqref{eq:oas}, and by Theorem~\ref{theo:ftLF-ISS} we conclude that system~\eqref{eq:oas} is ISS.
\end{remark}

Summarizing, we obtain the following corollary.

\begin{corollary}
System~\eqref{eq:oas} is ISS if and only if the 
$M$-iteration~\eqref{eq:oasMiterate} is ISS. In particular, a function $V:\R^n \rightarrow \R_+$ is a dissipative Lyapunov function for system~\eqref{eq:oasMiterate} if and only if it is a dissipative finite-step Lyapunov function for system~\eqref{eq:oas}.
\end{corollary}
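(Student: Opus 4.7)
The plan is to prove the second (Lyapunov) statement first and then harvest the first (ISS) statement from it together with \eqref{eq:xbarx-iterated} and a converse ISS Lyapunov theorem. The whole content of the corollary is essentially the bookkeeping of the identity
\begin{equation*}
x(M,\xi,u(\cdot)) = G^M\bigl(\xi,(u(0),\ldots,u(M-1))\bigr),
\end{equation*}
together with its iterate \eqref{eq:xbarx-iterated}, which have already been made explicit in the preceding remark.

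\textbf{Lyapunov equivalence.} Given a dissipative ISS Lyapunov function $V$ for the $M$-iteration \eqref{eq:oasMiterate} with $\rho, \sigma$, fix $\xi \in \R^n$ and $u(\cdot) \in l^\infty(\R^m)$ and set $\bar \mu := (u(0),\ldots,u(M-1)) \in (\R^m)^M$. Then $x(M,\xi,u(\cdot)) = G^M(\xi,\bar\mu)$, so the dissipation inequality for $G^M$ applied at $(\xi,\bar\mu)$ yields $V(x(M,\xi,u(\cdot))) \le \rho(V(\xi)) + \sigma(\dabs{\bar\mu})$. Using the monotone-norm bound \eqref{eq:norm}, $\dabs{\bar\mu} \le \kappa \max_{i<M}\dabs{u(i)} \le \kappa \dabsu{u}$, so with $\tilde\sigma(\cdot):=\sigma(\kappa\,\cdot)\in\K$ we recover \eqref{eq:decayV}. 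Conversely, given a dissipative finite-step ISS Lyapunov function for \eqref{eq:oas}, fix any $\bar\xi \in \R^n$ and $\bar\mu \in (\R^m)^M$ and construct the input $u(i) = \mu_{i+1}$ for $0\le i < M$ and $u(i)=0$ for $i \ge M$. Then $G^M(\bar\xi,\bar\mu)=x(M,\bar\xi,u(\cdot))$, and by equivalence of norms there is $c>0$ with $\dabsu{u} = \max_{i<M}\dabs{\mu_{i+1}} \le c\,\dabs{\bar\mu}$; feeding this into \eqref{eq:decayV} produces the dissipative ISS inequality for \eqref{eq:oasMiterate} with $\tilde\sigma(s):=\sigma(cs)$. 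Properness/positive definiteness of $V$ and the class of $\rho$ are unchanged under this translation.

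\textbf{ISS equivalence.} For the direct implication, if \eqref{eq:oas} is ISS with $\beta\in\KL$ and $\gamma\in\K$ as in \eqref{eq:ISSestimate}, then \eqref{eq:xbarx-iterated} gives $\dabs{\bar x(j,\xi,w_M(\cdot))} = \dabs{x(jM,\xi,u(\cdot))} \le \beta(\dabs{\xi},jM)+\gamma(\dabsu{u})$, and since $\dabsu{w_M}=\dabsu{u}$ the function $\bar\beta(r,j):=\beta(r,jM)$ is of class $\KL$ and delivers ISS of \eqref{eq:oasMiterate}. For the converse, if \eqref{eq:oasMiterate} is ISS, a converse Lyapunov theorem (\cite[Theorem~1]{JW01-Automatica} for continuous $G^M$, or \cite[Lemma~2.3]{GK-discontinuousISS} in the discontinuous case) supplies a dissipative ISS Lyapunov function for \eqref{eq:oasMiterate}. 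By the Lyapunov equivalence just proved, the same function is a dissipative finite-step ISS Lyapunov function for \eqref{eq:oas}, so Theorem~\ref{theo:ftLF-ISS} yields ISS of \eqref{eq:oas}. The only nontrivial bookkeeping is the norm translation between $(\R^m)^M$ and the sup norm on sequences in $\R^m$, handled by \eqref{eq:norm} and equivalence of norms; this is where care is needed, but no new idea beyond the preceding remark is required.
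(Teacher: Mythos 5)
Your proposal is correct and follows essentially the same route as the paper: the corollary is presented by the authors as a summary of the preceding remark, which establishes the identity $x(jM,\xi,u(\cdot)) = \bar x(j,\xi,w_M(\cdot))$, observes that this makes a dissipative finite-step ISS Lyapunov function for \eqref{eq:oas} a dissipative ISS Lyapunov function for the $M$-iteration \eqref{eq:oasMiterate} (and conversely), and invokes the converse ISS Lyapunov theorems from \cite{JW01-Automatica,GK-discontinuousISS} together with Theorem~\ref{theo:ftLF-ISS} for the backward ISS implication. You spell out the norm bookkeeping between $(\R^m)^M$ and $l^\infty(\R^m)$ slightly more carefully than the remark does, but no new idea is introduced.
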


As finding a (dissipative) ISS Lyapunov function is a hard task, we will see in the remainder of this work that finding a dissipative finite-step ISS Lyapunov function (or, equivalently, a dissipative Lyapunov function for a corresponding $M$-iteration) is sometimes easier. Furthermore, if we impose stronger conditions on the dissipative finite-step ISS Lyapunov function and the dynamics, then we can conclude an exponential decay of the bound on the system's state.

\begin{theorem}\label{theo:ftLF-expISS} 
Let system~\eqref{eq:oas} be globally $\K$-bounded with linear $\omega_1 \in \Ki$.
If there exists a dissipative finite-step ISS Lyapunov function $V$ for system~\eqref{eq:oas} satisfying for any $\xi \in \R^n$ and $u(\cdot) \in l^\infty (\R^m)$
\begin{align*}
 a\dabs{\xi}^\lambda   \leq V(\xi) \leq b \dabs{\xi}^\lambda , \\
 V(x(M,\xi, u(\cdot)))  \leq c V(\xi) + d \dabsu{u}
\end{align*}
with $b\geq a>0, c \in [0,1)$ and $d,\lambda>0$, then system~\eqref{eq:oas} is expISS.
\end{theorem}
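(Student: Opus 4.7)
The strategy is threefold: iterate the finite-step dissipation inequality along the sub-sequence $\bigl(x(jM,\xi,u(\cdot))\bigr)_{j\in\N}$ to obtain exponential decay of $V$ on that subsequence, translate this to an exponential decay of $\dabs{x(jM,\xi,u(\cdot))}$ that is \emph{linear} in $\dabs{\xi}$ via the $\lambda$-power bounds on $V$, and then fill in the intermediate time steps $jM+i$, $i\in\{0,\ldots,M-1\}$, by iterating the linear global $\K$-bound of $G$ at most $M-1$ times.

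Set $\bar x(j) := x(jM,\xi,u(\cdot))$. The semigroup property together with shift-invariance of the $\ell^\infty$-norm yields $V(\bar x(j+1)) = V(x(M,\bar x(j),u(jM+\cdot))) \leq c\,V(\bar x(j)) + d\,\dabsu{u}$, and induction on $j$ gives $V(\bar x(j)) \leq c^j V(\xi) + \tfrac{d}{1-c}\dabsu{u}$ for all $j\in\N$ (if $c=0$ I replace $c$ by any $c'\in(0,1)$, for which the hypothesis still holds). Combining $a\dabs{\bar x(j)}^\lambda \leq V(\bar x(j))$ and $V(\xi)\leq b\dabs{\xi}^\lambda$ produces $a\dabs{\bar x(j)}^\lambda \leq b\,c^j\dabs{\xi}^\lambda + \tfrac{d}{1-c}\dabsu{u}$. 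Applying the elementary inequality $(A+B)^{1/\lambda} \leq K_\lambda (A^{1/\lambda}+B^{1/\lambda})$ with $K_\lambda := \max\{1,\,2^{1/\lambda-1}\}$ separates this into a bound linear in $\dabs{\xi}$: $\dabs{\bar x(j)} \leq K_\lambda (b/a)^{1/\lambda}\bigl(c^{1/\lambda}\bigr)^{j}\dabs{\xi} + K_\lambda\bigl(\tfrac{d}{a(1-c)}\bigr)^{1/\lambda}\dabsu{u}^{1/\lambda}$.

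For the intermediate steps, linearity of $\omega_1$ lets me write $\omega_1(s)=Ls$, so $\dabs{G(\xi,\mu)} \leq L\dabs{\xi}+\omega_2(\dabs{\mu})$. Iterating this estimate at most $M-1$ times starting from $\bar x(j)$ gives $\dabs{x(jM+i)} \leq L^{i}\dabs{\bar x(j)} + C_L\,\omega_2(\dabsu{u})$ for a constant $C_L$ independent of $\xi$, $u$, $j$. Writing $k = jM+i$ and setting $\kappa := c^{1/(M\lambda)} \in [0,1)$ so that $(c^{1/\lambda})^{j} = \kappa^{k-i}$, then absorbing the bounded factors $L^{i}\leq L^{M-1}$ and $\kappa^{-i}\leq \kappa^{-(M-1)}$ into a single constant $C\geq 1$ yields $\dabs{x(k)} \leq C\kappa^{k}\dabs{\xi} + \tilde\gamma(\dabsu{u})$ for a suitable $\tilde\gamma\in\K$, which is exactly~\eqref{eq:ISSestimate} with $\beta$ of the form~\eqref{eq:expISSestimate}. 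The main subtleties I expect are minor rather than deep: the exponent $\lambda$ may be strictly less than one, so that $s\mapsto s^{1/\lambda}$ is not sub-additive (handled by the constant $K_\lambda$), and one must track the $i$-dependent factors $L^{i}$ and $\kappa^{-i}$ over $i<M$ to absorb them into one constant. The argument relies essentially on the \emph{linearity} of $\omega_1$, without which the intermediate-step bound would not be linear in $\dabs{\bar x(j)}$ and the expISS form $\beta(r,t)=C\kappa^{t}r$ could not be preserved.
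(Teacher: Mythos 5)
Your proof is correct, and it takes a genuinely more direct route than the paper's. The paper derives Theorem~\ref{theo:ftLF-expISS} by re-running the machinery of Theorem~\ref{theo:ftLF-ISS}: it reintroduces the set $\Delta$ with a scaling $\nu(s)=hs$, splits into cases according to whether $x(k)\in\Delta$, and invokes the specialized comparison Lemma~\ref{lem:comparisonexp} and trajectory-bound Lemma~\ref{lem:boundsexp}; this yields a contraction rate $\tilde\kappa = c + h(1-c)>c$. That case split is genuinely needed in the general ISS setting, because for a nonlinear $\rho$ the inhomogeneous recursion $V(x((l+1)M))\leq \rho(V(x(lM)))+\sigma(\dabsu{u})$ cannot be unrolled into a clean $\KL$ estimate. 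You instead exploit the fact that here $\rho(s)=cs$ is \emph{linear}, so the recursion can be summed directly to give $V(\bar x(j))\leq c^{j}V(\xi)+\tfrac{d}{1-c}\dabsu{u}$, with no $\Delta$-set argument, no auxiliary parameter $h$, and (as a bonus) the sharper rate $\kappa=c^{1/(\lambda M)}$ rather than $(c+h(1-c))^{1/(\lambda M)}$. The remaining steps — the $K_\lambda$ correction because $s\mapsto s^{1/\lambda}$ may fail sub-additivity when $\lambda<1$, the treatment of $c=0$ by replacing it with $c'\in(0,1)$, the $M-1$ iterations of the linear $\omega_1$-bound to cover $k=jM+i$, and the absorption of the bounded factors $L^{i}\kappa^{-i}$ over $i<M$ into a single constant $C$ — are all sound and mirror the essential ingredients of Lemma~\ref{lem:boundsexp} and the final absorbing step in the paper's sketch. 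One minor difference: the paper assumes $\omega_2$ linear "to simplify the proof"; you keep $\omega_2$ general, which is cleaner since it only affects the class-$\K$ gain $\tilde\gamma$ and not the $\KL$ rate, exactly as expISS requires. In short, the paper's route buys uniformity of exposition with Theorem~\ref{theo:ftLF-ISS}; your route buys brevity, fewer auxiliary constructions, and a tighter decay constant.
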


\begin{proof}
The proof follows the lines of the proof of Theorem~\ref{theo:ftLF-ISS}. Hence, we will omit the detailed proof, and only give a sketch. 

Consider the linear global $\K$-bound $\omega_1 \in \Ki$.
We assume that  $\omega_2\in \K$ is a linear function, too. 
This assumption is only for  simplifying the proof, but does not change the result.
First note that in the proof of Theorem~\ref{theo:ftLF-ISS} we can choose $\nu(s) = hs$ with $h \in (0,1)$, and since $\rho$ and $\sigma$ are linear $\Ki$-functions, we obtain that
$\tilde \gamma (s):=  (\id- \rho)^{-1} \circ \nu^{-1} \circ \sigma(s) = \tfrac{d}{h(1-c)}s$ is a linear function. Furthermore, in the case that $x(k) \not \in \Delta$, we see that 
\begin{equation*} V(x(k+M, \xi, u(\cdot))) \leq (c+h(1-c)) V(x(k, \xi, u(\cdot))).
\end{equation*}
Define $\tilde \kappa:=(c+h(1-c))<1$. In this case using the comparison Lemma~\ref{lem:comparisonexp} we obtain the estimate
\begin{equation*}
V(x(k, \xi, u(\cdot))) \leq  \tfrac{\left(\tilde \kappa^{1/M} \right)^k}{\tilde \kappa}  V_M^{\max}(\xi, u(\cdot)),
\end{equation*}
where $ V_M^{\max}(\xi, u(\cdot)) := \max_{j \in \{0, \ldots, M-1\}} V(x(j, \xi, u(\cdot)))$. 
Let $\omega_1(s):= w_1s$ and $\omega_2(s):= w_2 s$ for $s \in \R_+$ and $w_1,w_2 >0$.
Using Lemma~\ref{lem:boundsexp}, the estimate~\eqref{eq:bounds} is satisfied for $\vartheta_j (s) =  w_1^js$ and $\zeta_j(s)= w_2 \sum_{i=0}^{j-1}w_1^{i} s$.
Thus, 
\begin{equation*}
V(x(j, \xi, u(\cdot))) \!\leq\!  b \left(\dabs{x(j, \xi, u(\cdot))} \right)^\lambda \!\leq\! b \!\left(\!\!w_1^j \dabs{\xi} \!+\! w_2 \sum_{i=0}^{j-1}w_1^{i} \dabsu{u}\!\! \right)^\lambda \!\!\!=\!  b\left(\tilde w_1\dabs{\xi} \!+\! \tilde w_2\dabsu{u} \right)^\lambda
\end{equation*}
with $\tilde w_1 := \max_{j \in \{0, \ldots, M-1\}}w_1^j $, and $\tilde w_2:= \max_{j \in \{0, \ldots, M-1\}} w_2 \sum_{i=0}^{j-1}w_1^{i} $, and hence,  
\begin{align*}
V(x(k, \xi, u(\cdot)))
 & \leq \max_{j \in \{0, \ldots, M-1\}}  \frac{\tilde \kappa^{k/M}}{\tilde \kappa} b \left(w_1^j \dabs{\xi} + w_2 \sum_{i=0}^{j-1}w_1^{i} \dabsu{u} \right)^\lambda\\
 & \leq   \tfrac{b}{\tilde \kappa} \tilde \kappa^{k/M}  \left(\tilde w_1\dabs{\xi} + \tilde w_2\dabsu{u} \right)^\lambda.
\end{align*}
This implies that for all $\xi \in \R^n$ and all $u(\cdot) \in l^\infty (\R^m)$ we have
\begin{align*}
\dabs{x(k, \xi, u(\cdot))}
\leq  \left( a^{-1} V(x(k, \xi, u(\cdot))) \right)^{1/\lambda} 
& \leq \left( \tfrac{b}{a\tilde \kappa} \tilde \kappa^{k/M} \right)^{1/\lambda} \left(\tilde w_1\dabs{\xi} + \tilde w_2\dabsu{u} \right)\\
& \leq \left( \frac{b \tilde \omega_1^{1/\lambda}}{a\tilde \kappa} \right)^\lambda \kappa^k \dabs{\xi} +  \left( \frac{b \tilde \omega_2^{1/\lambda}}{a\tilde \kappa} \right)^\lambda \dabsu{u},
\end{align*}
with $\kappa:= \tilde \kappa^{1/\lambda M} < 1$. 
So, system~\eqref{eq:oas} satisfies \eqref{eq:ISSestimate} with $\beta$ as in \eqref{eq:expISSestimate}, where $C= \left( \frac{b \tilde \omega_1^{1/\lambda}}{a\tilde \kappa} \right)^\lambda $ and $\kappa <1$ as defined above. 
Hence, system~\eqref{eq:oas} is expISS.
\hfill~\end{proof}

While Theorem~\ref{theo:ftLF-ISS} shows the sufficiency of the existence of dissipative finite-step ISS Lyapunov functions to conclude ISS of system~\eqref{eq:oas}, we are now interested in the necessity. 
At this stage we can exploit the fact that any dissipative ISS Lyapunov function as defined in Definition~\ref{def:ISSLF} is a particular dissipative finite-step ISS Lyapunov function satisfying Definition~\ref{def:ftISSLF} with $M=1$. 

\begin{proposition}\label{prop:ISS->ftLF} If system~\eqref{eq:oas} is ISS then there exists a dissipative finite-step ISS Lyapunov function for system~\eqref{eq:oas}.
\end{proposition}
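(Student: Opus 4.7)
The plan is straightforward: invoke a known converse ISS Lyapunov theorem to produce a dissipative ISS Lyapunov function, then observe that such a function is automatically a dissipative finite-step ISS Lyapunov function with $M=1$.

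More concretely, the first step is to apply the converse ISS Lyapunov theorem for discrete-time systems of \cite{JW01-Automatica,LA05-ISS-discrete}; alternatively, in order to accommodate the weaker regularity hypothesis of Assumption~\ref{ass:omega}, one may use the variant of \cite{GK-discontinuousISS} recalled in Remark~\ref{rem:-alpha3}(iii), which does not require continuity of $G$ and which even produces a linear decrease function. Either way, this yields a proper and positive definite function $W:\R^n \to \R_+$ together with $\sigma\in\K$ and a positive definite $\rho$ with $(\id-\rho)\in\Ki$ such that
\begin{equation*}
W(G(\xi,\mu)) \leq \rho(W(\xi)) + \sigma(\dabs{\mu}) \qquad \text{for all } \xi\in\R^n,\;\mu\in\R^m.
\end{equation*}

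The second step is to verify Definition~\ref{def:ftISSLF} with $M=1$. For any $\xi\in\R^n$ and any $u(\cdot)\in l^\infty(\R^m)$, the state at time $1$ is $x(1,\xi,u(\cdot))=G(\xi,u(0))$, so applying the dissipation inequality above with $\mu=u(0)$ and using monotonicity of $\sigma$ together with $\dabs{u(0)}\leq \dabsu{u}$ gives
\begin{equation*}
W(x(1,\xi,u(\cdot))) \leq \rho(W(\xi)) + \sigma(\dabs{u(0)}) \leq \rho(W(\xi)) + \sigma(\dabsu{u}),
\end{equation*}
which is precisely \eqref{eq:decayV} with $M=1$. Hence $W$ is a dissipative finite-step ISS Lyapunov function for system~\eqref{eq:oas}.

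There is essentially no genuine obstacle here: the class of dissipative ISS Lyapunov functions is, by construction, the $M=1$ subclass of dissipative finite-step ISS Lyapunov functions, so the proposition is an immediate consequence of the converse ISS Lyapunov theorem combined with the trivial inequality $\dabs{u(0)}\leq\dabsu{u}$. The only point requiring minor care is to select a version of the converse theorem compatible with Assumption~\ref{ass:omega} rather than full continuity of $G$; the reference to \cite{GK-discontinuousISS} in Remark~\ref{rem:-alpha3}(iii) is the cleanest choice and also ensures that the resulting $\rho$ can be taken linear if desired.
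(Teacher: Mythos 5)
Your proposal is correct and follows essentially the same route as the paper's proof: invoke the converse ISS Lyapunov theorem (\cite{JW01-Automatica} for continuous $G$, \cite{GK-discontinuousISS} for the discontinuous case covered by Assumption~\ref{ass:omega}), then observe that the resulting dissipative ISS Lyapunov function satisfies Definition~\ref{def:ftISSLF} with $M=1$. The only cosmetic difference is that you pass directly through the $\rho$-form via Remark~\ref{rem:-alpha3}(iii), whereas the paper cites \cite[Theorem~1]{JW01-Automatica} in the $-\alpha_3$ form and converts via Remark~\ref{rem:-alpha3}(i), but this is not a substantive divergence.
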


\begin{proof}
If the right-hand side $G: \R^n \times \R^m \rightarrow \R^n$ of system~\eqref{eq:oas} is continuous, then \cite[Theorem~1]{JW01-Automatica} implies the existence of a smooth function $V:\R^n\rightarrow \R_+$ satisfying 
$\alpha_1(\dabs{\xi}) \leq V(\xi) \leq \alpha_2 (\dabs{\xi})$
and
$V(G(\xi, \mu))-V(\xi) \leq - \alpha_3(\dabs{\xi})+\sigma(\dabs{\mu})$
for all $\xi \in \R^n$, $\mu \in \R^m$, and suitable $\alpha_1, \alpha_2, \alpha_3 \in \Ki$, $\sigma \in \K$. Then with Remark~\ref{rem:-alpha3} and $M=1$ it is easy to see that $V$ is a dissipative finite-step ISS Lyapunov function. 
This result also applies to discontinuous dynamics, see \cite[Lemma~2.3]{GK-discontinuousISS}.
\hfill~\end{proof}

Obviously, Proposition~\ref{prop:ISS->ftLF} makes use of the converse ISS
Lyapunov theorem in \cite{JW01-Automatica,GK-discontinuousISS} to
guarantee the existence of a dissipative (finite-step) ISS Lyapunov
function. 

For the case of expISS systems of the form~\eqref{eq:oas}, it turns out that norms are dissipative finite-step ISS Lyapunov functions.

\begin{theorem} \label{theo:converseexpISS}
If system~\eqref{eq:oas} is expISS then the function $V:\R^n \!\to\! \R_+$ defined by
\begin{equation}\label{eq:condV}
V(\xi):= \dabs{\xi}, \qquad \xi \in \R^n
\end{equation}
 is a dissipative finite-step ISS Lyapunov function for system~\eqref{eq:oas}.
\end{theorem}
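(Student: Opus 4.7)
The plan is to read off the finite-step decay directly from the exponential ISS estimate. Since system~\eqref{eq:oas} is expISS, Definition~\ref{def:ISS} combined with~\eqref{eq:expISSestimate} gives constants $C \geq 1$, $\kappa \in [0,1)$, and a function $\gamma \in \K$ such that
\begin{equation*}
\dabs{x(k,\xi,u(\cdot))} \leq C\kappa^k \dabs{\xi} + \gamma(\dabsu{u})
\end{equation*}
for all $k \in \N$, $\xi \in \R^n$ and $u(\cdot) \in l^\infty(\R^m)$. The key observation is that the linear term $C\kappa^k$ can be made strictly less than $1$ by choosing $k$ sufficiently large.

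First I would fix any $c \in (0,1)$ and choose $M \in \N$ large enough that $C\kappa^M \leq c < 1$; such $M$ exists because $\kappa \in [0,1)$ (if $\kappa = 0$ any $M \geq 1$ works). With this $M$ the expISS estimate specialises to
\begin{equation*}
\dabs{x(M,\xi,u(\cdot))} \leq c \dabs{\xi} + \gamma(\dabsu{u}).
\end{equation*}
Setting $V(\xi) := \dabs{\xi}$ and defining $\rho(s) := c s$, $\sigma := \gamma$, this is exactly the inequality~\eqref{eq:decayV}.

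It then remains to verify the structural requirements of Definition~\ref{def:ftISSLF}. The function $V(\xi) = \dabs{\xi}$ is proper and positive definite with $\alpha_1 = \alpha_2 = \id \in \Ki$. The function $\rho(s) = cs$ is continuous with $\rho(0) = 0$ and $\rho(s) > 0$ for $s > 0$, hence positive definite, and since $c < 1$ we have $(\id - \rho)(s) = (1-c)s \in \Ki$. Finally $\sigma = \gamma \in \K$ by the expISS hypothesis.

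There is essentially no obstacle here; the only substantive choice is the selection of $M$, which is immediate from $\kappa \in [0,1)$. The point worth underlining in the write-up is purely conceptual: while classical (one-step) ISS Lyapunov functions typically must be tailored to the dynamics, for expISS systems \emph{any} norm automatically serves as a dissipative finite-step ISS Lyapunov function once one is allowed to wait $M$ steps, with $M$ determined by $C$ and $\kappa$.
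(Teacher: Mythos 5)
Your proof is correct and follows essentially the same route as the paper's: both pick $M$ so that $C\kappa^M<1$, specialise the expISS estimate at $k=M$, and read off $\rho$ linear and $\sigma=\gamma$. The only cosmetic difference is that you introduce an intermediate constant $c$ with $C\kappa^M\leq c<1$, whereas the paper works directly with $\rho(s)=C\kappa^M s$.
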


\begin{proof}
System~\eqref{eq:oas} is expISS if it satisfies \eqref{eq:ISSestimate} with \eqref{eq:expISSestimate} for constants $C\geq 1$ and $\kappa \in [0,1)$. Take $M\in \N$ such that $C\kappa^M<1$, and $V$ as defined in \eqref{eq:condV}. 
Clearly, $V$ is proper and positive definite with $\alpha_1=\alpha_2 = \id \in \Ki$.
On the other hand, for any $\xi \in \R^n$, we have
\begin{align*}
V(x(M,\xi,u(\cdot))) &=  \dabs{x(M,\xi,u(\cdot))} 
\leq   C\kappa^M\dabs{\xi} + \gamma(\dabsu{u}) \\
& =  C\kappa^{M}V(\xi) + \gamma(\dabsu{u})  
=: \rho(V(\xi)) +  \sigma(\dabsu{u})
\end{align*}
where $ \rho(s) :=C\kappa^Ms<s$ for all $s>0$, since $C\kappa^M<1$. Note that $(\id-\rho)(s) = (1-C\kappa^M)s \in \Ki $  and $\sigma:= \gamma \in \Ki$,
which shows~\eqref{eq:decayV}. So $V$ defined in~\eqref{eq:condV} is a dissipative finite-step ISS Lyapunov function for system~\eqref{eq:oas}. 
\hfill~\end{proof}

We emphasize that the hard task in Theorem~\ref{theo:converseexpISS} is finding a sufficiently large $M\in \N$. 
However, Theorem~\ref{theo:converseexpISS} 
suggests to take the norm as a candidate for a dissipative finite-step ISS Lyapunov function. Verification for this candidate function to be a dissipative finite-step ISS Lyapunov function can be done as outlined in the following procedure.

\begin{proc}\label{proc:LFdirect}
Consider system~\eqref{eq:oas} and assume that Assumption~\ref{ass:omega} holds.
\begin{enumerate}
\item[{[1]}] Set $k=1$.
\item[{[2]}] Check 
\begin{equation*}
\dabs{x(k,\xi,u(\cdot))} \leq c \dabs{\xi}+ \sigma(\dabs{u}_{[0,k]})
\end{equation*} 
for all $\xi \in \R^n$, $u(\cdot) \in l^\infty (\R^m)$ with suitable $c \in [0,1)$ and $\sigma \in \Ki$. 
If the inequality holds set $M=k$; else set $k=k+1$ and repeat.
\end{enumerate}
If this procedure is successful, then $V(\xi):=\dabs{\xi}$ is a dissipative finite-step ISS Lyapunov function for system~\eqref{eq:oas}.
By Theorem~\ref{theo:ftLF-expISS} system~\eqref{eq:oas} is expISS. 
\end{proc}

At this point we should mention that the procedure just described is far
from being an algorithm. The question how to really perform item [2],
which should result in global bounds, would
depend on the existence of suitable analytic or numerical bounds which may
or may not exist depending on the system class at hand. For easy examples
this can surely be done by hand. How to address this question
systematically would depend on specific situations and we will not discuss
this issue here. It is clear that in general this is a very hard task.

\section{Relaxed ISS Small-Gain Theorems} \label{sec:sgt}

In this section we consider system~\eqref{eq:oas} split into $N$ subsystems of the form
\begin{equation}\label{eq:subsystem}
x_i(k+1)= g_i(x_1(k), \ldots, x_N(k), u(k)), \qquad k \in \N,
\end{equation}
with $x_i(0) \in \R^{n_i}$ and $g_i:\R^{n_1} \times \ldots \times \R^{n_N} \times \R^m \rightarrow \R^{n_i}$ for $i \in \{1, \ldots, N\}$. We further let $n= \sum_{i=1}^N n_i$, $x=(x_1, \ldots,x_N) \in \R^n$, then with $G=(g_1, \ldots,g_N)$ we call \eqref{eq:oas} the overall system of the subsystems~\eqref{eq:subsystem}.
In this sense~\eqref{eq:subsystem} is what we call a large-scale system.

A typical assumption of classical (Lyapunov-based) small-gain theorems is the requirement that each subsystem~\eqref{eq:subsystem} has to admit an ISS Lyapunov function (see e.g. \cite{JLW04-SGdiscrete,DRW09,LHJ12}). 
This assumption comes from the fact that in small-gain theory the influence of the other subsystems is considered as disturbance. 
This is quite conservative as can be seen from the following linear system
\begin{equation*} x(k+1)= \left( \begin{array}{cc} 1.5 & 1 \\ -2 & -1 \end{array} \right) x(k), \qquad  k \in \N.
\end{equation*}
As the spectral radius of the matrix is equal to $\sqrt{2}/2$
the origin is GAS.
But the first decoupled subsystem $x_1(k+1) = 1.5 x_1(k)$ is unstable. 
So classical small-gain theorems cannot be applied.

The aim of this section is to derive an ISS small-gain theorem which relaxes the assumption of classical (Lyapunov-based) ISS small-gain theorems.
We assume that each system has to admit a Lyapunov-type function that
decreases after a finite number of time steps rather than at each time
step. 
Here it is important to note that these decrease conditions are formulated
for the interconnected system, not for the decoupled system. The latter
is a standard feature of the classical approach.
So the Lyapunov-type functions proposed here do not require 0-GAS of the origin of the subsystems. 

The results in this section are based on the small-gain theorems presented in \cite{GL12-CDC,GLW13-SG} for systems without inputs, and the construction of (finite-step) ISS Lyapunov functions presented in \cite{DRW09}.
The section is divided in two parts. 
In  Section~\ref{subsec:dissSGT}, we prove ISS of system~\eqref{eq:oas} by constructing an overall dissipative finite-step ISS Lyapunov function. 
In Section~\ref{subsec:converseSGT}, we show that the relaxed small-gain theorems derived are necessary at least for expISS systems.

\subsection{Dissipative ISS small-gain theorems}
\label{subsec:dissSGT}

We start with the case that the effect of the external input $u$ can be captured via maximization.

\begin{theorem} \label{theo:SGT-Dissipative_v1}
Let \eqref{eq:oas} be 
given by the interconnection of the subsystems in~\eqref{eq:subsystem}.
Assume that there exist an $M \in \N$, $M \geq 1$, functions $\mathcal V_i:\R^{n_i} \rightarrow \R_+$,  $\gamma_{ij} \in \Ki \cup \{0\}$, $\gamma_{iu} \in \K\cup \{0\}$, and positive definite functions $\delta_i $, with  $d_i:=(\id+\delta_i)\in \Ki$, for $i,j \in \{1, \ldots, N\}$ such that with $\Gamma_{\oplus}$ defined in \eqref{eq:gainmap}, and the diagonal operator $D$ defined by $D= \diag (d_i)$ the following conditions hold.
\begin{enumerate}
\item For all $i\in\{1,\ldots, N\}$, the functions $\mathcal V_i$ are proper and positive definite.
\item For all $\xi = (\xi_1, \ldots, \xi_N)\in \R^n$ with $\xi_i \in \R^{n_i}$, $i \in \{1, \ldots, N\}$ and $u(\cdot) \in l^\infty (\R^m)$ it holds that
\begin{equation*} \left[ \begin{array}{c} \mathcal V_1(x_1(M,\xi,u(\cdot))) \\ \vdots \\ \mathcal V_N(x_N(M, \xi,u(\cdot))) \end{array} \right] \leq 
\max \left\{ \Gamma_\oplus \left( \left[ \begin{array}{c} \mathcal V_1(\xi_1) \\ \vdots \\ \mathcal V_N(\xi_N) \end{array} \right] \right) ,  \left[ \begin{array}{c} \mathcal \gamma_{1u}(\dabsu{u}) \\ \vdots \\ \mathcal \gamma_{Nu}(\dabsu{u}) \end{array} \right] \right\}.
\end{equation*}
\item The \emph{small-gain condition\footnote{Note that the strong small-gain condition in Definition~\ref{Def:SGC} requires the functions $\delta_i$ in the diagonal operator to be of class $\Ki$, whereas here we only require $\delta_i$ to be positive definite.}} 
$\Gamma_\oplus \circ D \not \geq \id$
holds.
\end{enumerate}
Then there exists an $\Omega$-path  $\tilde \sigma \in \Ki^N$ for $\Gamma_\oplus \circ D$. 
Moreover,
if for all $i \in \{1, \ldots, N\}$ there exists a $\Ki$-function $\hat \alpha_i$ satisfying 
\begin{equation}\label{eq:alphahat}
\tilde \sigma_i^{-1}\circ d_i^{-1}\circ \tilde \sigma_i =\tilde \sigma_i^{-1}\circ (\id+\delta_i)^{-1}\circ \tilde \sigma_i = \id - \hat \alpha_i
\end{equation} 
then the function $V: \R^n \to \R_+$ defined by
\begin{equation} \label{eq:LF-dissipative} 
V(\xi):= \max_i (\tilde \sigma_i^{-1} \circ d_i^{-1})(\mathcal V_i(\xi_i)).
\end{equation}
is a dissipative finite-step ISS Lyapunov function for system~\eqref{eq:oas}.
In particular, system~\eqref{eq:oas} is ISS.
\end{theorem}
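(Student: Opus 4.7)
The plan is to construct a dissipative finite-step ISS Lyapunov function $V$ from the $\Omega$-path provided by the strong small-gain condition, verify the hypotheses of Definition~\ref{def:ftISSLF}, and then invoke Theorem~\ref{theo:ftLF-ISS} to conclude ISS of~\eqref{eq:oas}.

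Existence of an $\Omega$-path $\tilde\sigma \in \Ki^N$ for $\Gamma_\oplus \circ D$, i.e.\ $\Gamma_\oplus \circ D \circ \tilde\sigma(r) < \tilde\sigma(r)$ componentwise for every $r>0$, follows from \cite[Theorem~5.2]{DRW09}; the mild point that the cited result is phrased for $\delta_i \in \Ki$ while here $\delta_i$ is merely positive definite is handled by lower-bounding each continuous positive definite $\delta_i$ by a $\Ki$-function, which preserves the strong small-gain inequality. Given $\tilde\sigma$, the function $V$ defined in~\eqref{eq:LF-dissipative} is proper and positive definite because each $\mathcal V_i$ is, $\tilde\sigma_i^{-1} \circ d_i^{-1} \in \Ki$, and the coordinatewise maximum yields $\Ki$-bounds in $\dabs{\xi}$ via the norm equivalence~\eqref{eq:norm}.

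The substantive step is the decay estimate. Set $r := V(\xi)$; by definition of $V$ we have $\mathcal V_i(\xi_i) \leq (d_i \circ \tilde\sigma_i)(r)$ for each $i$. Substituting into hypothesis~(ii) gives
\begin{equation*}
\mathcal V_i(x_i(M, \xi, u(\cdot))) \leq \max\bigl\{\, [\Gamma_\oplus \circ D \circ \tilde\sigma(r)]_i,\ \gamma_{iu}(\dabsu{u}) \,\bigr\}.
\end{equation*}
Applying the monotone map $\tilde\sigma_i^{-1} \circ d_i^{-1}$, distributing it over the maximum, using the $\Omega$-path inequality, and invoking the identity~\eqref{eq:alphahat} yields
\begin{equation*}
(\tilde\sigma_i^{-1}\circ d_i^{-1})\bigl(\mathcal V_i(x_i(M,\xi,u(\cdot)))\bigr) \leq \max\bigl\{(\id - \hat\alpha_i)(r),\ \sigma_i(\dabsu{u})\bigr\},
\end{equation*}
where $\sigma_i := \tilde\sigma_i^{-1}\circ d_i^{-1}\circ \gamma_{iu}$. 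Taking the maximum over $i$, setting $\hat\alpha := \min_i \hat\alpha_i$ and $\sigma := \max_i \sigma_i$, and bounding $\max\{a,b\} \leq a+b$ for $a,b \geq 0$, we arrive at
\begin{equation*}
V(x(M,\xi,u(\cdot))) \leq \rho(V(\xi)) + \sigma(\dabsu{u}), \qquad \rho := \id - \hat\alpha.
\end{equation*}
Routine checks verify that $\hat\alpha \in \Ki$ (minimum of finitely many $\Ki$-functions), that $\rho$ is positive definite because \eqref{eq:alphahat} forces $\hat\alpha_i(s) < s$ for $s>0$, and that $(\id-\rho) = \hat\alpha \in \Ki$. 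Hence $V$ satisfies Definition~\ref{def:ftISSLF}, and Theorem~\ref{theo:ftLF-ISS} delivers ISS of~\eqref{eq:oas}.

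The main obstacle is reconciling the qualitative strict inequality from the $\Omega$-path with the extra $d_i^{-1}$ factor built into $V$: the bare inequality $[\Gamma_\oplus\circ D \circ \tilde\sigma(r)]_i < \tilde\sigma_i(r)$ provides only a pointwise gap, whereas Definition~\ref{def:ftISSLF} demands a quantitative class-$\Ki$ decrease. The identity~\eqref{eq:alphahat} is precisely the accounting tool that converts the strict inequality into the required $\Ki$ gap $\hat\alpha$, which is exactly why this hypothesis is stated explicitly in the theorem rather than derived automatically from the small-gain condition.
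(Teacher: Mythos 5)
Your argument is correct and follows the paper's route: construct $V$ from the $\Omega$-path, check properness and positive definiteness via the norm equivalence, and use~\eqref{eq:alphahat} to convert the strict $\Omega$-path inequality into a quantitative $\Ki$-decrease before invoking Theorem~\ref{theo:ftLF-ISS}. The only genuine difference is cosmetic: you substitute $\mathcal V_j(\xi_j) \le d_j\circ\tilde\sigma_j(V(\xi))$ up front and apply the $\Omega$-path inequality in one stroke, whereas the paper inserts telescoping factors $\tilde\sigma_i\circ\tilde\sigma_i^{-1}$ and $d_j\circ d_j^{-1}$ to isolate the three contributions; both organize the same estimate.

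One side remark in your write-up is wrong, though harmless. You claim the existence of the $\Omega$-path is rescued by lower-bounding each positive definite $\delta_i$ by a $\Ki$-function. This does not work: a positive definite $\delta_i$ with $d_i=\id+\delta_i\in\Ki$ can be bounded (e.g.\ $\delta_i(s)=1-e^{-s}$), so no $\Ki$ lower bound exists; and even when one does, replacing $D$ by a smaller $D'$ weakens the operator, so an $\Omega$-path for $\Gamma_\oplus\circ D'$ need not satisfy the strict inequality for $\Gamma_\oplus\circ D$. The actual reason DRW09's result applies is simpler: since $d_j\in\Ki$, the composition $\Gamma_\oplus\circ D$ has components $\max_j(\gamma_{ij}\circ d_j)([s]_j)$ and is thus itself a gain operator of the form~\eqref{eq:gainmap} with $\Ki$-gains $\gamma_{ij}\circ d_j$, so the ordinary small-gain condition $\Gamma_\oplus\circ D\not\geq\id$ yields the $\Omega$-path directly.
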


Let us briefly discuss the differences in the assumptions of Theorem~\ref{theo:SGT-Dissipative_v1} when compared to the small-gain theorem for
the maximization case e.g. from \cite{DRW09}. First there is of course the
finite-step condition (ii), but this is not surprising. What is
conceptually new is that a strong small-gain condition $\Gamma_\oplus
\circ D \not \geq \id$ is required and in addition we need the existence of the
functions $\hat \alpha_i$ satisfying \eqref{eq:alphahat}. This is by no
means automatic. If ISS of all subsystems is assumed as in~\cite{DRW09}
then in the maximization case the condition $\Gamma_\oplus
\not \geq \id$ is sufficient. 
In our proof we use the stronger condition and we suspect
  that this extra robustness required is not merely a result of our
  technique of proof. Rather, it is essential to deal with possibly unstable
  subsystems.  We explicitly point out the step where the new condition
\eqref{eq:alphahat} is needed in the proof.

\begin{proof}
Assume that $\mathcal V_i$ and $\gamma_{ij},\gamma_{iu}$ satisfy the hypothesis of the theorem. 
Denote $\gamma_u(\cdot) := (\gamma_{1u}(\cdot), \ldots, \gamma_{Nu}(\cdot))^\top$. 
Then from condition~(iii) and \cite[Theorem~5.2-(iii)]{DRW09} it follows that there exists an $\Omega$-path $\tilde \sigma \in \Ki^N$ 
such that 
\begin{equation*}
 (\Gamma_\oplus \circ D)(\tilde \sigma(s))<\tilde \sigma(s)
\end{equation*} 
holds for all $s>0$.
In particular,
\begin{equation}\label{eq:pathmax-equi}
\max_{i,j \in \{1, \ldots, N\}} \tilde \sigma_i^{-1} \circ \gamma_{ij} \circ d_j \circ \tilde \sigma_j<\id.
\end{equation}
In the following let $i,j \in \{1, \ldots, N\}$.
Let $V:\R^n\rightarrow \R_+$ be defined as in~\eqref{eq:LF-dissipative}.
The aim is to show that $V$ is a dissipative finite-step ISS Lyapunov function for the overall system~\eqref{eq:oas}. 
Recall that condition~(i)
implies the existence of $\alpha_{1i},\alpha_{2i}\in \Ki$ such that for all $\xi_i \in \R^{n_i}$ we have $\alpha_{1i}(\dabs{\xi_i}) \leq \mathcal V_i(\xi_i) \leq \alpha_{2i}(\dabs{\xi_i})$.
Thus,
\begin{equation*} 
V(\xi) \geq \max_i (\tilde \sigma_i^{-1}\circ d_i^{-1}) (\alpha_{1i}(\dabs{\xi_i})) \geq \alpha_1(\dabs{\xi})
\end{equation*}
with $\alpha_1:= \min_{j} \tilde \sigma_j^{-1}\circ d_j^{-1} \circ \alpha_{1j}\circ  \tfrac1\kappa  \id \in \Ki$, where $\kappa\geq 1$ comes from~\eqref{eq:norm}. 
On the other hand we have
\begin{equation*} 
V(\xi) \leq \max_i (\tilde \sigma_i^{-1}\circ d_i^{-1}) (\alpha_{2i}(\dabs{\xi_i})) \leq \alpha_2(\dabs{\xi})
\end{equation*}
with $\alpha_2 := \max_i \tilde \sigma_i^{-1}\circ d_i^{-1} \circ \alpha_{2i} \in \Ki$, which shows $V$ defined in~\eqref{eq:LF-dissipative} is proper and positive definite.
To show the decay of $V$, i.e., an inequality of the form~\eqref{eq:decayV},  we define $\sigma:= \max_i \tilde \sigma_i^{-1} \circ d_i^{-1}\circ \gamma_{iu}$, and obtain
\begin{align*}
V(x(M,\xi, u(\cdot))) &= \max_i (\tilde \sigma_i^{-1} \circ d_i^{-1})(\mathcal V_i(x_i(M,\xi, u(\cdot)))) \\
&\hspace{-2.5cm}\stackrel{\text{cond. }(ii)}{\leq} \max_{i} (\tilde \sigma_i^{-1} \circ d_i^{-1})   \left( \max \left \{ \max_j \gamma_{ij}(\mathcal V_j(\xi_j)), \gamma_{iu}(\dabsu{u}) \right\} \right)\\
&\hspace{-2cm}=\max \left \{ \max_{i,j} \tilde \sigma_i^{-1} \circ d_i^{-1} \circ \gamma_{ij} (\mathcal V_j(\xi_j)), \max_i \tilde \sigma_i^{-1} \circ d_i^{-1} \circ \gamma_{iu}(\dabsu{u}) \right\}\\
&\hspace{-2cm}\leq \max \bigg\{ \max_{i,j} \underbrace{\left(\tilde \sigma_i^{-1} \!\circ\! d_i^{-1} \!\circ\! \tilde \sigma_i\right)}_{=\id-\hat \alpha_i \ \text{by \eqref{eq:alphahat}}}\circ \underbrace{\left(\tilde \sigma_{i}^{-1} \!\circ\! \gamma_{ij} \!\circ\! d_j \!\circ\! \tilde \sigma_j\right)}_{<\id \ \text{by \eqref{eq:pathmax-equi}}} \circ \underbrace{\left(\tilde \sigma_{j}^{-1} \!\circ\! d_{j}^{-1} \!\circ\! \mathcal V_{j}(\xi_{j})\right)}_{\leq V(\xi)}, \sigma(\dabsu{u}) \bigg\}
\\
&\hspace{-2cm}< \max \left \{ \max_{i} (\id-\hat \alpha_i)(V(\xi)), \sigma(\dabsu{u}) \right\}.
\end{align*}
Define $\rho:= \max_i (\id-\hat \alpha_i)$, then $\rho\in \Ki$ by~\eqref{eq:alphahat}, and satisfies $\id-\rho=\min_i \hat \alpha_i \in \Ki$.
Noticing that the maximum can be upper bounded by summation, this shows that $V$ is a dissipative finite-step ISS Lyapunov function as defined in Definition~\ref{def:ftISSLF}. 
Then from Theorem~\ref{theo:ftLF-ISS} we conclude that system~\eqref{eq:oas} is ISS.
\hfill~\end{proof}

\begin{remark}\label{rem:nonISSsubsystems}
(i)~To understand the assumptions imposed in Theorem~\ref{theo:SGT-Dissipative_v1} consider the case that $M=1$ and $\delta_i \in \Ki$, $i\in \{1, \ldots, N\}$.
First, by condition~(ii), we have for any $i \in \{1, \ldots,N\}$
\begin{equation}\label{eq:SGT-M=1}
\mathcal V_i(x_i(1,\xi,u(\cdot))) \leq \max \left\{ \max_{j \in \{1, \ldots, N\}} \gamma_{ij} (\mathcal V_j(\xi_j)), \gamma_{iu}{\dabsu{u}} \right\}.
\end{equation}
From the small-gain condition~(iii) we conclude that $\gamma_{ii} \circ (\id+\delta_i)<\id$ by considering the $i$th unit vector. 
Hence, since $\delta_i \in \Ki$, we have  $\gamma_{ii}<(\id+\delta_i)^{-1}= \id-\hat \delta_i$  with $\hat \delta_i \in \Ki$, where the last equality follows from \cite[Lemma~2.4]{Ruf09Monotone}. Thus, we can write~\eqref{eq:SGT-M=1} as
\begin{equation*}
\mathcal V_i(x_i(1,\xi,u(\cdot))) \leq (\underbrace{\id-\hat \delta_i}_{=:\rho_i})(\mathcal V_i(\xi_i)) + 
\sum_{\substack{j=1\\j\neq i}}^N \gamma_{ij} (\mathcal V_j(\xi_j))
+ \gamma_{iu}{\dabsu{u}}.
\end{equation*}
Together with condition~(i) this implies that the functions $\mathcal V_i$ are dissipative ISS Lyapunov functions for the subsystem~\eqref{eq:subsystem} with respect to both internal and external inputs.
Therefore, if $M=1$ and $\delta_i \in \Ki$ then Theorem~\ref{theo:SGT-Dissipative_v1} is a dissipative small-gain theorem for discrete-time systems in the classical sense, see also Remark~~\ref{rem:references}.

(ii)~If $M=1$ and the functions $\delta_i$ are only positive definite then the functions $\mathcal V_i$ are not necessarily dissipative ISS Lyapunov functions, as we cannot ensure that the decay of $\mathcal V_i$ in terms of the function $\rho_i$ satisfies $\id-\rho_i\in \Ki$. 
Thus, in Theorem~\ref{theo:SGT-Dissipative_v1}, even in the case $M=1$, we do not necessarily assume that the subsystems are ISS.

(iii)~Now consider the case $M>1$.
In Theorem~\ref{theo:SGT-Dissipative_v1}, the internal inputs $x_j$ may have an stabilizing effect on system $x_i$ in the first $M$ time steps, whereas the external input $u$ is considered as a disturbance.
Thus, the subsystems do not have to be ISS, while the overall system is ISS.
This observation is essential as it extends the classical idea of small-gain theory.
In particular, the subsystems~\eqref{eq:subsystem} can be $0$-input unstable, i.e., the origin of the system $x_i(k+1)=g_i(0,\ldots, 0,x_i(k),0, \ldots, 0)$ can be unstable. See also Section~\ref{sec:example} devoted to the discussion of an example.
\end{remark}

In the following example we show that condition~\eqref{eq:alphahat}, which quantifies the robustness given by the scaling matrix $D$, is not trivially satisfied, even if $\delta_i \in \Ki$.
 
\begin{example} 
Consider the functions
\begin{equation*}
\tilde \sigma(s):=e^s-1, \quad \tilde \sigma^{-1}(s)= \log (s+1), \quad 
\hat \delta(s)=(s+1)\left(1-\left(\frac{1}{s+1}\right)^{\tfrac{1}{s+1}}\right).
\end{equation*}
It is not hard to see that $\tilde \sigma, \tilde \sigma^{-1} \in\Ki$. 
Moreover, by~\cite[Appendix~A.3]{Geiselhart-Diss}, also $\hat \delta \in \Ki$ and $ (\id-\hat \delta) \in \Ki$.
Similarly as in \cite[Lemma~2.4]{Ruf09Monotone}, there exists\footnote{Note that \cite[Lemma~2.4]{Ruf09Monotone} argues that if $\delta\in \Ki$ is given, there exists a suitable $\hat \delta \in \Ki$ satisfying $(\id+ \delta)^{-1} = \id - \hat \delta$. 
Conversely, for any $\hat \delta \in \Ki$ with $(\id-\hat \delta) \in \Ki$ the existence of a $\delta \in \Ki$ satisfying $(\id-\delta)^{-1} = \id- \hat \delta$
follows by defining $\delta=\hat \delta\circ(\id-\hat \delta)^{-1}\in \Ki$.} a function $\delta \in \Ki$ such that $(\id+\delta)^{-1}=\id-\hat \delta$. 
Hence, we have for all $s \in \R_+$
\begin{equation*}
\tilde \sigma^{-1} \circ(\id+\delta)^{-1}\circ \tilde \sigma(s) = \tilde \sigma^{-1} \circ(\id-\hat\delta)\circ \tilde \sigma(s) = s(1-e^{-s}).
\end{equation*}
As $\lim_{s \to \infty} s (1-e^{-s})-s = 0$, there cannot exist a $\Ki$-function $\hat \alpha$ satisfying~\eqref{eq:alphahat}.
\end{example}

In condition~(ii) of Theorem~\ref{theo:SGT-Dissipative_v1} the effect of internal and external inputs is captured via maximization.
Next, we replace the maximum in condition~(ii) of Theorem~\ref{theo:SGT-Dissipative_v1} by a sum.
Note that in the case of summation, the small-gain condition invoked in Theorem~\ref{theo:SGT-Dissipative_v1} is not strong enough to ensure that $V$ defined in~\eqref{eq:LF-dissipative} is a dissipative finite-step ISS Lyapunov function (see \cite{DRW09}), so we also have to change condition~(iii) of Theorem~\ref{theo:SGT-Dissipative_v1}. 
In particular, we  assume that the functions $\delta_i $ are of class $\Ki$, and not only positive definite.
We recall from Section~\ref{subsec:SGC} that if the diagonal operator $D=\diag(\id+ \delta_i)$ is factorized into 
\begin{equation}\label{eq:Dsplit}
D=D_{2}\circ D_{1}, \qquad D_j=\diag(\id+\delta_{ij}),\, \delta_{ij} \in \Ki, \, i\in \{1, \ldots,N\}, \, j \in \{1,2\}
\end{equation}
then $D\circ \Gamma_\oplus \not \geq \id$ is equivalent to $D_1 \circ \Gamma_\oplus \circ D_2 \not \geq \id$.

\begin{theorem} \label{theo:SGT-Dissipative_v2}
Let~\eqref{eq:oas} be 
given by the interconnection of the subsystems in~\eqref{eq:subsystem}.
Let $\delta_i,\delta_{i_1},\delta_{i_2}\in\Ki$ for $i\in\{1, \ldots,N\}$ and $D:= \diag (d_i) := \diag (\id +\delta_i)$ satisfy~\eqref{eq:Dsplit}.
Assume that there exist an $M \in \N$, $M \geq 1$, functions $\mathcal V_i:\R^{n_i} \rightarrow \R_+$,  
$\gamma_{ij} \in \Ki \cup \{0\}$, and $\gamma_{iu} \in \K\cup \{0\}$ for $i,j \in \{1, \ldots, N\}$ such that with $\Gamma_{\oplus}$ defined in~\eqref{eq:gainmap} the following conditions hold.
\begin{enumerate}
\item For all $i\in\{1,\ldots, N\}$, the functions $\mathcal V_i$ are proper and positive definite.
\item For all $\xi \in \R^n$ and $u(\cdot) \in l^\infty (\R^m)$ it holds that
\begin{equation*} \left[ \begin{array}{c} \mathcal V_1(x_1(M,\xi,u(\cdot))) \\ \vdots \\ \mathcal V_N(x_N(M, \xi,u(\cdot))) \end{array} \right] \leq 
 \Gamma_\oplus \left( \left[ \begin{array}{c} \mathcal V_1(\xi_1) \\ \vdots \\ \mathcal V_N(\xi_N) \end{array} \right] \right) +  \left[ \begin{array}{c} \mathcal \gamma_{1u}(\dabsu{u}) \\ \vdots \\ \mathcal \gamma_{Nu}(\dabsu{u}) \end{array} \right] .
\end{equation*}
\item The \emph{strong small-gain condition}  
$D\circ \Gamma_\oplus \not \geq \id$ 
is satisfied.
\end{enumerate}
Then  there exists an $\Omega$-path $\tilde \sigma \in \Ki^N$ for $D_1\circ \Gamma_\oplus \circ D_2$.
Moreover, if for all $i \in \{1, \ldots, N\}$ there exists a $\Ki$-function $\hat \alpha_i$ satisfying
\begin{equation}\label{eq:alphahat2}
\tilde \sigma_i^{-1}\circ d_{i2}^{-1}\circ \tilde \sigma_i= \id - \hat \alpha_i
\end{equation} 
 then the function $V: \R^n \rightarrow \R_+$ defined by 
\begin{equation}\label{eq:LF-dissipative2}
V(\xi):= \max_{i} (\tilde \sigma_i^{-1}\circ d_{i2}^{-1})(\mathcal V_i(\xi_i))
\end{equation}
is a dissipative finite-step ISS Lyapunov function for system~\eqref{eq:oas}.
In particular, system~\eqref{eq:oas} is ISS.
\end{theorem}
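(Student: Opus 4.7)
The approach mirrors that of Theorem~\ref{theo:SGT-Dissipative_v1}, with two modifications dictated by the fact that (ii) now uses summation and (iii) requires the strong small-gain condition. The extra robustness provided by the factor $D_1$ in the factorization $D=D_2\circ D_1$ is precisely what will allow us to convert sums back into maxima.

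The first step is routine: since $D\circ \Gamma_\oplus \not\geq \id$ is equivalent to $D_1\circ \Gamma_\oplus \circ D_2 \not\geq \id$ (see Section~\ref{subsec:SGC}), \cite[Theorem~5.2]{DRW09} yields an $\Omega$-path $\tilde\sigma \in \Ki^N$ for $D_1\circ \Gamma_\oplus \circ D_2$; component-wise this reads $d_{i1}\circ \gamma_{ij}\circ d_{j2}\circ \tilde\sigma_j < \tilde\sigma_i$ for all $i,j$. Properness and positive definiteness of $V$ from \eqref{eq:LF-dissipative2} follow verbatim from the corresponding estimates in the proof of Theorem~\ref{theo:SGT-Dissipative_v1}, using condition~(i).

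The decisive step is the decrease estimate. Starting from the summation bound in (ii), I would apply the elementary inequality
\[
a+b \leq \max\{d_{i1}(a),(\id+\delta_{i1}^{-1})(b)\}, \qquad a,b\geq 0,
\]
which follows from a case distinction on whether $b\leq \delta_{i1}(a)$, with $a=\max_j \gamma_{ij}(\mathcal V_j(\xi_j))$ and $b=\gamma_{iu}(\dabsu{u})$. Composing with $\tilde\sigma_i^{-1}\circ d_{i2}^{-1}$ and using $\mathcal V_j(\xi_j)\leq d_{j2}\circ\tilde\sigma_j(V(\xi))$ (which is just the definition of $V$), the internal-input branch becomes the chain $\tilde\sigma_i^{-1}\circ d_{i2}^{-1}\circ d_{i1}\circ\gamma_{ij}\circ d_{j2}\circ\tilde\sigma_j(V(\xi))$; the inner block $d_{i1}\circ\gamma_{ij}\circ d_{j2}\circ\tilde\sigma_j$ is strictly dominated by $\tilde\sigma_i$ by the $\Omega$-path property, and the surviving $\tilde\sigma_i^{-1}\circ d_{i2}^{-1}\circ\tilde\sigma_i$ equals $\id-\hat\alpha_i$ by \eqref{eq:alphahat2}. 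The external-input branch is collected into $\sigma(\dabsu{u}):=\max_i \tilde\sigma_i^{-1}\circ d_{i2}^{-1}\circ(\id+\delta_{i1}^{-1})\circ\gamma_{iu}(\dabsu{u})$. Taking the outer maximum over $i$, setting $\rho:=\max_i(\id-\hat\alpha_i)$ so that $\id-\rho=\min_i\hat\alpha_i\in\Ki$, and bounding $\max\{\rho,\sigma\}\leq\rho+\sigma$, we recover \eqref{eq:decayV}; Theorem~\ref{theo:ftLF-ISS} then gives ISS.

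The main obstacle will be making the three ingredients mesh telescopically: the sum-to-max inequality must inject exactly $d_{i1}$ so that it absorbs with the $\Omega$-path relation for $D_1\circ \Gamma_\oplus\circ D_2$, while the outer normalization $\tilde\sigma_i^{-1}\circ d_{i2}^{-1}$ must leave behind the block $\tilde\sigma_i^{-1}\circ d_{i2}^{-1}\circ\tilde\sigma_i$ on which \eqref{eq:alphahat2} acts. This is exactly what the splitting $D=D_2\circ D_1$ is designed for. Hypothesis \eqref{eq:alphahat2} is indispensable here: it is what upgrades $\id-\rho$ from merely positive definite to class $\Ki$, as required by Definition~\ref{def:ftISSLF} and Theorem~\ref{theo:ftLF-ISS}.
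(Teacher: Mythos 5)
Your proof is correct, but it handles the summation differently from the paper. You apply the weak triangle inequality
\[
a+b\;\leq\;\max\bigl\{(\id+\delta_{i1})(a),\,(\id+\delta_{i1}^{-1})(b)\bigr\},\qquad a,b\geq 0,
\]
directly to the bound of condition~(ii), converting the sum into a maximum \emph{before} touching the $\Omega$-path; the factor $d_{i1}$ this injects onto the internal-input branch then cancels telescopically against the $\Omega$-path inequality $\tilde\sigma_i^{-1}\circ d_{i1}\circ\gamma_{ij}\circ d_{j2}\circ\tilde\sigma_j<\id$, leaving exactly the block $\tilde\sigma_i^{-1}\circ d_{i2}^{-1}\circ\tilde\sigma_i=\id-\hat\alpha_i$ on which \eqref{eq:alphahat2} acts, and the external branch gives $\sigma:=\max_i\tilde\sigma_i^{-1}\circ d_{i2}^{-1}\circ(\id+\delta_{i1}^{-1})\circ\gamma_{iu}\in\K$. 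The paper instead works at the level of the $\Omega$-path inequality itself: it expands $d_{i1}=\id+\delta_{i1}$ there to expose a slack term $\max_j\delta_{i1}\circ\gamma_{ij}\circ d_{j2}\circ\tilde\sigma_j$, constructs a function $\varphi\in\Ki$ (by a case distinction on whether $\gamma_{ij}\equiv 0$ for the row $i$) so that $\gamma_{iu}\circ\varphi$ fits inside this slack, arriving at the estimate $\max_{i,j}\tilde\sigma_i^{-1}\bigl(\gamma_{ij}\circ d_{j2}\circ\tilde\sigma_j(r)+\gamma_{iu}\circ\varphi(r)\bigr)<r$, and then evaluates this at $r=\max\{V(\xi),\varphi^{-1}(\dabsu{u})\}$ to get the decrease. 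Both arguments are valid and yield the same structural conclusion; your route is more modular and avoids the ad hoc case split for $\varphi$, while the paper's route produces a slightly more explicit input gain $\sigma=\max_i(\id-\hat\alpha_i)\circ\varphi^{-1}$ as a by-product of the $\varphi$-construction. You also correctly identify the role of \eqref{eq:alphahat2} (guaranteeing $\id-\rho=\min_i\hat\alpha_i\in\Ki$) and of the factorization $D=D_2\circ D_1$, which is precisely the point emphasized in the paper's surrounding discussion.
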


Again note the difference in the assumptions when compared to the
small-gain theorem for the summation case e.g. from \cite{DRW09}. Now a
factorization property for the operator $D$ is required. This imposes
extra conditions as discussed in Remark~\ref{Geiselhart-Diss}. Again these
extra conditions appear to be necessary
to treat the potential instability of subsystems.

\begin{proof} 
In Section~\ref{subsec:SGC} we have shown  that $D \circ \Gamma_\oplus \not \geq \id$ if and only if  $D_1 \circ \Gamma_\oplus \circ D_2 \not \geq \id$.
By~\cite[Theorem~5.2-(iii)]{DRW09} it follows that there exists an $\Omega$-path  $\tilde \sigma \in \Ki^N$  for $D_1 \circ \Gamma_\oplus \circ D_2$ satisfying
\begin{equation*}
(D_1 \circ \Gamma_\oplus \circ D_2)(\tilde \sigma(r))<\tilde \sigma(r) \qquad \forall r>0,
\end{equation*}
or, equivalently, for all  $i \in \{1, \ldots, N\} $,
\begin{equation}\label{eq:pathinequality1}
\max_{j \in \{1, \ldots, N\}} d_{i1} \circ \gamma_{ij} \circ d_{j2} \circ \tilde \sigma_j(r)< \tilde \sigma_i(r) \qquad \forall r>0.\end{equation}
We show that this inequality implies the existence of a function $\varphi \in \Ki$ such that
 for all $r>0$, we have
\begin{equation} \label{eq:pathestimate}
\max_{i,j} \tilde  \sigma_i^{-1} \circ \left( \gamma_{ij}\circ d_{j2} \circ \tilde \sigma_j (r) + \gamma_{iu}\circ \varphi (r) \right) <r .
\end{equation}
To do this, we assume, without loss of generality\footnote{
If $\gamma_{iu} \in \K \backslash \Ki$ take any $\Ki$-function upper bounding $\gamma_{iu}$. If $\gamma_{iu}=0$, take e.g. $\gamma_{iu}=\id$.
}, that $\gamma_{iu} \in \Ki$.
Since $d_{i1} = \id + \delta_{i1}$ with $\delta_{i1}\in \Ki$ for all $i \in \{1, \ldots, N\}$, we can write~\eqref{eq:pathinequality1} as
\begin{equation}\label{eq:pathinequality2}
\max_{j \in \{1, \ldots, N\}} \gamma_{ij} \circ d_{j2} \circ \tilde \sigma_j(r) +
\max_{j \in \{1, \ldots, N\}} \delta_{i1} \circ \gamma_{ij} \circ d_{j2} \circ \tilde \sigma_j(r)< \tilde \sigma_i(r).
\end{equation}
Let $i \in \{1, \ldots, N\}$. We consider two cases:
\begin{enumerate}
\item If $\gamma_{ij}=0$ for all $j \in \{1, \ldots, N\}$, define
\begin{equation*}
\varphi_i := \tfrac12 \gamma_{iu}^{-1} \circ \tilde \sigma_i \in \Ki.
\end{equation*}
\item If $\gamma_{ij} \in \Ki$ for at least one $j\in \{1, \ldots, N\}$, define
\begin{equation*}
\varphi_i:= \max_{j \in \{1, \ldots, N\}} \gamma_{iu}^{-1} \circ \delta_{i1} \circ \gamma_{ij} \circ d_{j2} \circ \tilde \sigma_j \in \Ki.
\end{equation*}
Note that we need $\delta_{i1} \in \Ki$ for $\varphi_i$ to be of class $\Ki$, as opposed to proof of Theorem~\ref{theo:SGT-Dissipative_v1}, where we only needed positive definiteness.
\end{enumerate}
For both cases, the definition of $\varphi_i \in \Ki$ together with~\eqref{eq:pathinequality2} implies
\begin{equation*}
\max_{j \in \{1, \ldots, N\}} \gamma_{ij} \circ d_{j2} \circ \tilde \sigma_j(r) +
\gamma_{iu} \circ \varphi_i(r)< \tilde \sigma_i(r).
\end{equation*}
for all $r >0$.
Then it is not hard to see that $\varphi:= \min_{i \in \{1, \ldots, N\}} \varphi_i \in \Ki$ satisfies~\eqref{eq:pathestimate} for all $r>0$.
 
In the following let $i,j \in \{1, \ldots, N\}$.
Consider the function $V$ from~\eqref{eq:LF-dissipative2}. 
First note that $V$ is proper and positive definite, which follows directly from the proof of Theorem~\ref{theo:SGT-Dissipative_v1}.
To show the decay of $V$, i.e., an inequality of the form~\eqref{eq:decayV}, we use~\eqref{eq:pathestimate}, and obtain
\begin{align*}
V(x(M,\xi, u(\cdot))) &= \max_i (\tilde \sigma_i^{-1} \circ d_{i2}^{-1})(\mathcal V_i(x_i(M,\xi, u(\cdot)))) \\
&\hspace{-2.3cm}\stackrel{\text{cond. }(ii)}{\leq} \max_{i} (\tilde \sigma_i^{-1} \circ d_{i2}^{-1})   \left(  \max_j \gamma_{ij}(\mathcal V_j(\xi_j))+ \gamma_{iu}(\dabsu{u})\right)\\
&\hspace{-2.1cm}\stackrel{\eqref{eq:alphahat2}}{=}  \max_{i,j} (\id-\hat\alpha_i)\circ \tilde \sigma_i^{-1} \left( \gamma_{ij} (\mathcal V_j(\xi_j))+ \gamma_{iu}(\dabsu{u}) \right)\\
&\hspace{-1.9cm}= \max_{i,j} (\id-\hat\alpha_i)\!\circ\! \tilde \sigma_i^{-1}\!\! \left(\! \gamma_{ij}\!\circ\! d_{j2}\!\circ\! \tilde \sigma_j \!\circ\! \underbrace{\left(\tilde \sigma_{j}^{-1} \!\circ\! d_{j2}^{-1} \circ \mathcal V_{j}(\xi_{j'})\right)}_{ \leq V(\xi)} + \gamma_{iu}\!\circ\! \varphi \!\circ\! \varphi^{-1}(\dabsu{u}) \right)\\
&\hspace{-2.1cm}\stackrel{\eqref{eq:pathestimate}}{<} \max_{i} (\id-\hat\alpha_i) \left(\max\{V(\xi), \varphi^{-1}(\dabsu{u})\} \right)\\
&\hspace{-1.9cm}\leq \max_{i} (\id-\hat\alpha_i) (V(\xi)) + \max_{i} (\id-\hat\alpha_i) ( \varphi^{-1}(\dabsu{u})).
\end{align*}
Define $\rho:= \max_i (\id-\hat \alpha_i)$ and $\sigma:=\max_{i} (\id-\hat \alpha_i) \circ \varphi^{-1}$, then $\id-\rho=\min_{i} \hat \alpha_i \in \Ki$. Hence,~\eqref{eq:decayV} is satisfied.
Again, as in the proof of Theorem~\ref{theo:SGT-Dissipative_v1}, this shows that $V$ is a dissipative finite-step ISS Lyapunov function as defined in Definition~\ref{def:ftISSLF}. 
From Theorem~\ref{theo:ftLF-ISS} we conclude that system~\eqref{eq:oas} is ISS.
\hfill~\end{proof}

\begin{remark}\label{rem:references}
If Theorem~\ref{theo:SGT-Dissipative_v1} (resp. Theorem~\ref{theo:SGT-Dissipative_v2}) is satisfied with $M=1$, then the dissipative finite-step ISS Lyapunov function $V$ in \eqref{eq:LF-dissipative} (resp. \eqref{eq:LF-dissipative2}) is a dissipative ISS Lyapunov function.
In particular, we obtain the following special cases: 
If Theorem~\ref{theo:SGT-Dissipative_v2} is satisfied for $M=1$ then this gives a dissipative-form discrete-time version of \cite[Corollary~5.6]{DRW09}.
On the other hand, for $M=1$, Theorem~\ref{theo:SGT-Dissipative_v1} includes the ISS variant of \cite[Theorem~3]{JLW08-CCC} as a special case.
\end{remark}

\begin{remark}
\label{Geiselhart-Diss}
Whether or not condition~\eqref{eq:alphahat2} in
Theorem~\ref{theo:SGT-Dissipative_v1} is satisfied 
depends on the factorization~\eqref{eq:Dsplit}.
Let $D_1,D_2$ as well as $\hat D_1, \hat D_2$ be two compositions of $D$ as in~\eqref{eq:Dsplit}, i.e., $D_2 \circ D_1 = D = \hat D_2 \circ \hat D_1$.
A direct computation shows that if $\tilde \sigma \in \Ki^N$ is an $\Omega$-path for $D_1 \circ \Gamma_{\oplus} \circ D_2$ then $\hat \sigma:= \hat D_1 \circ D_1^{-1} \circ \tilde \sigma \in \Ki^N$ is an $\Omega$-path for $\hat D_1 \circ \Gamma_{\oplus} \circ \hat D_2$.
Moreover, if we assume that~\eqref{eq:alphahat2} holds
for $D_1,D_2$,
then we have
\begin{align*}
\hat \sigma_i^{-1} \circ \hat d_{i2}^{-1} \circ \hat  \sigma_i
& = (\tilde \sigma_i^{-1} \circ d_{i2}^{-1} \circ \tilde  \sigma_i ) \circ (\tilde \sigma_i^{-1} \circ \hat d_{i1} \circ d_{i1}^{-1} \circ \tilde \sigma_i) \\
& = (\id - \hat \alpha_i) \circ (\tilde \sigma_i^{-1} \circ \hat d_{i1} \circ d_{i1}^{-1} \circ \tilde \sigma_i)
\end{align*}
with $\hat \alpha_i \in \Ki$, $i \in \{1, \ldots, N\}$. 
Unfortunately, from this equation  we cannot conclude that a condition of the form~\eqref{eq:alphahat2} holds for the decomposition $\hat D_1, \hat D_2$ and the $\Omega$-path $\hat \sigma$ as defined above.

An approach for finding a suitable decomposition is the following. Consider the special case of the decomposition~\eqref{eq:Dsplit} with $D_2 = \diag(\id)$ and $D_1 = D= \diag(d_i)$.
Let $\tilde \sigma \in \Ki^N$ be an $\Omega$-path for $D_1 \circ \Gamma_\oplus \circ D_2 = D \circ \Gamma_\oplus$.
Following the same steps as above, we see that $\hat \sigma = \hat D_1 \circ D^{-1} \circ \tilde \sigma \in \Ki^N$ is an $\Omega$-path for $\hat D_1 \circ \Gamma_\oplus \circ \hat D_2$.
Moreover, as $D= \hat D_2 \circ \hat D_1$ implies $d_i^{-1} = \hat d_{i1}^{-1} \circ \hat d_{i2}^{-1}$, we see that
\begin{align*}
\hat \sigma_i^{-1} \circ \hat d_{i2}^{-1} \circ \hat  \sigma_i
& = (\tilde \sigma_i^{-1} \circ d_i \circ \hat d_{i1}^{-1}) \circ ( \hat d_{i1} \circ d_i^{-1}) \circ (\hat d_{i1} \circ d_{i}^{-1} \circ \tilde \sigma_i) \\
& = (\tilde \sigma_i^{-1} \hat d_{i1} \circ d_{i}^{-1} \circ \tilde \sigma_i) \\
& = \tilde \sigma_i^{-1} \hat d_{i2}^{-1} \circ \tilde \sigma_i.
\end{align*}
Hence, a condition of the form~\eqref{eq:alphahat2} holds for the decomposition $\hat D_1, \hat D_2$ with $\Omega$-path $\hat \sigma \in \Ki^N$ if and only there exist $\Ki$-functions $\hat \alpha_i$, $i \in \{1, \ldots, N\}$ satisfying
\begin{equation}\label{eq:alpha-rem}
\tilde \sigma_i^{-1} \circ \hat d_{i2}^{-1} \circ \tilde \sigma_i = \id - \hat \alpha_i.
\end{equation}
Hence, to find a ``good'' decomposition $\hat D_1, \hat D_2$ of $D$, i.e., a decomposition for which~\eqref{eq:alphahat2} holds, we can try to find $\Ki$-functions $\hat d_{i2}^{-1} = (\id + \hat \delta_{i2})^{-1}$, $i \in \{1, \ldots, N\}$ that satisfy
\begin{enumerate}
\item equation~\eqref{eq:alpha-rem} for suitable $\hat\alpha_i \in \Ki$;
\item $\hat d_{i2} \circ \hat d_{i1} = d_i = \id + \delta_i$ with suitable $\hat d_{i1} = \id + \hat \delta_{i1}$.
\end{enumerate}
It is an open question, how to characterize the cases in which
there exists a decomposition satisfying these two conditions.
\end{remark}

In Theorem~\ref{theo:SGT-Dissipative_v1} we introduced the diagonal operator $D$ and assumed~\eqref{eq:alphahat}.
In the following corollary we impose further assumptions such that we do not need the diagonal operator $D$. 
Under these stronger assumptions, system~\eqref{eq:oas} is shown to be expISS.

\begin{corollary}\label{cor:SGT-Dissipative_v1}
Let~\eqref{eq:oas} be 
given by the interconnection of the subsystems in~\eqref{eq:subsystem}.
Assume there exist an $M\in \N$, $M\geq 1$,  linear functions $\gamma_{ij}\in \Ki$, and functions $\mathcal V_i:\R^{n_i}\rightarrow \R_+$ satisfying condition~(i) of Theorem~\ref{theo:SGT-Dissipative_v1} with linear functions $\alpha_{1i}, \alpha_{2i}$. 
Let condition~(ii) of Theorem~\ref{theo:SGT-Dissipative_v1} hold, and instead of condition~(iii) of Theorem~\ref{theo:SGT-Dissipative_v1} let the small-gain condition~\eqref{eq:sgc} hold.
Furthermore, assume that the $\K$-function $\omega_1$ in Assumption~\ref{ass:omega} is linear. Then system~\eqref{eq:oas} is expISS.
\end{corollary}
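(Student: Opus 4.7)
My plan is to exploit linearity to produce, via Theorem~\ref{theo:SGT-Dissipative_v1}, a dissipative finite-step ISS Lyapunov function $V$ with linear upper and lower bounds and a strictly contractive linear decay, and then invoke Theorem~\ref{theo:ftLF-expISS} to conclude expISS.

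First, I would upgrade the small-gain condition~\eqref{eq:sgc} to the strong version needed by Theorem~\ref{theo:SGT-Dissipative_v1}. Since each $\gamma_{ij}$ is linear, Proposition~\ref{prop:cycle} says that the product of the slopes around every simple cycle of the graph of $\Gamma_\oplus$ is strictly less than $1$. A Perron-type argument on the induced nonnegative slope matrix then yields constants $s_1,\ldots,s_N>0$ and $c_0\in(0,1)$ with $\max_j \gamma_{ij}(s_j r)\leq c_0\, s_i r$ for every $r\geq 0$ and every $i$. Choosing $\delta>0$ so small that $(1+\delta)c_0<1$ and setting $\delta_i(s):=\delta s$, $d_i:=\id+\delta_i$, $D:=\diag(d_i)$, a direct check gives $\Gamma_\oplus\circ D\not\geq\id$ and shows that $\tilde\sigma_i(r):=s_i r/(1+\delta)$ is a linear $\Omega$-path for $\Gamma_\oplus\circ D$. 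Because $\tilde\sigma_i$ and $d_i$ are linear, condition~\eqref{eq:alphahat} is automatic with $\hat\alpha_i(r)=\delta r/(1+\delta)\in\Ki$. Theorem~\ref{theo:SGT-Dissipative_v1} therefore produces $V(\xi)=\max_i \mathcal V_i(\xi_i)/s_i$, which by linearity of $\alpha_{1i},\alpha_{2i}$ and the norm equivalence~\eqref{eq:norm} satisfies linear bounds $a\dabs{\xi}\leq V(\xi)\leq b\dabs{\xi}$, together with the decay $V(x(M,\xi,u))\leq \tfrac{1}{1+\delta}V(\xi)+\sigma(\dabsu{u})$ for $\sigma=\max_i\tilde\sigma_i^{-1}\circ d_i^{-1}\circ\gamma_{iu}$.

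Finally, using linearity of $\omega_1$, I would apply Theorem~\ref{theo:ftLF-expISS} with $\lambda=1$ and $c=1/(1+\delta)<1$ to obtain the expISS estimate for system~\eqref{eq:oas}. The delicate point I foresee as the main obstacle is the input term: Theorem~\ref{theo:ftLF-expISS} is stated with a linear input gain $d\dabsu{u}$, whereas $\gamma_{iu}$ is only assumed to be of class $\K$. If the $\gamma_{iu}$ are linear as well, $\sigma$ is linear and the theorem applies verbatim; otherwise I would repeat the iteration driving the proof of Theorem~\ref{theo:ftLF-expISS} on every $M$-th time step, using $c^k$ as the contraction factor, the at-most-linear inter-step growth provided by $\omega_1$ being linear, and the fact that Definition~\ref{def:ISS} of expISS admits a general $\gamma\in\K$ on the input side. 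Either route yields the required expISS bound $\dabs{x(k)}\leq C\kappa^k\dabs{\xi}+\gamma(\dabsu{u})$ with $C\geq 1$, $\kappa\in[0,1)$, and $\gamma\in\K$.
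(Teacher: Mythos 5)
Your proof is correct, but it takes a genuinely different route from the paper's. The paper does \emph{not} upgrade to the strong small-gain condition or invoke Theorem~\ref{theo:SGT-Dissipative_v1} as a black box. Instead, it uses the plain small-gain condition~\eqref{eq:sgc} directly to obtain an $\Omega$-path $\tilde\sigma$ for $\Gamma_\oplus$ alone, exploits linearity of the $\gamma_{ij}$ to argue (via the construction in~\cite{GW12-MCSS}) that $\tilde\sigma$ may be chosen linear, and then defines $V(\xi):=\max_i\tilde\sigma_i^{-1}(\mathcal V_i(\xi_i))$ \emph{without} any $d_i^{-1}$ factor. The decay $\rho:=\max_{i,j}\tilde\sigma_i^{-1}\circ\gamma_{ij}\circ\tilde\sigma_j$ is then automatically a linear contraction because the $\Omega$-path inequality $\Gamma_\oplus(\tilde\sigma(r))<\tilde\sigma(r)$ and linearity force $\rho<\id$ and $(\id-\rho)\in\Ki$, with no condition of the form~\eqref{eq:alphahat} to verify. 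Your route---constructing $D=\diag((1+\delta)\id)$ from the cycle condition via a Perron-type scaling of the slope matrix, checking~\eqref{eq:alphahat} by hand, and feeding everything into Theorem~\ref{theo:SGT-Dissipative_v1}---is longer but has the virtue of verifying that the hypotheses of the general theorem really can be met in the linear case, and of showing explicitly what the diagonal relaxation buys. A pleasant check: with your scaling $\tilde\sigma_i(r)=s_ir/(1+\delta)$ and $d_i=(1+\delta)\id$, the factor $d_i^{-1}$ cancels and your $V(\xi)=\max_i\mathcal V_i(\xi_i)/s_i$ coincides with the paper's $V$. You also correctly flag a real subtlety that the paper's proof glosses over: Theorem~\ref{theo:ftLF-expISS} as stated requires a \emph{linear} input term $d\dabsu{u}$, whereas $\gamma_{iu}$ is only assumed to be of class $\K$, so $\sigma$ need not be linear. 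Your proposed remedy---observing that the exponential contraction of the state part in the proof of Theorem~\ref{theo:ftLF-expISS} is driven entirely by the linear $\rho$, while the input gain in Definition~\ref{def:ISS} is allowed to be a general $\K$-function---is exactly the right fix and makes the argument airtight.
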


\begin{proof}
We follow the proof of Theorem~\ref{theo:SGT-Dissipative_v1}. 
By the small-gain condition~\eqref{eq:sgc} there exists an $\Omega$-path $\tilde \sigma \in \Ki^N$ satisfying 
 $\Gamma_\oplus(\tilde\sigma)(r)<\tilde \sigma(r)$ for all $r >0$, see~\cite{DRW09}.
Moreover, as the functions $\gamma_{ij} \in \Ki$ are linear for all $i,j \in \{1, \ldots, N\}$, we can also assume the $\Omega$-path functions $\tilde \sigma_i \in \Ki$ to be linear, see~\cite{GW12-MCSS}.
Thus, the  function 
\begin{equation} \label{eq:Vexp}
V(\xi):= \max_i \tilde \sigma^{-1}_i(\mathcal V_i(\xi_i))
\end{equation} 
has linear bounds $\alpha_1$ and $\alpha_2$. Furthermore, since $\tilde \sigma_i$ and $\gamma_{ij}$ are linear functions, 
we obtain~\eqref{eq:decayV}
with the linear function $\rho:= \max_{i,j} \tilde \sigma_i^{-1} \circ \gamma_{ij} \circ \tilde \sigma_j<\id$, and $\sigma:= \max_i \tilde \sigma^{-1}_i \circ \gamma_{iu}$.
Clearly, $(\id-\rho)\in \Ki$ by linearity of $\rho$. Thus, $V$ is a dissipative finite-step ISS Lyapunov function for system~\eqref{eq:oas}.
Since $\omega_1$ in Assumption~\ref{ass:omega} is linear, we can apply Theorem~\ref{theo:ftLF-expISS} to show that system~\eqref{eq:oas} is expISS.
\hfill~\end{proof}

By Remark ~\ref{rem:omega}, the requirement that $\omega_1$
  is linear is necessary for the system to be expISS.

A similar reasoning as in Corollary~\ref{cor:SGT-Dissipative_v1} applies in the case, where the external input enters additively.

\begin{corollary}\label{cor:SGT-Dissipative_v2}
Let~\eqref{eq:oas} be 
given by the interconnection of the subsystems in~\eqref{eq:subsystem}.
Assume there exist an $M\in \N$, $M\geq 1$,  linear functions $\gamma_{ij}\in \Ki$, and functions $\mathcal V_i:\R^{n_i}\rightarrow \R_+$ satisfying condition~(i) of Theorem~\ref{theo:SGT-Dissipative_v2} with linear functions $\alpha_{1i}, \alpha_{2i}$. 
Let condition~(ii) of Theorem~\ref{theo:SGT-Dissipative_v2} and the small-gain condition~\eqref{eq:sgc} hold.
Furthermore, assume that $\omega_1$ in Assumption~\ref{ass:omega} is linear. 
Then system~\eqref{eq:oas} is expISS.
\end{corollary}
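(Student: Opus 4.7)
My plan is to mirror the proof of Corollary~\ref{cor:SGT-Dissipative_v1}, adapting the computation to the summation form of condition~(ii) in Theorem~\ref{theo:SGT-Dissipative_v2}. The key point is that in the linear setting the small-gain condition~\eqref{eq:sgc} is enough on its own; there is no need to pass through the strong small-gain condition or to produce a factorization of a diagonal operator $D$, because the strict inequality $\Gamma_\oplus(\tilde\sigma)<\tilde\sigma$ is automatically uniform.

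First, from the small-gain condition~\eqref{eq:sgc} together with \cite[Theorem~5.2]{DRW09}, one obtains an $\Omega$-path $\tilde\sigma \in \Ki^N$ with $\Gamma_\oplus(\tilde\sigma(r)) < \tilde\sigma(r)$ for all $r>0$. Because the $\gamma_{ij}$ are linear, the construction in~\cite{GW12-MCSS} can be carried out so that each $\tilde\sigma_i$ is linear, say $\tilde\sigma_i(r)=s_i r$ with $s_i>0$. Define
\begin{equation*}
V(\xi) := \max_i \tilde\sigma_i^{-1}(\mathcal V_i(\xi_i)),
\end{equation*}
which, by the linear bounds on $\mathcal V_i$ and linearity of $\tilde\sigma_i^{-1}$, admits linear upper and lower bounds $a\dabs{\xi} \leq V(\xi) \leq b\dabs{\xi}$ for some $0<a\leq b$.

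Next, since $\tilde\sigma_i^{-1}$ is linear and therefore additive, I would estimate using condition~(ii) of Theorem~\ref{theo:SGT-Dissipative_v2}
\begin{align*}
V(x(M,\xi,u(\cdot))) &\leq \max_i \tilde\sigma_i^{-1}\!\Bigl(\max_j \gamma_{ij}(\mathcal V_j(\xi_j)) + \gamma_{iu}(\dabsu{u})\Bigr)\\
&= \max_i\Bigl[\max_j (\tilde\sigma_i^{-1}\!\circ\!\gamma_{ij}\!\circ\!\tilde\sigma_j)\bigl(\tilde\sigma_j^{-1}(\mathcal V_j(\xi_j))\bigr) + \tilde\sigma_i^{-1}\!\circ\!\gamma_{iu}(\dabsu{u})\Bigr]\\
&\leq \rho\bigl(V(\xi)\bigr) + \sigma(\dabsu{u}),
\end{align*}
where $\rho := \max_{i,j}\tilde\sigma_i^{-1}\!\circ\!\gamma_{ij}\!\circ\!\tilde\sigma_j$ and $\sigma := \max_i \tilde\sigma_i^{-1}\!\circ\!\gamma_{iu}$. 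Each composition $\tilde\sigma_i^{-1}\!\circ\!\gamma_{ij}\!\circ\!\tilde\sigma_j$ is a linear map $c_{ij}\id$ with $c_{ij}<1$ whenever $\gamma_{ij}\neq 0$ (and $c_{ij}=0$ otherwise), thanks to the strict $\Omega$-path inequality. Hence $\rho = c\,\id$ is a linear strict contraction with $c := \max_{i,j} c_{ij} \in [0,1)$, and in particular $(\id-\rho)\in\Ki$. Thus $V$ is a dissipative finite-step ISS Lyapunov function for~\eqref{eq:oas}, and since $\omega_1$ in Assumption~\ref{ass:omega} is linear, Theorem~\ref{theo:ftLF-expISS} applies and yields expISS of~\eqref{eq:oas}.

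The main obstacle I foresee is the same as in Corollary~\ref{cor:SGT-Dissipative_v1}: justifying that the $\Omega$-path can be chosen \emph{linear} in the linear setting (for which I appeal to~\cite{GW12-MCSS}) and reconciling the possibly nonlinear external-input term $\sigma$ obtained above with the strictly linear input form required by Theorem~\ref{theo:ftLF-expISS}. Since Corollary~\ref{cor:SGT-Dissipative_v1} proceeds identically at this point, I would resolve the issue in the same way.
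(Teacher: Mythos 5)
Your proposal is correct and takes essentially the same route as the paper's own proof: construct a linear $\Omega$-path $\tilde\sigma$ from the small-gain condition~\eqref{eq:sgc} (invoking~\cite{GW12-MCSS}), form $V(\xi)=\max_i\tilde\sigma_i^{-1}(\mathcal V_i(\xi_i))$ with linear bounds, use additivity of the linear $\tilde\sigma_i^{-1}$ to obtain the decay estimate with $\rho=\max_{i,j}\tilde\sigma_i^{-1}\circ\gamma_{ij}\circ\tilde\sigma_j<\id$ and $\sigma=\max_i\tilde\sigma_i^{-1}\circ\gamma_{iu}$, observe that linearity of $\rho$ gives $(\id-\rho)\in\Ki$ for free, and conclude via Theorem~\ref{theo:ftLF-expISS}. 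Your remark that the strong small-gain condition and the factorization of $D$ are dispensable here precisely because linear $\rho<\id$ automatically satisfies $(\id-\rho)\in\Ki$ is exactly the reason the paper does not invoke them either.
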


\begin{proof} We omit the details as the proof follows the lines of the proof of Theorem~\ref{theo:SGT-Dissipative_v2} combined with the argumentation of the proof of Corollary~\ref{cor:SGT-Dissipative_v1}.

First, the small-gain condition implies the existence of an $\Omega$-path $\tilde \sigma \in \Ki^N$, which is linear as the functions $\gamma_{ij}$ are linear. In particular, $\Gamma_\oplus(\tilde \sigma(r))< \sigma(r)$ for all $ r>0$.
Next, note that the function $V$ defined in~\eqref{eq:Vexp} has linear bounds as shown in the proof of Corollary~\ref{cor:SGT-Dissipative_v1}.
It satisfies 
\begin{equation*}
V(x(M,\xi,u(\cdot))) \leq \rho(V(\xi)) + \sigma(\dabsu{u})
\end{equation*}
with $\rho:= \max_{i,j} \tilde \sigma_i^{-1} \circ \gamma_{ij} \circ \tilde \sigma_j$ and $\sigma:= \max_i \tilde \sigma_i^{-1} \circ \gamma_{iu}$, which can be seen by a straightforward calculation, invoking condition~(ii) of Theorem~\ref{theo:SGT-Dissipative_v2} and the linearity of the $\Omega$-path~$\tilde \sigma$. 
Again as in the proof of Corollary~\ref{cor:SGT-Dissipative_v1}, $(\id-\rho)\in \Ki$ by linearity of $\rho$. Thus, $V$ defined in~\eqref{eq:Vexp} is a dissipative finite-step ISS Lyapunov function for system~\eqref{eq:oas}.
Since $\omega_1$ in Assumption~\ref{ass:omega} is linear, we can apply Theorem~\ref{theo:ftLF-expISS}, and the result follows.
\hfill~\end{proof}

In this section we have presented sufficient criteria to conclude ISS, whereas in the next section we will study the necessity of these relaxed small-gain results.

\subsection{Non-conservative expISS Small-Gain Theorems}\label{subsec:converseSGT}

In the remainder of this section we show that the relaxation of classical small-gain theorems given in Theorems~\ref{theo:SGT-Dissipative_v1} and \ref{theo:SGT-Dissipative_v2} is non-conservative
at least for expISS systems.

\begin{theorem} \label{theo:SGTdissipativeConv} 
Let system~\eqref{eq:oas} be 
given by the interconnection of the subsystems in~\eqref{eq:subsystem}.
Then system~\eqref{eq:oas}
 is expISS if and only if 
\begin{enumerate}
\item
Assumption~\ref{ass:omega} holds with linear $\omega_1$, and
\item
there exist an $M\in \N$, $M\geq 1$,  
linear functions $\gamma_{ij}\in \Ki$, 
proper and positive definite functions $\mathcal V_i:\R^{n_i}\rightarrow \R_+$ with linear bounds $\alpha_{1i}, \alpha_{2i}\in\Ki$  such that the following holds:
\begin{enumerate}
\item condition (ii) of Theorem~\ref{theo:SGT-Dissipative_v1} (and thus also condition~(ii) of Theorem~\ref{theo:SGT-Dissipative_v2});
\item the small-gain condition \eqref{eq:sgc}.
\end{enumerate}
\end{enumerate}
\end{theorem}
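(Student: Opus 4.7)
The plan is to treat the two implications of the equivalence separately; they are of very different character.

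For sufficiency $(\Leftarrow)$, the argument will be a single invocation. The hypotheses (i) and (ii) are tailored to Corollary~\ref{cor:SGT-Dissipative_v1}: item~(i) supplies the linear $\omega_1$, item~(ii)(a) provides condition~(ii) of Theorem~\ref{theo:SGT-Dissipative_v1} together with the linear bounds on the $\mathcal V_i$ and linear $\gamma_{ij}$ required by the corollary, and item~(ii)(b) is the small-gain condition~\eqref{eq:sgc}. Applying the corollary immediately yields that system~\eqref{eq:oas} is expISS.

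For necessity $(\Rightarrow)$, I would assume~\eqref{eq:ISSestimate} with $\beta(r,t)=C\kappa^t r$ for some $C\ge 1$, $\kappa\in[0,1)$, and some $\gamma\in\K$. Item~(i) comes for free from Remark~\ref{rem:omega}, which already records that any expISS system is globally $\K$-bounded with a linear $\omega_1$. The substance lies in constructing the ingredients of item~(ii), and here I would be guided by Theorem~\ref{theo:converseexpISS}: simply take $\mathcal V_i(\xi_i):=\dabs{\xi_i}$, for which the bounds $\alpha_{1i}=\alpha_{2i}=\id$ are linear. The monotonic-norm assumption forces $\dabs{x_i}\le\dabs{x}$ for the block extractions, and~\eqref{eq:norm} gives $\dabs{\xi}\le\kappa_0\max_j\dabs{\xi_j}$ for some $\kappa_0\ge 1$. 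Chaining with the expISS estimate, I get
\[
\dabs{x_i(M,\xi,u(\cdot))}\;\le\;C\kappa^M\kappa_0\max_j\dabs{\xi_j}+\gamma(\dabsu u)\;\le\;\max\!\Bigl\{\lambda\max_j\dabs{\xi_j},\;2\gamma(\dabsu u)\Bigr\},
\]
where $\lambda:=2C\kappa_0\kappa^M$, using $a+b\le 2\max\{a,b\}$. Picking $M$ so large that $\lambda<1$, condition~(ii) of Theorem~\ref{theo:SGT-Dissipative_v1} holds with the common linear gains $\gamma_{ij}(s):=\lambda s$ and $\gamma_{iu}(s):=2\gamma(s)$.

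The small-gain condition~\eqref{eq:sgc} then costs nothing: for any $s\in\R^n_+\setminus\{0\}$, choose $i^*$ with $[s]_{i^*}=\max_j[s]_j>0$; then $[\Gamma_\oplus(s)]_{i^*}=\lambda[s]_{i^*}<[s]_{i^*}$, so $\Gamma_\oplus(s)\not\ge s$. I do not expect a genuine obstacle: the only real ``choice'' is taking $M$ large enough that $\lambda<1$, and the only subtlety is the block-extraction inequality $\dabs{x_i}\le\dabs{x}$, which is precisely what the monotonicity of the norm buys. The theorem should be read as saying that in the exponential setting the data $(\mathcal V_i,\gamma_{ij},\gamma_{iu},M)$ can be read off the expISS bound itself, so the relaxed small-gain hypothesis of Corollary~\ref{cor:SGT-Dissipative_v1} is not merely sufficient but also necessary.
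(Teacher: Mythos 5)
Your proof is correct and follows essentially the same strategy as the paper's: take $\mathcal V_i(\xi_i)=\dabs{\xi_i}$, use monotonicity of the norm together with the block estimate~\eqref{eq:norm}, pick the number of steps large enough that the resulting linear gains $\gamma_{ij}$ are strict contractions, and conclude the small-gain condition from $\gamma_{ij}<\id$. The one difference is purely organizational: the paper first invokes Theorem~\ref{theo:converseexpISS} to obtain a finite-step decay $\dabs{x(\tilde M,\xi,u(\cdot))}\le c\dabs{\xi}+\sigma(\dabsu u)$ with $c<1$, and then iterates it $l$ times so that $c^l\kappa<\tfrac12$, whereas you apply the expISS estimate directly and choose a single $M$ with $2C\kappa_0\kappa^M<1$; both are valid, and your shortcut avoids the detour through Theorem~\ref{theo:converseexpISS}. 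You also verify $\Gamma_\oplus\not\geq\id$ by hand rather than via Proposition~\ref{prop:cycle}, which is equally fine since all gains equal $\lambda\id$.
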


\begin{proof}
Sufficiency is shown in Corollary~\ref{cor:SGT-Dissipative_v1} and Corollary~\ref{cor:SGT-Dissipative_v2}, so we only have to prove necessity.
Since system~\eqref{eq:oas} is expISS, global $\K$-boundedness holds with linear $\omega_1$, see Remark~\ref{rem:omega}.
Furthermore, the function $V(\xi):= \dabs{\xi}$, $\xi \in \R^n$ is a dissipative finite-step ISS Lyapunov function for system~\eqref{eq:oas} by Theorem~\ref{theo:converseexpISS}. 
Hence, there exist $\tilde M \in \N$, $\sigma \in \K$ 
and $c<1$ 
such that for all $\xi \in \R^n$ and all $ u(\cdot) \in l^\infty (\R^m)$ we have
\begin{equation}\label{eq:xleqdrho+sigma}
\dabs{x(\tilde M,\xi,u(\cdot))} \leq c\dabs{\xi} + \sigma(\dabsu{u}).
\end{equation}
Define 
$ \mathcal V_i(\xi_i) := \dabs{\xi_i}$
 for $i \in \{1, \ldots, N\}$, where the norm for $\xi_i \in \R^{n_i}$ is defined in the preliminaries. Then $\mathcal V_i$ is proper and positive definite with $\alpha_{1i} = \alpha_{2i} =\id$ for all $i\in \{1, \ldots, N\}$.
Take $\kappa\geq 1$ from~\eqref{eq:norm}, and define
\begin{equation*}
l:=\min\{ \ell \in \N : c^{\ell}\kappa < \tfrac{1}{2} \},
\end{equation*}
which exists as $c \in [0,1)$.
Then we have
\begin{align*}
\mathcal V_i(x_i(l \tilde M,\xi,u(\cdot))) &=  \dabs{x_i(l \tilde M,\xi,u(\cdot))} 
\leq  \dabs{x(l \tilde M,\xi,u(\cdot))} \\
& \stackrel{\eqref{eq:xleqdrho+sigma}}{\leq}   c\dabs{x((l-1) \tilde M,\xi,u(\cdot))} + \sigma(\dabsu{u}) \\
& \stackrel{\eqref{eq:xleqdrho+sigma}}{\leq} c^l \dabs{\xi} + \sum_{j'=1}^l c^{j'-1} \sigma(\dabsu{u}) \\
& \stackrel{\eqref{eq:norm}}{\leq} \max_j  c^l\kappa\dabs{\xi_j} +\sum_{j'=1}^l c^{j'-1} \sigma(\dabsu{u}) \\
& \leq \max \left\{ \max_j \gamma_{ij}(\xi_j),  \gamma_{iu}(\dabsu{u}) \right\} 
\\
& \leq  \max_j \gamma_{ij}(\xi_j) + \gamma_{iu}(\dabsu{u}) 
\end{align*}
with $ \gamma_{iu}(\cdot):= 2 \sum_{j=1}^l c^{j-1} \sigma(\cdot)$, and $\gamma_{ij} := 2c^l\kappa \id$. 
The last inequality shows condition~(ii) of Theorem~\ref{theo:SGT-Dissipative_v2}, while the second last inequality shows condition~(ii) of Theorem~\ref{theo:SGT-Dissipative_v1} for $M= l \tilde M$. 
Finally, by definition of $l\in \N$, we have  $\gamma_{ij}< \id$ for all $i,j \in \{1, \ldots, N\}$. 
Hence, Proposition~\ref{prop:cycle} implies the small-gain condition~\eqref{eq:sgc}. This proves the theorem.
\hfill~\end{proof}

Theorem~\ref{theo:SGTdissipativeConv} is proved in a constructive way, i.e., it is shown that under the assumption that system~\eqref{eq:oas} is expISS we can choose the Lyapunov-type functions $\mathcal V_i: \R^{n_i} \rightarrow \R_+$ as norms, i.e., $\mathcal V_i(\cdot) = \dabs{\cdot}$. 
Then there exist an $M \in \N$ and linear gains $\gamma_{ij} \in \Ki$ satisfying condition~(ii) of Theorem~\ref{theo:SGT-Dissipative_v1}, and thus also condition~(ii) of Theorem~\ref{theo:SGT-Dissipative_v2}, as well as the small-gain condition~\eqref{eq:sgc}. This suggests the following procedure.

\begin{proc}\label{proc:expISS}
Consider~\eqref{eq:oas} as the overall system of the subsystems~\eqref{eq:subsystem}.
Check that Assumption~\ref{ass:omega} is satisfied with a linear $\omega_1$ (else the origin of system~\eqref{eq:oas} cannot be expISS, see Remark~\ref{rem:omega}).
Define $\mathcal V_i(\xi_i) := \dabs{\xi_i}$ for $\xi_i \in \R^{n_i}$, and set $k=1$.
\begin{enumerate} 
\item[{[1]}] Compute $\gamma_{iu} \in \Ki$ and linear functions $\gamma_{ij}\in \Ki \cup \{0\}$ satisfying
\begin{equation*}
\mathcal V_i(x_i(k,\xi,u(\cdot))) = \dabs{x_i(k,\xi,u(\cdot))} \leq \max_{j \in \{1, \ldots, N\}} \gamma_{ij} \dabs{\xi_j} + \gamma_{iu}(\dabsu{u}).
\end{equation*}
\item[{[2]}] Check the small-gain condition~\eqref{eq:sgc} with $\Gamma_{\oplus}$ defined in~\eqref{eq:gainmap}. If~\eqref{eq:sgc} is violated set $k=k+1$ and repeat with~[1].
\end{enumerate}
If this procedure is successful, then expISS of the overall system~\eqref{eq:oas} is shown by 
Theorem~\ref{theo:SGTdissipativeConv}.
Moreover, a dissipative finite-step ISS Lyapunov function can be constructed via~\eqref{eq:Vexp}.
\end{proc}

\begin{remark}
Although Procedure~\ref{proc:expISS} is straightforward, even for simple classes of systems, finding a suitable $M \in \N$  may be computationally intractable, as it was shown in \cite{Blondel99}. 
A systematic way to find a suitable number $M\in \N$ for certain classes of systems is discussed in \cite{GGLW13-SCL}.
\end{remark}

In the next section we consider a nonlinear system and show how Procedure~\ref{proc:expISS} can be applied.

\section{Example} \label{sec:example}

In Section~\ref{sec:sgt} the conservatism of classical small-gain theorems was illustrated by a linear example without external inputs, where the origin is GAS, but 
where the decoupled subsystems' origin are not $0$-GAS. 
In this section we consider a nonlinear example with external inputs and show how the relaxed small-gain theorem from Section~\ref{sec:sgt} can be applied.

Consider the nonlinear system
\begin{equation} \label{eq:ex}
\begin{aligned}
x_1(k+1) &= x_1(k)- 0.3 x_2(k) + u(k)\\
x_2(k+1) &= x_1(k) + 0.3 \frac{x_2^2(k)}{1+x_2^2(k)}
\end{aligned}
\end{equation}
with $ x_1(\cdot), x_2(\cdot), u(\cdot) \in l^\infty (\R)$.
We will show that this system is ISS by constructing a suitable dissipative finite-step ISS Lyapunov function following Procedure~\ref{proc:expISS}. 
Note that the origin of the first subsystem decoupled from the second subsystem with zero input is not 0-GAS,\footnote{We could also make the first system $0$-input unstable by letting $x_1(k+1)=(1+\epsilon)x_1(k)-0.3x_2(k)+u(k)$ and $\epsilon>0$ small enough, and obtain the same conclusion,
 see also \cite{GLW13-SG}. But here we let $\epsilon=0$ to simplify computations.}  hence not ISS. 
So we cannot find an ISS Lyapunov function for this subsystem. 

The converse small-gain results in Section~\ref{subsec:converseSGT} suggest to prove ISS  by a search for suitable functions $\mathcal V_i$ and $\gamma_{ij} \in \Ki$
that satisfy the conditions of one of the small-gains theorems of this section (eg. Theorem~\ref{theo:SGT-Dissipative_v1} or Corollary~\ref{cor:SGT-Dissipative_v1}).
Here we follow Procedure~\ref{proc:expISS}.

First, the right-hand side $G$ of \eqref{eq:ex} is globally $\K$-bounded, since
\begin{equation*}
\dabsinf{G(\xi,\mu)}\leq \max\{\dabs{\xi_1}+0.3 \dabs{\xi_2}+ \dabs{\mu}, \dabs{\xi_1}+0.3 \tfrac{\xi_2^2}{1+\xi_2^2} \} \leq 1.3 \dabsinf{\xi}+\dabs{\mu},
\end{equation*}
where we used
that for all $x \in \R$ we have
\begin{equation}\label{eq:x^2/1+x^2} 
\tfrac{x^2}{1+x^2} \leq \tfrac{\abs{x}}{2}. 
\end{equation}
Let  $\mathcal V_i(\xi_i):= \abs{\xi_i}$, $i\in \{1,2\}$.
Then we compute for all $\xi \in \R^2$,
\begin{align*}
\mathcal V_1(x_1(1, \xi, u(\cdot))) &=  \abs{\xi_1 - 0.3 \xi_2 + u(0)} \leq \max \left\{ 2\mathcal V_1(\xi_1), 0.6 \mathcal V_2(\xi_2) \right\}+ \dabsu{u} , \\
\mathcal V_2(x_2(1, \xi, u(\cdot)))  &= \abs{ \xi_1  +  0.3\tfrac{\xi_2^2}{1+\xi_2^2} } \leq \max\left\{ 2\mathcal V_1(\xi_1), 0.6\tfrac{\mathcal V_2^2(\xi_2)}{1+\mathcal V_2^{2}(\xi_2)} \right\} .
\end{align*}
Since $\gamma_{11}(s) = 2s$, the small-gain condition is violated and we cannot conclude stability. 
Intuitively, this was expected from the above observation that the origin of the first subsystem is not ISS.

Computing solutions $x(k,\xi, u(\cdot))$ with initial condition $\xi \in \R^2$ and input $ u(\cdot) \in l^\infty (\R)$ we see that for $k=3$ we have
\begin{equation*}
\displaystyle
x(3,\xi,u(\cdot))=
\left( \!\!\!  \begin{array}{c} 
\scriptstyle
0.4 \xi_1- 0.21 \xi_2 -0.09 \tfrac{\xi_2^2}{1+ \xi_2^2} -0.09 \tfrac{(\xi_1 +0.3 \tfrac{\xi_2^2}{1+\xi_2^2})^2}{1+(\xi_1 +0.3 \tfrac{\xi_2^2}{1+\xi_2^2})^2} + 0.7 u(0) + u(1)+ u(2)
 \\
\scriptstyle
0.7 \xi_1- 0.3 \xi_2 -0.09 \tfrac{\xi_2^2}{1+ \xi_2^2}\!+\!0.3 \tfrac{\left(\xi_1 - 0.3 \xi_2 +0.3 \tfrac{(\xi_1 +0.3 \tfrac{\xi_2^2}{1+\xi_2^2})^2}{1+(\xi_1 +0.3 \tfrac{\xi_2^2}{1+\xi_2^2})^2}\right)^2}{1+\left(\xi_1 - 0.3 \xi_2 +0.3 \tfrac{(\xi_1 +0.3 \tfrac{\xi_2^2}{1+\xi_2^2})^2}{1+(\xi_1 +0.3 \tfrac{\xi_2^2}{1+\xi_2^2})^2}\right)^2} + u(0) + u(1)
 \end{array} \!\!\!\right).\end{equation*}
Using~\eqref{eq:x^2/1+x^2} we compute
\begin{align*}
\mathcal V_1(x_1(3, \xi, u(\cdot))) &\leq  
0.4\abs{\xi_1} + 0.21 \abs{\xi_2} + \tfrac{0.09}{2} \abs{\xi_2} +  \tfrac{0.09}{2}  \left( \abs{\xi_1} + \tfrac{0.3}{2}  \abs{ \xi_2}\right) \\
& \qquad + 0.7 \abs{u(0)} + \abs{u(1)} + \abs{u(2)}
\\
&= \max\{0.89 \mathcal V_1(\xi_1), 0.5235 \mathcal V_2(\xi_2) \} + 2.7 \dabsu{u},
\\
\mathcal V_2(x_2(3, \xi,u(\cdot))) &\leq 
 0.7 \abs{\xi_1} + 0.3 \abs{\xi_2}+\tfrac{0.09}{2} \abs{\xi_2}+  \tfrac{0.3}{2} \left(\abs{\xi_1} + 0.3 \abs{\xi_2} +\tfrac{0.3}{2} (\abs{\xi_1}\!+\!\tfrac{0.3}{2} \abs{\xi_2})  \right) \\
&\qquad + \abs{u(0)} + \abs{u(1)}
\\
&= \max\{1.745 \mathcal V_1(\xi_1), 0.78675\mathcal V_2(\xi_2) \} +2\dabsu{u}.
\end{align*}
From this we derive the linear functions
\begin{align*} 
\gamma_{11} (s) &= 0.89s, & \quad  \gamma_{12} (s) &= 0.5235s , & \qquad & \gamma_{1u} (s) = 2.7s,  \\
\gamma_{21} (s) &= 1.745s,  &\quad  \gamma_{22} (s) &= 0.78675s, & \qquad & \gamma_{2u} (s) = 2s .
\end{align*}
This yields
the map $\Gamma_\oplus:\R^2_+ \rightarrow \R^2_+$ from \eqref{eq:gainmap} as
\begin{equation}\label{eq:Gammaex}
\Gamma_{\oplus}((s_1,s_2)) = \left( \begin{array}{c} \max\{0.89s_1,0.5235s_2\}  \\ \max\{1.745s_1,0.78675s_2\} \end{array} \right).
\end{equation}
Since $\gamma_{11}< \id, \gamma_{22}<\id$ and $\gamma_{12} \circ \gamma_{21} <\id$, we conclude from the cycle condition, Proposition~\ref{prop:cycle}, that the small-gain condition \eqref{eq:sgc} is satisfied. Hence, from Corollary~\ref{cor:SGT-Dissipative_v2} we can now conclude that the origin of system~\eqref{eq:ex} is expISS.

\begin{remark}\label{rem:ex}
The small-gain results in Section~\ref{sec:sgt}, and in particular Corollary~\ref{cor:SGT-Dissipative_v2}, prove the ISS property of the interconnected system~\eqref{eq:oas} by constructing a dissipative finite-step ISS Lyapunov function. 
The following shows that this construction is straightforward to implement. 

Consider system~\eqref{eq:ex} and the map $\Gamma_\oplus$ derived in~\eqref{eq:Gammaex}.
We use the method proposed in \cite{GW12-MCSS} to compute an $\Omega$-path  $ \tilde \sigma(r):=\left( \begin{smallmatrix}0.5 r \\ 0.9r
\end{smallmatrix} \right)$
that satisfies 
\begin{equation*} \Gamma_\oplus (\tilde \sigma(r)) = \left( \begin{array}{c} 0.47115r  \\ 0.8725r \end{array} \right) < \left( \begin{array}{c} 0.5 r \\ 0.9r \end{array} \right) = \tilde \sigma(r) \end{equation*}
for all $r>0$. 
From the proof of Corollary~\ref{cor:SGT-Dissipative_v2} we can now conclude that
 \begin{equation*}
 V(\xi):= \max_i \tilde \sigma_i^{-1}(\mathcal V_i(\xi_i))= \max \{2\abs{\xi_1}, \tfrac{10}{9} \abs{\xi_2} \} \end{equation*}
is a dissipative finite-step ISS Lyapunov function for the overall system \eqref{eq:ex}. 
In particular, following the proof of Corollary~\ref{cor:SGT-Dissipative_v2}, we compute
\begin{equation*}
\rho(s):= \max_{i,j \in \{1,2\}} \tilde \sigma_i^{-1}\circ \gamma_{ij}\circ \tilde \sigma_j(s) = 0.9695 s,
\end{equation*}
and,
\begin{equation*}
\sigma(s):=\max_{i\in\{1,2\}} \tilde \sigma_i^{-1} \circ \gamma_{iu} = 5.4 s
\end{equation*}
for which $V$ satisfies $V(x(3,\xi,u(\cdot)) \leq \rho(V(\xi))+\sigma(\dabsu{u})$ for all $\xi \in \R^2$.
\end{remark}

\begin{remark}
Although the construction of the dissipative finite-step ISS Lyapunov function via the small-gain approach (Procedure~\ref{proc:expISS}) 
requires the computation of an $\Omega$-path, we believe that for large-scale interconnections the small-gain approach is still more advisable than 
a direct search for a dissipative finite-step ISS Lyapunov function (e.g. following Procedure~\ref{proc:LFdirect}). 
The reason for this belief is that the choice of a suitable natural number $M$ in Procedure~\ref{proc:LFdirect}  might be, in general, much higher than the choice for a suitable natural number $M$ in Procedure~\ref{proc:expISS}. 

For instance, consider system~\eqref{eq:ex}. 
As shown above, Procedure~\ref{proc:expISS} can be applied for $M=3$.
On the other hand, to make computations of norm estimates simpler, consider the $1$-norm $\dabs{\cdot}_1$.
Then we obtain
\begin{equation*}
\dabs{x(3,\xi, u(\cdot))} \leq 1.2225 \abs{\xi_1} + 0.655125 \abs{\xi_2} + 4.7 \dabsu{u}
\end{equation*}
by following similar computations as above using~\eqref{eq:x^2/1+x^2}. 
Hence, $V(\cdot):= \dabs{\cdot}_1$ cannot be a dissipative finite-step ISS Lyapunov function with $M=3$. Similar estimates also show that $V$ is not a dissipative finite-step ISS Lyapunov function for $M<3$. 
Thus,  Procedure~\ref{proc:LFdirect} for $V(\cdot):= \dabs{\cdot}_1$ requires that $M>3$.
\end{remark}

\section{Conclusion}

In this work we introduced the notion of dissipative finite-step ISS Lyapunov functions as a relaxation of ISS Lyapunov functions. These finite-step ISS Lyapunov functions were shown to be necessary and sufficient to conclude ISS of the underlying discrete-time system. 
In particular, for expISS system, norms are always dissipative finite-step ISS Lyapunov functions.
Furthermore, we stated relaxed ISS small-gain theorems that drop the common assumption of small-gain theorems that the subsystems are ISS. ISS of the overall systems was then proven by constructing a dissipative finite-step ISS Lyapunov function. For the class of expISS systems, these small-gain theorems are shown to be non-conservative, i.e., necessary and sufficient to conclude ISS of the overall system. An example showed how the results can be applied.


\appendix
\section{\color{white}}

The proofs in Section~\ref{sec:ftISS} require the following lemmas.

\subsection{A comparison lemma}\label{app:comparison}

The following lemma is a particular comparison lemma for finite-step dynamics.

\begin{lemma}\label{lem:comparison}
Let $M\in \N\backslash\{0\}$, $L \in \N \cup \{\infty\}$, $k_0 \in \{0, \ldots, M-1\}$, and $y:\N \rightarrow \R_+$ be a function satisfying 
\begin{equation}\label{eq:lemcomparison} 
y\left( (l+1)M+k_0 \right) \leq \chi \left( y(lM+k_0) \right) , \qquad \forall \, l \in \{0, \ldots, L\},
\end{equation}
where $\chi \in \Ki$ satisfies $\chi < \id$. 
Then there exists a $\KL$-function $\beta_{k_0}$ such that the function $y$ also satisfies
\begin{equation*} 
y(lM+k_0) \leq \beta_{k_0}(y(k_0),lM+k_0), \qquad \forall \, l \in \{0, \ldots, L\}.
\end{equation*}

In addition, if~\eqref{eq:lemcomparison} is satisfied for all $k_0 \in \{0, \ldots, M-1\}$
then there exists a $\KL$-function $\beta$ such that with $y_M^{\max}:= \max \{ y(0), \ldots, y(M-1)\}$ we have
\begin{equation*} 
y(k) \leq \beta(y_M^{\max},k), \qquad \forall k \in \{0, \ldots, (L+1)M-1\}.
\end{equation*}
\end{lemma}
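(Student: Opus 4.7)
The plan is to reduce the statement to a standard $\KL$-majorization result for iterates of a contractive $\Ki$-function, and then reparametrize in terms of the true time index.

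First I would iterate the hypothesis directly. From $y((l+1)M+k_0)\leq \chi(y(lM+k_0))$ applied inductively on $l$, and using that $\chi\in\Ki$ is monotone, I obtain
\begin{equation*}
y(lM+k_0) \;\leq\; \chi^{l}(y(k_0)) \qquad \forall\, l\in\{0,\ldots,L\}.
\end{equation*}
Thus everything reduces to majorizing the discrete semi-orbit $l\mapsto \chi^l(s)$ by a $\KL$-function.

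Second, I would invoke the classical Sontag-type $\KL$-lemma for iterates: since $\chi\in\Ki$ with $\chi<\id$, there exists $\tilde\beta\in\KL$ such that $\chi^l(s)\leq \tilde\beta(s,l)$ for all $s\geq 0$ and $l\in\N$. This result is completely independent of $y$, $M$ and $k_0$, which is exactly the uniformity needed later when Theorem~\ref{theo:ftLF-ISS} combines the estimates for different $k_0$. Combining with the previous display gives
\begin{equation*}
y(lM+k_0) \;\leq\; \tilde\beta(y(k_0),l) \qquad \forall\, l\in\{0,\ldots,L\}.
\end{equation*}

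Third, I would reparametrize the time variable. Writing $t=lM+k_0$ and noting $l=(t-k_0)/M$, I set
\begin{equation*}
\beta_{k_0}(s,t) \;:=\; \tilde\beta\!\left(s,\,\max\{0,(t-k_0)/M\}\right) \;+\; \tilde\beta(s,0)\,e^{-t},
\end{equation*}
where the second summand is added only to guarantee strict monotonicity in $t$ in case the $\KL$-convention requires $\beta_{k_0}(s,\cdot)\in\L$; the first summand already gives the correct pointwise bound. A direct check confirms $\beta_{k_0}\in\KL$: for fixed $t$ it is a $\K$-function of $s$ (composition and sum of $\K$-functions), and for fixed $s>0$ it is continuous, strictly decreasing and tends to $0$. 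Since $(t-k_0)/M=l$ when $t=lM+k_0$, we recover $y(lM+k_0)\leq \tilde\beta(y(k_0),l)\leq \beta_{k_0}(y(k_0),lM+k_0)$, proving the first part.

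For the final claim, I would assemble the estimates for different residues modulo $M$. Any $k\in\{0,\ldots,(L+1)M-1\}$ has a unique decomposition $k=lM+k_0$ with $l\in\{0,\ldots,L\}$ and $k_0\in\{0,\ldots,M-1\}$. Setting
\begin{equation*}
\beta(s,t) \;:=\; \max_{k_0\in\{0,\ldots,M-1\}} \beta_{k_0}(s,t),
\end{equation*}
which is $\KL$ as a finite maximum of $\KL$-functions, and using monotonicity of each $\beta_{k_0}(\cdot,t)\in\K$ together with $y(k_0)\leq y_M^{\max}$, we get
\begin{equation*}
y(k) \;=\; y(lM+k_0) \;\leq\; \beta_{k_0}(y(k_0),k) \;\leq\; \beta(y_M^{\max},k),
\end{equation*}
as required. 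The only delicate point is step three, the passage from the discrete index $l$ to the continuous $\KL$-variable $t$ while remaining inside the class $\KL$; this is purely routine but is where one must be careful with the convention (strict vs.\ non-strict monotonicity in the second argument), hence the small exponential correction. Step two is where all the real work is hidden, and relies on a standard result for which I would simply cite, e.g., Sontag's $\KL$-lemma.
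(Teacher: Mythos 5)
Your proposal is correct, but it takes a more modular route than the paper. The paper constructs $\beta_{k_0}$ explicitly as a piecewise-affine interpolation: on each interval $[lM+k_0,(l+1)M+k_0)$ it linearly interpolates between $\chi^l(s)$ and $\chi^{l+1}(s)$, and on the initial segment $[0,k_0)$ it interpolates between a $\chi^{-1}(s)$-weighted value and $s$; strict monotonicity in the time argument then comes for free from $\chi^{l}>\chi^{l+1}$ and $\chi^{-1}>\id$, with no exponential correction needed. You instead factor the argument into two independent steps: first invoke a standard ``$\KL$-majorization of iterates'' lemma to get $\tilde\beta\in\KL$ with $\chi^l(s)\le\tilde\beta(s,l)$, and then reparametrize the discrete index $l$ as $(t-k_0)/M$, adding $\tilde\beta(s,0)e^{-t}$ to patch up strict decrease on $[0,k_0)$ where $\max\{0,(t-k_0)/M\}$ is constant. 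Both routes are sound: your version is shorter and separates concerns cleanly (all the hard work is delegated to the cited iteration-to-$\KL$ lemma, which is indeed standard and essentially what the paper itself credits to the Jiang--Wang construction), while the paper's version is self-contained and yields the sharper pointwise identity $\beta_{k_0}(s,lM+k_0)=\chi^l(s)$ rather than just an upper bound. The assembly of the different residues $k_0$ into a single $\beta$ via a finite maximum is identical in both. One cosmetic point: you should confirm, as you do, that the cited lemma produces a $\tilde\beta$ depending only on $\chi$ (not on $y,M,k_0$), since this uniformity is used downstream in Theorem~\ref{theo:ftLF-ISS}; the paper achieves the same uniformity automatically because its explicit formula depends only on $\chi$, $M$, and $k_0$.
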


\begin{proof}
Let $M \in \N\backslash\{0\}$, $L \in \N \cup \{\infty\}$ and $k_0 \in \{0, \ldots, M-1\}$.
From~\eqref{eq:lemcomparison} and the monotonicity property of $\chi \in \Ki$, we obtain
\begin{equation*}
y\left( (l+1)M+k_0 \right) \leq \chi \left(y(lM+k_0) \right) \leq \ldots \leq \chi^{l+1}(y(k_0))
\end{equation*}
for all $l \in \{0, \ldots, L\}$.
Note that since $\chi<\id$ we have $\chi^l > \chi^{l+1}$, and  $\chi^l(s) \to 0$ as $l \to \infty$ for any $s \in \R_+$.
Define $t_{k_0,l}:=lM+k_0$ and $t_{k_0,l}^+:= (l+1)M+k_0$ for all $l \in \N$. 
Let $\beta_{k_0}: \R_+ \times \R_+ \to \R_+$ be defined by
\begin{equation*}
\beta_{k_0}(s,r) :=
\left\{ \begin{array}{rl} 
 \tfrac{1}{M} \left ( (t_{k_0,0}-r) \chi^{-1}(s) + (r+M-k_0) \id(s) \right ) & r \in [0, t_{k_0,0}), \, s\geq 0 \\ 
 \tfrac{1}{M} \left ( (t_{k_0,l}^+-r) \chi^l(s) + (r-t_{k_0,l}) \chi^{l+1}(s) \right )   & r \in [t_{k_0,l},t_{k_0,l}^+), \, s\geq 0. \end{array} \right. 
\end{equation*}
 Note that this construction is similar to the one proposed in \cite[Lemma~4.3]{JW02}.  
Clearly, $\beta_{k_0}$ is continuous and $\beta_{k_0}(\cdot, r)$ is a $\K$-function for any fixed $r \geq 0$.  
On the other hand, for any fixed $s \geq 0$, $\beta_{k_0}(s, \cdot)$ is an $\L$-function, as it is linear affine on any interval $[t_{k_0,l}, t_{k_0,l}^+]$ and strictly decreasing by
\begin{equation*}
\beta_{k_0}(s, t_{k_0,l}) = \chi^l(s)>\chi^{l+1}(s) = \beta_{k_0}(s, t_{k_0,l}^+).
\end{equation*}
Hence, $\beta_{k_0} \in \KL$.
Moreover, for all $l \in \{0, \ldots, L\}$ we have
\begin{equation*}
y(lM+k_0) \leq \chi^l (y(k_0)) = \beta_{k_0} \left( y(k_0), lM+k_0 \right),
\end{equation*}
which shows the first assertion of the lemma.

Now let~\eqref{eq:lemcomparison} be satisfied for all $k_0 \in \{0, \ldots, M-1\}$.
Define
\begin{equation*}
\beta(s,r) := \max_{k_0\in \{0, \ldots, M-1\}} \beta_{k_0}(s,r),
\end{equation*}
which is again a function of class $\KL$.
For any $k \in \{0, \ldots, (L+1)M-1\}$ there exist unique $l \in \{0, \ldots, L\}$ and $k_0 \in \{0, \ldots, M-1\}$ such that $k=lM+k_0$, and we have
\begin{equation*}
y(k) = y(lM+k_0) \leq \chi^l(y(k_0)) = \beta_{k_0}(y(k_0),lM+k_0) \leq \beta_{k_0}(y_M^{\max}, k) \leq \beta(y_M^{\max},k)
\end{equation*}
with $y_M^{\max}:= \max \{ y(0), \ldots, y(M-1)\}$.
This concludes the proof.
\end{proof}

If the function $\chi$ in Lemma~\ref{lem:comparison} is linear, then the $\KL$-function $\beta$ has a simpler form as we will see in the next lemma.

\begin{lemma}\label{lem:comparisonexp}
Let the assumptions of Lemma~\ref{lem:comparison} be satisfied for all $k_0 \in \{0, \ldots, M-1\}$ with $\chi(s) = \theta s$ and $\theta \in (0,1)$. 
Let $y_M^{\max}:= \max \{y(0), \ldots, y(M-1)\}$, then for all $k \in \{0, \ldots, (L+1)M-1\}$ we have
\begin{equation*}
y(k) \leq  \frac{y_M^{\max}}{\theta} \theta^{k/M}.
\end{equation*}
\end{lemma}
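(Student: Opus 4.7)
The plan is to reduce the statement to direct iteration of the linear recursion along each residue class modulo $M$, and then pass from the integer exponent $l$ to the fractional exponent $k/M$ by monotonicity of $\theta^{(\cdot)}$.

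First, I would fix $k_0 \in \{0,\ldots,M-1\}$ and apply the linear decrease~\eqref{eq:lemcomparison} with $\chi(s)=\theta s$ iteratively to obtain
\begin{equation*}
y(lM+k_0)\le \theta^{l}\, y(k_0)\quad\text{for all }l\in\{0,\ldots,L\}.
\end{equation*}
This is the analogue of the step $y(lM+k_0)\le \chi^l(y(k_0))$ appearing in the proof of Lemma~\ref{lem:comparison}, specialized to $\chi^l(s)=\theta^l s$, and it needs no further argument beyond induction on $l$.

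Next, given any $k\in\{0,\ldots,(L+1)M-1\}$, I would write $k=lM+k_0$ with the unique $l\in\{0,\ldots,L\}$ and $k_0\in\{0,\ldots,M-1\}$, so that $y(k)\le \theta^l\, y(k_0)\le \theta^l\, y_M^{\max}$. The remaining step is to show $\theta^l\le \theta^{k/M}/\theta$. Since $k=lM+k_0\le lM+M-1<(l+1)M$, we have $k/M<l+1$; as $\theta\in(0,1)$, the map $t\mapsto\theta^t$ is strictly decreasing, hence $\theta^{k/M}>\theta^{l+1}=\theta\cdot\theta^{l}$, i.e., $\theta^l<\theta^{k/M}/\theta$. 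Combining this with the previous estimate yields
\begin{equation*}
y(k)\le \theta^l\, y_M^{\max}\le \frac{y_M^{\max}}{\theta}\,\theta^{k/M},
\end{equation*}
which is the claimed bound.

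There is essentially no obstacle here: the content is just the observation that in the linear case the piecewise affine envelope $\beta_{k_0}$ constructed in the proof of Lemma~\ref{lem:comparison} may be replaced by the cleaner exponential bound $\theta^{k/M}/\theta$. The only thing to be careful about is the factor $1/\theta$, which appears because $k/M$ can exceed $l$ by almost one, so one must pay one extra power of $\theta$ when converting $\theta^l$ to $\theta^{k/M}$.
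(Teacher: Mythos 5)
Your proof is correct and follows essentially the same route as the paper: iterate the linear decrease along each residue class to get $y(lM+k_0)\le\theta^l y(k_0)$, bound $y(k_0)$ by $y_M^{\max}$, and use $l>k/M-1$ with the monotone decrease of $\theta^{(\cdot)}$ to convert $\theta^l$ into $\theta^{k/M-1}=\theta^{k/M}/\theta$. You have merely spelled out the final inequality a bit more explicitly than the paper does.
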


\begin{proof} 
A direct computation yields that for any $k_0 \in \{0, \ldots, M-1\}$, and any $l \in \{0, \ldots, L\}$  we have $ y(lM+k_0) \leq \chi(y((l-1)M+k_0)) \leq \chi^l(y(k_0)) = \theta^l y(k_0)$. 
Hence, for any $k = lM+k_0\leq (L+1)M-1$, with $l \in\{0, \ldots, L\}$ and $k_0 \in \{0, \ldots, M-1\}$ we have
\begin{equation*}
y(k) \leq \max_{k_0 \in \{0, \ldots, M-1\}} \{y(k_0) \theta^l\} \leq y_M^{\max} \theta^{k/M-1}.
\end{equation*}
This proves the lemma.
\end{proof}

\subsection{Bounds on trajectories}\label{subsec:appendixbounds}

As noted in Remark~\ref{rem:omega} the requirement on the existence of $\K$-functions $\omega_1,\omega_2$ satisfying \eqref{eq:omega} in Assumption~\ref{ass:omega} is a necessary condition for system~\eqref{eq:oas} to be ISS. 
The following lemma states that
under the assumption of global $\K$-boundedness any trajectory of system~\eqref{eq:oas} has a global $\K$-bound for any time step. 
This result is needed in Theorem~\ref{theo:ftLF-ISS} to show that the existence of a dissipative finite-step ISS Lyapunov function implies ISS of system~\eqref{eq:oas}.

\begin{lemma}\label{lem:bounds} 
Let system~\eqref{eq:oas} satisfy Assumption~\ref{ass:omega}. 
Then for any $j\in \N$  there exist $\K$-functions $\vartheta_j, \zeta_j$ such that for all $\xi \in \R^n$, and all $u(\cdot) \in l^\infty (\R^m)$ we have
\begin{equation}\label{eq:bounds}
\dabs{x(j, \xi, u(\cdot))} \leq \vartheta_j(\dabs{\xi}) + \zeta_j(\dabsu{u}).
\end{equation}
\end{lemma}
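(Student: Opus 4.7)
The plan is to proceed by induction on $j \in \N$, propagating the global $\K$-bound \eqref{eq:omega} from step $j$ to step $j+1$.

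For the base case $j=0$, we have $x(0,\xi,u(\cdot)) = \xi$, so it suffices to set $\vartheta_0(s) := s$ and $\zeta_0(s) := s$; both lie in $\K$ and the estimate is trivially satisfied.

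For the inductive step, suppose $\vartheta_j, \zeta_j \in \K$ satisfy \eqref{eq:bounds} for some $j \in \N$. Using $x(j+1,\xi,u(\cdot)) = G(x(j,\xi,u(\cdot)), u(j))$, Assumption~\ref{ass:omega}, the monotonicity of $\omega_1$, and the elementary inequality $\omega_1(a+b) \leq \omega_1(2a)+\omega_1(2b)$ valid for any $\omega_1 \in \K$ and $a,b \geq 0$, I would estimate
\begin{align*}
\dabs{x(j+1,\xi,u(\cdot))} &\leq \omega_1(\dabs{x(j,\xi,u(\cdot))}) + \omega_2(\dabs{u(j)}) \\
&\leq \omega_1\bigl(\vartheta_j(\dabs{\xi}) + \zeta_j(\dabsu{u})\bigr) + \omega_2(\dabsu{u}) \\
&\leq \omega_1\bigl(2\vartheta_j(\dabs{\xi})\bigr) + \omega_1\bigl(2\zeta_j(\dabsu{u})\bigr) + \omega_2(\dabsu{u}).
\end{align*}
Hence it is natural to define
\begin{equation*}
\vartheta_{j+1}(s) := \omega_1(2\vartheta_j(s)), \qquad \zeta_{j+1}(s) := \omega_1(2\zeta_j(s)) + \omega_2(s),
\end{equation*}
which are again of class $\K$ as compositions and sums of $\K$-functions, and which yield \eqref{eq:bounds} at step $j+1$. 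This closes the induction.

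There is no real obstacle here; the only delicate point is that the inductive bound must remain a sum of two $\K$-functions of $\dabs{\xi}$ and $\dabsu{u}$ separately, which is secured precisely by the splitting trick $\omega_1(a+b)\leq \omega_1(2a)+\omega_1(2b)$. Note that since $\omega_1$ is not assumed sub-additive, this doubling of the arguments is necessary, and it explains why the functions $\vartheta_j,\zeta_j$ generally grow with $j$.
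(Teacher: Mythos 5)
Your proof is correct and follows essentially the same inductive argument as the paper, including the same key step of splitting $\omega_1(a+b)\leq\omega_1(2a)+\omega_1(2b)$ and defining $\vartheta_{j+1}=\omega_1\circ 2\vartheta_j$, $\zeta_{j+1}=\omega_1\circ 2\zeta_j+\omega_2$. The only cosmetic difference is that the paper takes $\zeta_0$ arbitrary (since at $j=0$ the input term is not needed), whereas you fix $\zeta_0=\id$; this does not affect the result.
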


Before we prove this lemma we note that, as shown in Remark~\ref{rem:omega}, any trajectory of an ISS system has a \emph{uniform} global $\K$-bound, i.e.,~\eqref{eq:bounds} is satisfied by taking 
$\vartheta_j(\cdot) = \beta(\cdot, 1)$ and  $\zeta_j(\cdot) = \gamma(\cdot)$.
On the other hand, if the system is not globally stable then we cannot find uniform global $\K$-bounds  upper bounding the functions $\vartheta_j\in\K$, $j \in \N$, and $\zeta_j\in\K$, $j \in \N$, in~\eqref{eq:bounds}. 

\begin{proof}
We prove the result by induction. Take any $\xi \in \R^n$ and any input $u(\cdot) \in l^\infty (\R^m)$. 
For $j=0$ we have $\dabs{x(0, \xi, u(\cdot))} = \dabs{\xi}$
satisfying~\eqref{eq:bounds} with $\vartheta_0=\id$ and arbitrary $\zeta_0 \in \K$.
For $j=1$ it follows by Assumption~\ref{ass:omega} that $\dabs{x(1, \xi, u(\cdot))} \leq \omega_1(\dabs{\xi}) + \omega_2(\dabsu{u})$. 
So we can take $\vartheta_1:= \omega_1$ and $\zeta_1:= \omega_2$.\\
Now assume that there exist $\vartheta_j, \zeta_j \in \K$ satisfying~\eqref{eq:bounds}  for some $j \in \N$. 
Then 
\begin{align*}
\dabs{x(j+1, \xi, u(\cdot))} & = \dabs{G(x(j, \xi, u(\cdot)), u(j))} 
\leq \omega_1(\dabs{x(j, \xi, u(\cdot))})+ \omega_2(\dabsu{u}) \\
& \hspace{-0.6cm}\leq \omega_1(\vartheta_j(\dabs{\xi}) + \zeta_j(\dabsu{u})) + \omega_2(\dabsu{u}) 
\\ & \hspace{-0.6cm} 
\leq \omega_1(2\vartheta_j(\dabs{\xi})) + \omega_1(2\zeta_j(\dabsu{u})) + \omega_2(\dabsu{u}) 
=: \vartheta_{j+1}(\dabs{\xi}) + \zeta_{j+1}(\dabsu{u}).
\end{align*}
By induction, the assertion holds for all $j \in \N$.
\hfill~\end{proof}

If the functions $\omega_1,\omega_2$ in~\eqref{eq:omega} are linear then the functions $\vartheta_j, \zeta_j$ in Lemma~\ref{lem:bounds} are also linear, and have an explicit construction in terms of $\omega_1,\omega_2$.

\begin{lemma}\label{lem:boundsexp} 
Let system~\eqref{eq:oas} satisfy Assumption~\ref{ass:omega} with linear functions $\omega_1(s):= w_1 s$ and $\omega_2(s) = w_2 s$, where $w_1,w_2>0$.
 Then~\eqref{eq:bounds} is satisfied with
$\vartheta_j (s) =  w_1^js$ and $\zeta_j= w_2 \sum_{i=0}^{j-1}w_1^{i} s$.
\end{lemma}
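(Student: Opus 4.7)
The plan is to mirror the inductive argument used for Lemma~\ref{lem:bounds}, but to exploit the linearity of $\omega_1$ and $\omega_2$ so that no ``factor of $2$'' subadditivity splitting is needed; the recursive bounds can then be carried through tightly and identified explicitly with the closed-form expressions $\vartheta_j(s)=w_1^j s$ and $\zeta_j(s)=w_2\sum_{i=0}^{j-1}w_1^i s$.

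First I would verify the base cases. For $j=0$ we have $|x(0,\xi,u(\cdot))|=|\xi|$ and the claimed bound reads $|\xi|\leq w_1^0|\xi| + w_2\cdot 0 = |\xi|$ (using the convention that the empty sum is zero), so \eqref{eq:bounds} holds trivially. For $j=1$, Assumption~\ref{ass:omega} with the linear choices of $\omega_1,\omega_2$ gives directly $|x(1,\xi,u(\cdot))|=|G(\xi,u(0))|\leq w_1|\xi|+w_2|u(0)|\leq w_1|\xi|+w_2\dabsu{u}$, matching $\vartheta_1(s)=w_1 s$ and $\zeta_1(s)=w_2 s$.

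Next I would do the inductive step. Assuming \eqref{eq:bounds} holds for some $j\in\N$ with $\vartheta_j(s)=w_1^j s$ and $\zeta_j(s)=w_2\sum_{i=0}^{j-1}w_1^i s$, I would compute
\begin{align*}
|x(j+1,\xi,u(\cdot))| &= |G(x(j,\xi,u(\cdot)),u(j))| \leq w_1|x(j,\xi,u(\cdot))| + w_2|u(j)| \\
&\leq w_1\bigl(w_1^j|\xi| + w_2{\textstyle\sum_{i=0}^{j-1}}w_1^i\dabsu{u}\bigr) + w_2\dabsu{u} \\
&= w_1^{j+1}|\xi| + w_2{\textstyle\sum_{i=1}^{j}}w_1^i\dabsu{u} + w_2\dabsu{u} \\
&= w_1^{j+1}|\xi| + w_2{\textstyle\sum_{i=0}^{j}}w_1^i\dabsu{u},
\end{align*}
which is precisely $\vartheta_{j+1}(|\xi|)+\zeta_{j+1}(\dabsu{u})$. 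Induction then concludes the proof for all $j\in\N$.

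There is no real obstacle here: the only reason Lemma~\ref{lem:bounds} needed the subadditivity estimate $\omega_1(a+b)\leq\omega_1(2a)+\omega_1(2b)$ was to keep the bound inside the class $\K$; in the present linear setting the splitting $w_1(a+b)=w_1 a+w_1 b$ is exact, so the recursion produces the geometric/partial-sum expressions directly. The closed-form for $\zeta_j$ arises from the geometric accumulation of $w_2\dabsu{u}$ through $j$ applications of $\omega_1$, which is why the index runs from $0$ to $j-1$.
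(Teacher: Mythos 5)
Your proof is correct and is essentially the same argument the paper invokes: the paper simply cites ``variation of constants,'' which for a discrete-time recursion $|x(j+1)|\le w_1|x(j)|+w_2\dabsu{u}$ is exactly the closed form $|x(j)|\le w_1^j|\xi|+w_2\sum_{i=0}^{j-1}w_1^i\dabsu{u}$ that your induction unrolls. You also correctly identify why the factor-of-$2$ splitting used in Lemma~\ref{lem:bounds} becomes unnecessary in the linear case.
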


\begin{proof} 
The proof follows using the variation of constants formula.
\hfill~\end{proof}

\begingroup
\small

\endgroup

\end{document}